\newcommand{\E}{{\mathbb{E}}}
\newtheorem{theorem}{Theorem}[section]
\newtheorem{lemma}[theorem]{Lemma}
\newtheorem{corollary}[theorem]{Corollary}
\newtheorem*{lemma*}{Lemma}
\newtheorem{proposition}[theorem]{Proposition}
\theoremstyle{remark}
\theoremstyle{definition}
\theoremstyle{corollary}
\newtheorem{definition}{Definition}[section]
\title{Exponential Absolute Minimizing extension and biased infinity Laplacian}
\author{Yang Chu}
\begin{document}

\maketitle

\begin{abstract}
     We study the variational structure of the biased infinity Laplacian by introducing a notion of the $\beta$\textit{-Exponential Absolute Minimizing Extension} ($\beta$--AM) on arbitrary length space, which absolutely minimizing the exponential slope
     $$
L^{\beta}_u (E) := \beta \sup_{x,y \in E} \frac{u(y) - e^{-\beta |x-y|} u(x)}{1- e^{-\beta |x-y|}}.
$$We also define the corresponding Exponential McShane-Whitney-type extension, and $\beta$-biased convexity, which equivalently characterize $\beta$-AM and may be of independent interest. These generalize the classical Absolute Minimizing Lipschitz Extension as a special case when $\beta = 0$. In Euclidean space with Euclidean norm, this corresponds to the Aronsson equation with Hamiltonian 
\[
H(u, \nabla u) = |\nabla u| + \beta u,
\]
equivalently viscosity solutions of $\Delta_{\infty}^{\beta} u = 0$. We show that $\beta$-AM arises as the continuum value of a biased tug-of-war game.
Analogous to the unbiased case, we derive various properties of this extension. As an application, we further show that the linear blow-up property holds for biased infinity harmonic functions.

\end{abstract}

\section{Introduction}

\subsection{Exponential Absolute Minimizing functions and main results}
The biased infinity Laplacian 
$$ 
\Delta_{\infty}^{\beta} u := \Delta_{\infty}^N u + \beta |\nabla u|
= \frac{\nabla u^T D^2 u \nabla u}{|\nabla u|^2} + \beta |\nabla u|
$$
and comparison with exponential cones are
introduced in \cite{peres2010biased} to characterize the continuum values of biased tug-of-war games on compact length spaces. We use $\Delta_{\infty}^N $ to denote the normalized 1-homogeneous infinity Laplacian. While the unbiased infinity Laplacian arises from calculus of variation in $L^{\infty}$ norm and more generally Absolutely Minimizing Lipschitz extension (AMLE), such variational structure is missing so far for the biased infinity Laplacian. We will show there is actually a rich variational structure in the biased case as well.

Without loss of generality we assume $\beta>0$ throughout the paper, as the reader can check the case for $\beta<0$ can be handled by fliping the sign of the boundary value and working with $-\beta$. We introduce a new notion of $\beta-$\textit{Exponential Absolute Minimizing extension}, on any length space (not necessarily compact) $X$, which generalizes the concept of Absolute Minimizing Lipschitz extension as a special case when $\beta \downarrow 0$. 

\begin{definition}[Exponential slope]\label{def_biased_slope}
    We define the biased $\beta$-slope of a function $u$ on metric space $E$ as:
$$
L^{\beta}_u (E) = \beta \sup_{x,y \in E} \frac{u(y) - e^{-\beta |x-y|} u(x)}{1- e^{-\beta |x-y|}} 
$$
Note that $L^{0} = \lim_{\beta \downarrow 0} L^{\beta} = \sup_{x,y \in E} \frac{u(y) -  u(x)}{|x-y|}$ reduce to the usual Lipschitz constant.

A minimal $\beta -$\textit{biased extension} on $X\setminus Y$ with given data $g$, i.e. a real valued function, on $Y$ with $\beta$-exponential constant $L^{\beta}$, is a function $u$ on $X$ such that the $\beta-$ exponential constant of $u$ on $X$ is at most $L^{\beta}$, i.e. $L^{\beta}_u (X) = L^{\beta}_u (Y) = L^{\beta}_g (Y)$.

The  \textbf{$\beta$ biased absolute minimal Lipschitz extension ($\beta$-AM)} is a minimal $\beta-$extension such that 
$L^{\beta}_u (U) =L^{\beta}_u (\partial U)$ for any open set $U$.

We shall suppress the notion of bias $\beta$ and write $L_u$, when there is no confusion.
\end{definition}

Roughly speaking, the Lipschitz extension interpolates the function such that it is linear in the direction of the gradient. Where in the $\beta$-extension, it interpolates the function exponentially, such that for $\beta >0$ (resp. $\beta <0$) it is concave (resp. convex) in the direction of gradient.

Here we define $\beta$-exponential cone and its comparison principle. Note that our $\beta$-exponential cone is equivalent to the one given in \cite{peres2010biased}, but the comparison principle is different from \cite{peres2010biased}. In particular, the comparison with exponential cone from above is only required for the positive cone, and from below is only required for the negative cone. The statement for comparison from above for both positive and negative is in fact not true as one can check in the proof. 
Also, we do not assume that the test open set $V$ to be bounded as in \cite{peres2010biased} which only deal with compact space. 
Also, the center $x_0$ of the cone is outside of the test set $V$, which is natural from the PDE point of view \cite{aronsson2004tour} and simplifies the proof. We shall show later that comparison with exponential cone from above is equivalent to viscosity sub-solution.
\begin{definition}[Exponential cones]\label{def_exp_cone}
    For $\beta >0$, given $A \geq 0$ and $x_0 \in X$ in a length space, we define

$$
\begin{aligned}
& C_{x_0}^{+}(x):=  A \frac{1}{\beta}\left(1-e^{-\beta |x-x_0|}\right) \geq 0 \\
& C_{x_0}^{-}(x):=  A \frac{1}{\beta} \left(1-e^{\beta |x-x_0|}\right) \leq 0
\end{aligned}
$$

For any $B,K \in \mathbb{R}$, the functions $C_{x_0}^{+}(x)+ K$ and $C_{x_0}^{-}(x) +K $ are called positive and negative $\beta$-exponential cones centered at $x_0$ with "slope" $A$. Note that when $\beta \to 0$, both positive and negative $\beta$-exponential cones reduces to the $A|x-x_0|+ B +K$ which corresponds to the usual cone function.
\end{definition}

\begin{definition}[Comparison with exponential cones]
    A function $u: X \backslash Y \longrightarrow \mathbb{R}$ satisfies comparison with $\beta$-exponential cones from above (CECA) if for any open set $V \subset X \backslash Y$,  $ x_0 \in X \setminus V$ and $B \in \mathbb{R}$, whenever $u(x) \leq C_{x_0}^{+}(x)+Be^{-\beta |x-x_0|} + K$ holds on $x \in \partial\left(V \backslash\left\{x_0\right\}\right)$, we also have $u \leq C_{x_0}^{+}+Be^{- \beta |x-x_0|} +K$ on the entire $V$. 
    
    Comparison with $\beta$-exponential cones from below (CECB) is defined analogously: in the same setting, whenever $u(x) \geq C_{x_0}^{-}(x)+Be^{\beta |x-x_0|} +K$ holds on $x \in \partial\left(V \backslash\left\{x_0\right\}\right)$, we also have $u \geq C_{x_0}^{-}+Be^{\beta |x-x_0|} +K $ on the entire $V$.  
    
    If both conditions hold, then $u$ satisfies comparison with $\beta$-exponential cones (CEC).

    In words, CECA means if the $u$ is upper bounded on the boundary of $V$ by a positive $\beta$-exponential cone with vertex not in $V$, it is also upper bounded on the entire $V$. CECB similarly means lower bound by the negative $\beta$-exponential cone.
\end{definition}

As we will show, the $\beta$-extension shares common features as the Lipschitz extension as thoroughly explained in \cite{aronsson2004tour}. Our presentation closely follows \cite{aronsson2004tour} and \cite{crandall2001optimal}, as the readers will find many similarities but very different behavior.

We need some definition before stating the main results. 

\begin{definition}[Strongly absolute minimizing and local Exponential slope]\label{def_local_slope}
    We define the local $\beta$-slope at $y$ as
    $$
         T^{\beta}_u(y) := \lim_{r \downarrow 0} L^{\beta}_u(B_r(y))
         = \inf \left\{{L}_u^{\beta}\left(B_r(y)\right): 0<r<\operatorname{dist}(y, Y)\right\}
         $$
    We shall suppress the superscript $\beta$ and write $T_u$ as there is no confusion.
     For $u \in C(U)$. Then $u$ is $\beta$ strongly absolutely minimizing ($u \in \beta\operatorname{-AMS}(U))$ if for every $V \subset \subset U$ and $v \in C(\bar{V})$ such that $u=v$ on $\partial V$, one has

$$
\sup _{x \in V} {~T}_u(x) \leq \sup _{x \in V} {~T}_v(x)
$$

\end{definition}

\begin{definition}[Exponential McShane-Whitney extension]\label{def_Mchshane_Whit}
    We define a special case of $\beta$-extension which is an analogy to the one of McShane \cite{mcshane1934extension} and Whitney \cite{Whitney34}. Thus We shall call this biased McShane-Whitney extension.
    Given any open set $U$, we define
    $$\begin{aligned} 
    & \Psi(g)(x):=\inf _{y \in Y}\left(  e^{-\beta |x-y|}g(y)
    + \frac{L_g(Y)}{\beta} (1-e^{-\beta |x-y|}))\right), \\ 
    & \Lambda(g)(x):=\sup _{y \in Y}
    \left( e^{\beta |x-y|}g(y)
    + \frac{L_g(Y)}{\beta} (1-e^{\beta |x-y|}))
    \right).
    \end{aligned}
    $$
    A function $u$ defined on $\Omega$ satisfies comparison with $ \Psi(u)$ from above if for any open set $U \subset \Omega$, $u \leq \Psi(u)$.
    Similarly $u$ satisfies comparison with $\Lambda (u)$ from below if for any open set $U \subset \Omega$, $u \geq \Lambda(u)$.

\end{definition}

Note that any unbiased infinity harmonic function $u$, defined on an open set $U$ and $y \in U$, is equivalently characterized by the folloiwing properties (see \cite{aronsson2004tour}):
\begin{align*}
g^{+}(r) &= \max_{|w - y| \le r} u(w) \quad \text{is convex for } 0 \le r < \mathrm{dist}(y, Y) \text{ and} \\
g^{-}(r) &= \min_{|w - y| \le r} u(w) \quad \text{is concave for } 0 \le r < \mathrm{dist}(y, Y).
\end{align*}

Motivated by this, we introduce the notion of $\beta$-convexity, associated with the $\beta$-AM, which may be of independent interest. Although the definition may appear somewhat intricate at first glance, its underlying intuition is straightforward: ordinary convexity asserts that
$$
f\left( (1-t) x_1+ t x_2 \right) \leq 
(1-t) f\left(x_1\right) 
+ t f\left(x_2\right)
$$

Note that the right hand side is the same as 
$$\E_{(1-t) x_1+ t x_2} f(B_\tau)$$
, where $B_s$ is a Brownian motion on the line segment from $x_1$ to $x_2$, and $\tau= \inf \{s>0: B_s = x_1 \text{ or } x_2 \}$. Now if we replace it with Brownian motion with drift $\beta/2$, where the drift is in the direction of $\arg \max_{x_1,x_2} f(x)$. I.e. when $\beta >0$, the drift is positive toward $arg \max_{x_1,x_2} f(x)$. When $\beta<0$, the drift is negative toward $arg \max_{x_1,x_2} f(x)$. One can easily check that the below definition (for $\beta$-convex) corresponds exactly to this generalization. This is not surprising, as the biased tug-of-war could be viewed as a game of controlling a Brownian motion with drift.

\begin{definition}[Biased convexity]
    For all $0 \leq t \leq 1$ and all $x_1, x_2 \in X$, we say a function $u$ on a vector space $X$ is $\beta$-convex if  :
$$
f\left((1-t) x_1+ t x_2\right) \leq 
\frac{ e^{-\beta t} - e^{-\beta} }{1-e^{-\beta}} f\left(x_1\right) 
+ \frac{ 1- e^{-\beta t} }{1-e^{-\beta}} f\left(x_2\right), \text{ if } f(x_1) < f(x_2)
$$
$$
f\left((1-t) x_1+ t x_2\right) \leq 
\frac{ e^{\beta t} - e^{\beta} }{1-e^{\beta}} f\left(x_1\right) 
+ \frac{ 1- e^{\beta t} }{1-e^{\beta}} f\left(x_2\right), \text{ if } f(x_1) \geq  f(x_2)
$$

Let $x_m = \arg \max_{x_1,x_2} f(x), x_M = \arg \max_{x_1,x_2} f(x), m = \min\{f\left(x_1\right),f(x_2)\}$, $M = \max \{f\left(x_1\right),f(x_2)\}$
In a single equation this gives,
\begin{equation}\label{beta_convex_original}
    f\left((1-t) x_m+ t x_M\right) \leq 
\frac{ e^{-\beta t} - e^{-\beta} }{1-e^{-\beta}} m 
+ \frac{ 1- e^{-\beta t} }{1-e^{-\beta}} M, 
\end{equation}

Equivalently, 

\begin{equation}\label{incr_beta_convex_m_M}
    f\left((1-t) x_1+t x_2\right) \leq \alpha(t) m+(1-\alpha(t)) M
\end{equation}

where $$\phi(t)=\frac{e^{-\beta t}-e^{-\beta}}{1-e^{-\beta}}, \quad \alpha(t)=\min \{\phi(t), \phi(1-t)\}=\phi(\max \{t, 1-t\})
$$

Similarly, we say a function is $\beta$-concave if
\begin{equation}\label{beta_concave_original}
    f\left((1-t) x_m+ t x_M\right) \geq 
\frac{ e^{\beta t} - e^{\beta} }{1-e^{\beta}} m 
+ \frac{ 1- e^{\beta t} }{1-e^{\beta}} M, 
\end{equation}
\end{definition}

When $\beta =0$, this reduces to the usual convexity/concavity. Also from definition, if $u$ is $\beta$-convex (resp. concave), $u$ is also $\beta^{\prime}$-convex (resp. concave) for any $\beta^{\prime} > \beta$. In particular, any convex function is $\beta$-convex for $\beta>0$. One can also check that $u$ is $\beta$-convex if and only if $-u$ is $\beta$-concave.

Now we're ready to state one main result of equivalent characterization of $\beta$-AM in the following theorem. 
Note that the assumption $L^{\beta}_g(Y) < \infty$ implies $\sup_{Y} g \leq \frac{L^{\beta}_g(Y)}{\beta} < \infty$, i.e. $g$ is bounded above, and $g$ is locally Lipschitz.

We first state everything in compact length space for completeness.

\begin{theorem}\label{main}
    Given a compact length space $X$, $Y \subset X$ and a function $g$ on $Y$ such that $L^{\beta}:= L^{\beta}_g(Y) < \infty$, then there exists an unique $\beta-$AMLE, and
    the following are equivalent: 
    \begin{enumerate}
        \item $u$ is an absolute minimal $\beta$-extension. \label{thm_ambe}
        \item $u$ satisfies comparison with $ \Psi(u)$ from above, and $\Lambda (u)$ from below. \label{thm_comparison_whitt}
        \item $u$ satisfies comparison with exponential cones. \label{thm_comparison_cones}
        \item $u$ is strongly absolute minimizing. \label{thm_stronglyAM}
        \item $u$ is the continuum value of the biased tug-of-war. \label{thm_tug_of_war}
        \medskip
        
        The next two conditions are also equivalent to the above and do not involve the test open set:
        \item $ \forall  y \in X\setminus Y $ and $0 \le r < \mathrm{dist}(y, Y)$, we have
\[
\begin{array}{ll}
(i) & g^{+}(r) = \displaystyle \max_{|w - y| \le r} u(w) \quad \textit{is $\beta$-convex, }  \textit{ and} \\[1.2em]
(ii) & g^{-}(r) = \displaystyle \min_{|w - y| \le r} u(w) \quad \textit{is $\beta$-concave}.
\end{array}
\] \label{thm_convexity}
        \item $ \forall  y \in X\setminus Y, $ and $|x-y|<r< \mathrm{dist}(y, Y)$ we have
        \[
\begin{array}{ll}
(i) & u(x) \leq  e^{-\beta |x-y|}u(y) + \max _{\{w:|w-y|=r\}}\left( 
\frac{u(w)- e^{-\beta r}u(y)}{1- e^{-\beta r}}
\right)  \left({1- e^{-\beta |x-y|}}\right) \textit{ and} \\[1.2em]
(ii) & u(x) \geq  e^{\beta |x-y|}u(y)+
\max _{\{w:|w-y|=r\}}\left( 
\frac{u(w)- e^{\beta r}u(y)}{1- e^{\beta r}}
\right)  \left({1- e^{\beta |x-y|}}\right)
\end{array}
\] \label{thm_slope_estimate}

    \bigskip

    Moreover, if $X$ is a compact subset of $\mathbb{R}^n$, the followings are also equivalent, together with the above.
        \item $u$ is a viscosity solution of 
        \begin{equation} \label{biased_infty_lap}
                    \Delta_{\infty}^N u + \beta |\nabla u|=0
        \end{equation} \label{thm_biased_infty_lap}
        \item $u$ minimizes the following $L^{\infty}$ functional 
        \begin{equation}\label{mini_functional}
        \| |\nabla u| + \beta u \|_{L^\infty},
        \end{equation}
        which could correspond \cref{biased_infty_lap} to be its Euler-Aronsson equation. \label{thm_variational}
        \item $u$ satisfies the following mean value property in the viscosity sense: 
        $$
        u(x) = \frac{1+ {\beta} \epsilon/2}{2} \sup_{y\in B_{\epsilon}(x)} u (y)
        + \frac{1- {\beta} \epsilon/2}{2} \inf_{y\in B_{\epsilon}(x)} u (y) + o(\epsilon^2)
        $$ \label{thm_mean_value}
    \end{enumerate}
\end{theorem}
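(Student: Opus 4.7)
The strategy is to prove equivalence among the abstract conditions (1)--(7), which hold on any length space, via a cycle of implications, and then bring in the Euclidean-only conditions (8)--(10) by standard viscosity-theoretic arguments. Throughout I would mirror Crandall's treatment of the classical AMLE case, replacing linear cones by the exponential cones $C_{x_0}^{\pm} + Be^{\mp\beta|x-x_0|}+K$. A point to keep in mind is the asymmetry between the bounded, concave $C^{+}$ and the unbounded, convex $C^{-}$: the map $u\mapsto -u$ does not swap CECA with CECB, so every upper/lower pair of statements must be proved separately rather than by a sign flip.

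For the core cycle I would take (1)$\Rightarrow$(3)$\Rightarrow$(2)$\Rightarrow$(1). If $u$ is $\beta$-AM and CECA fails on an open set $V$ against a cone $C_{x_0}^{+}(\cdot)+Be^{-\beta|\cdot-x_0|}+K$, replacing $u$ by its pointwise minimum with the cone on $\{u>\text{cone}\}$ yields a competitor with strictly smaller exponential slope, contradicting minimality; the essential computation is that two positive exponential cones sharing a vertex concatenate into an admissible cone of no larger slope, the biased replacement for the linear cone arithmetic. Since $\Psi(u)$ is an infimum of upper cones with vertices on $Y$, CECA immediately gives $u\leq \Psi(u)$, and CECB symmetrically gives $u\geq \Lambda(u)$. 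For (2)$\Rightarrow$(1), one observes that $\Psi(u)$ and $\Lambda(u)$ are themselves minimal extensions with slope $L^{\beta}$, so sandwiching forces $L_u^{\beta}(U)=L_u^{\beta}(\partial U)$ on every $U$.

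The pointwise conditions (6), (7) reduce directly to (3): applying CECA on $B_r(y)\setminus\{y\}$ with $B$ chosen so the cone is tangent from above on $\partial B_r(y)$ gives (7)(i), and (7)(ii) mirrors with CECB. The biased convexity in (6) is calibrated precisely so that the exponential cones are the $\beta$-affine functions, and (6) is then (7) read radially along segments through $y$. Strong absolute minimality (4) follows from (1) via a covering argument and the pointwise bound $T_u^{\beta}(x)\leq L^{\beta}$ extracted from cone comparison. The continuum tug-of-war value (5) requires the biased dynamic programming principle of Peres--Sheffield, passage to the continuum limit of the $\epsilon$-step game, and identification of the limit via CECA and CECB together with the uniqueness of the $\beta$-AM furnished by the Whitney comparison. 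In the Euclidean setting, (3)$\Leftrightarrow$(8) follows from the standard equivalence of cone comparison with viscosity sub/supersolution by testing $\Delta_{\infty}^N + \beta|\nabla\cdot|$ directly on the exponential cones, which are themselves $C^{2}$ away from $x_0$; (9) is then the definition of the Aronsson equation for $H(u,p)=|p|+\beta u$, and (10) is obtained by a second-order Taylor expansion of the mean-value operator matched against the equation.

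The main obstacles I anticipate are the concatenation inequality for exponential cones needed in (1)$\Rightarrow$(3), which must be verified twice (for upper and lower cones) and hinges on the convexity of $t\mapsto 1-e^{-\beta t}$ combined with the length-space triangle inequality, and the tug-of-war identification in (5), where the drift contributes at leading order in the $\epsilon\downarrow 0$ limit so the unbiased theory cannot be invoked as a black box. A secondary but persistent bookkeeping issue will be the separate treatment of CECA and CECB at every stage, since the exponential weights appearing in $\Psi$ and $\Lambda$ differ in sign and the cones $C^{\pm}$ are not related by negation.
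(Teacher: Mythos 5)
Your overall architecture -- mirroring the classical AMLE treatment, cycling among the abstract conditions, separating CECA from CECB at every stage, and importing the tug-of-war and viscosity pieces from the biased literature -- matches the paper's strategy in broad strokes, and your note that $u\mapsto -u$ does not swap CECA and CECB is exactly the right caution. However, two of your key implication steps have genuine gaps as stated.

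First, for $(1)\Rightarrow(3)$ you propose truncating $u$ against the cone on $\{u>\text{cone}\}$ and claim the competitor has \emph{strictly smaller} exponential slope, contradicting minimality. This is not how the contradiction arises. The $\beta$-AM condition only gives $L^{\beta}_u(W)=L^{\beta}_u(\partial W)$, and on the open component $W\subset V$ where $u$ strictly exceeds the cone $C$, one has $u=C$ on $\partial W$ and hence $L^{\beta}_u(W)=L^{\beta}_C(\partial W)=a$; the slopes are \emph{equal}, not strictly ordered. What closes the argument is a rigidity statement: if $u=C$ on $\partial W$, $L^\beta_u(W)=|a|$, and the vertex of $C$ lies outside $W$, then $u\equiv C$ on $W$ (the paper's \cref{equality_cone_on_boundary}, whose proof uses two boundary points collinear with the vertex through an interior point). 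You would need to prove this exponential-cone rigidity separately; without it the truncation argument does not produce a contradiction.

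Second, the $(2)\Rightarrow(1)$ step is incorrect as stated. That $\Lambda(u|_{\partial U})\le u\le\Psi(u|_{\partial U})$ and that $\Psi,\Lambda$ both have $\beta$-slope $L^\beta_u(\partial U)$ does not force $u$ to have slope $L^\beta_u(\partial U)$ on $U$: for interior points $x,y$ the relevant quantity $u(y)-e^{-\beta|x-y|}u(x)$ is only bounded by $\Psi(y)-e^{-\beta|x-y|}\Lambda(x)$, and since $\Psi\ge\Lambda$ this bound goes the wrong way; a function oscillating between two fixed envelopes of slope $L$ can easily have larger slope. The paper instead proves $(3)\Rightarrow(1)$ directly, by applying comparison with positive and negative cones to show $L^\beta_u(\partial U)=L^\beta_u(\partial U\cup\{x\})$ for any interior $x$, then iterating to cover pairs $x,y$. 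If you want to route through $(2)$, you need the reverse implication $(2)\Rightarrow(3)$ (pushing a general cone with vertex outside $V$ onto the $\Psi$ envelope via the annulus argument), and then go $(3)\Rightarrow(1)$ -- the sandwich alone does not suffice.

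A smaller issue: your passage from $(1)$ to strong absolute minimality $(4)$ via a ``covering argument'' hides the real content, which in the paper is the increasing-slope chain construction (iterating \cref{lemma_increasing_slope} along a polygonal path of nearly-maximal points toward $\partial V$) showing that $T_u(x^0)$ is transported to the boundary and must be matched by any competitor; this is not a covering argument but a propagation argument, and it is where the $\beta$-slope's monotonicity in the radius is essential.
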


In the general case, not all of the above theorem holds. 

\begin{theorem}\label{main_general}
    Given a length space $X$, $Y \subset X$ and a function $g$ on $Y$ such that $L^{\beta}:= L^{\beta}_g(Y) < \infty$ and additional $g$ is bounded below, then there exists an unique $\beta-$AMLE, and
    the following are equivalent: 
    \begin{enumerate}
        \item $u$ is an absolute minimal $\beta$-extension. \label{thm_ambe_gene}
        \item $u$ satisfies comparison with $ \Psi(u)$ from above, and $\Lambda (u)$ from below. \label{thm_comparison_whitt_gene}
        \item $u$ satisfies comparison with exponential cones. \label{thm_comparison_cones_gene}
        \item $u$ is strongly absolute minimizing. \label{thm_stronglyAM_gene}
        \item $u$ is the continuum value of the biased tug-of-war. \label{thm_tug_of_war_gene}
        \medskip
        
    \bigskip 
    
    Moreover, if $X$ is a subset of $\mathbb{R}^n$, the following is also equivalent together with the above.
        \item $u$ is the smallest viscosity solution of 
        \begin{equation} \label{biased_infty_lap}
                    \Delta_{\infty}^N u + \beta |\nabla u|=0
        \end{equation} \label{thm_biased_infty_lap_gene}
    \end{enumerate}
\end{theorem}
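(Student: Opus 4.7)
The overall strategy is to reduce Theorem~\ref{main_general} to Theorem~\ref{main} by exploiting the local character of conditions \eqref{thm_ambe_gene}--\eqref{thm_stronglyAM_gene} and using an exhaustion argument, while using the exponential decay from $\beta>0$ together with the bounded-below hypothesis on $g$ to control behavior at infinity. Each of the first four conditions is phrased via quantification over open test sets $V$, and for any such $V$ with compact closure $\overline{V} \subset X$ one applies Theorem~\ref{main} directly to $\overline{V}$ viewed as a compact length space, with boundary data $u|_{\partial V}$ (whose exponential slope is at most $L^\beta$ by locality). Exhausting $X$ by bounded sets $V_n \uparrow X$ then propagates the four-way equivalence $\eqref{thm_ambe_gene} \Leftrightarrow \eqref{thm_comparison_whitt_gene} \Leftrightarrow \eqref{thm_comparison_cones_gene} \Leftrightarrow \eqref{thm_stronglyAM_gene}$ to all of $X$.

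For existence, I would take $u$ to be the continuum value of the biased tug-of-war game on $X$ with absorbing set $Y$ and terminal payoff $g$. The bias $\beta>0$ supplies an exponential discount that replaces compactness in the arguments of \cite{peres2010biased}, and the assumption that $g$ is bounded below makes the expected payoff well-defined and integrable, so the game value exists. The cone envelopes give $\Lambda(g) \le u \le \Psi(g)$ where finite, and $u$ satisfies CEC by a dynamic-programming / optional stopping argument; combined with the local equivalences this yields all properties \eqref{thm_ambe_gene}--\eqref{thm_tug_of_war_gene}. Uniqueness then follows because any two candidate $\beta$-AM extensions with the same boundary data must agree on every bounded subdomain by the uniqueness clause of Theorem~\ref{main}, hence globally.

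For the PDE characterization \eqref{thm_biased_infty_lap_gene} with $X \subset \mathbb{R}^n$, the equivalence CEC $\Leftrightarrow$ viscosity solution of $\Delta^\beta_\infty u = 0$ is a local statement and transfers immediately from Theorem~\ref{main}, applied on bounded subdomains. The present theorem asserts only that $u$ is the \emph{smallest} viscosity solution rather than the unique one, because non-compactness admits extraneous solutions that differ at infinity (for instance, one can modify a known solution by a positive exponential cone centered far from $Y$). Minimality is obtained by Perron's method: if $v$ is any viscosity solution with $v|_Y = g$, then $v$ is $\beta$-AM on each bounded subdomain, so $v \ge u$ pointwise since the tug-of-war value is the least such super-solution compatible with the data.

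The main obstacle I expect is in the existence step: controlling the biased tug-of-war on an unbounded length space requires showing that the game terminates almost surely (or that the contribution from non-terminating trajectories is negligible), that the value function is continuous, and that it satisfies the dynamic programming principle globally. The exponential discount from $\beta>0$ is essential here and replaces the compactness used in Theorem~\ref{main}; the lower-boundedness of $g$ is precisely what ensures the expected payoff is integrable from below and prevents the value from degenerating to $-\infty$ at points far from $Y$. A secondary subtlety is that general length spaces need not be locally compact, so Step~1 is implicitly carried out on subdomains with compact closure and one uses continuity of $u$ to pass to the closure.
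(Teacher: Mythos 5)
Your proposal has a fundamental gap in the exhaustion/localization step, and it directly contradicts what the paper emphasizes. You argue that each of conditions (1)--(4) can be verified by restricting to test sets $V$ with compact closure and then exhausting $X$ by bounded sets $V_n \uparrow X$. But the definitions of $\beta$-AM and CEC in this paper quantify over \emph{arbitrary} open subsets of $X \setminus Y$, which may be unbounded; the author explicitly states that, unlike \cite{peres2010biased}, the test set $V$ is not assumed bounded. The entire point of the remark following Theorem~\ref{main_general} is that the ``local'' characterizations (conditions (6)--(7) of Theorem~\ref{main}, the $\beta$-convexity of $g^\pm$) do \emph{not} imply the absolute minimizing property on unbounded test sets, which is why they are dropped from Theorem~\ref{main_general}. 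If your exhaustion argument were valid, those local conditions would automatically carry over as well, contradicting the paper. Concretely, knowing $L_u^\beta(V_n) = L_u^\beta(\partial V_n)$ for every bounded $V_n$ does not control $L_u^\beta(V)$ for an unbounded $V$, because $\partial V_n$ includes interior points of $V$ where $u$ is not prescribed. Your uniqueness argument suffers the same flaw: two candidate $\beta$-AM extensions agree on $Y$, not on $\partial V_n$, so Theorem~\ref{main}'s uniqueness clause cannot be invoked on bounded subdomains.

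Your existence step is also not quite right: you take $u$ to be ``the continuum value of the biased tug-of-war,'' asserting the game value exists because of the exponential discount. But in the non-compact case the game need \emph{not} have a value---player II may be unable to force termination, so $u_{II}^\varepsilon = +\infty$ is possible. The paper's construction (Theorem~\ref{u_I_theorem} and the proof of Theorem~\ref{main_general}) uses the lower value $u_I^\varepsilon$ for player~I specifically, and the essential new technical content is a martingale argument showing $u_I^\varepsilon$ satisfies comparison with exponential cones from \emph{below}, exploiting the fact that player~I (not II) can always force termination by pulling toward $Y$. The limit $\tilde u = \lim u_I^{\varepsilon/2^n}$ then satisfies CEC, and the II-favored-game estimates show $v^\varepsilon \leq u$ for any $u \in \mathrm{CECA}$, giving minimality. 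Neither the ``DPP/optional stopping'' outline nor the Perron sketch in your proposal engages with this asymmetry between the two players, which is where the bounded-below hypothesis on $g$ actually does its work.
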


In the general case, the properties without test sets are no longer valid. Roughly speaking, those are local properties and do not imply the absolute minimizing property on unbounded test sets. This confirms a observation and resolves an open problem in \cite{peres2010biased} section 7.

The existence of $\beta$-AM in general setting is established through biased tug-of-war which we introduce in the next subsection.

\subsection{Biased tug-of-war on graphs}

1.2. Random turn games and values. We consider two-player, zero-sum randomturn games, which are defined by the following parameters: a set $X$ of states of the game, two directed transition graphs $E_{\mathrm{I}}, E_{\mathrm{II}}$ with vertex set $X$, a nonempty set $Y \subset X$ of terminal states (a.k.a. absorbing states), a terminal payoff function $g: Y \rightarrow \mathbb{R}$, a running payoff function $f: X \backslash Y \rightarrow \mathbb{R}$, and an initial state $x_0 \in X$.

Game play is as follows: a token is initially placed at position $x_0$. At the $k^{\text {th }}$ step of the game, a coin is tossed where Player I wins with probability $\frac{\rho}{\rho +1}$ and $\frac{1}{\rho +1}$ for Player II, and the player who wins the toss may move the token to any $x_k$ for which $\left(x_{k-1}, x_k\right)$ is a directed edge in her transition graph. The game ends the first time $x_k \in Y$, and player I's payoff is $g\left(x_k\right)+\sum_{i=0}^{k-1} f\left(x_i\right)$. Player I seeks to maximize this payoff, and since the game is zero-sum, player II seeks to minimize it. 

We will use the term tug-of-war (on the graph with edges $E$ ) to describe the game in which $E:=E_{\mathrm{I}}=E_{\mathrm{II}}$ (i.e., players have identical move options) and $E$ is undirected (i.e., all moves are reversible). We now define the value functions $u_I$ for player I, and $u_{I I}$ for player II. A strategy for a player is a way of choosing the player's next move as a function of all previous coin tosses and moves. Given two strategies, $S_I$ for player I and $S_{II}$ for player II, we define the payoff functions 
$F_{+}(S_I, S_{II})$ and $F_{-}(S_I, S_{II})$ as the expected payoff at the end of the game if the game terminates 
with probability one under these strategies. So, in this case they are equal. However, we let 
$F_{-}(S_I, S_{II}) = -\infty$ and $F_{+}(S_I, S_{II}) = \infty$ otherwise. 
We think of $u_I^{}$ as the minimum that player I can guarantee being paid; on the other hand, 
player II can avoid having to pay more than the value $u_{II}^{}$. More precisely,

\[
u_I^{} := \sup_{S_I} \inf_{S_{II}} F_{-}(S_I, S_{II}),
\]
\[
u_{II}^{} := \inf_{S_{II}} \sup_{S_I} F_{+}(S_I, S_{II}).
\]

Note that in the biased case, player I can always force the game to end almost surely by targeting a fixed point $y\in Y$. But player II may not be able to force the game to end almost surely, then $u_{\text {II }}=\infty$. If $X$ is compact, player II could indeed force the game to end almost surely as shown in \cite{peres2010biased}.

From the definition we see $u_{\mathrm{I}}(x) \leq u_{\mathrm{II}}(x)$. When $u_{\mathrm{I}}(x)=u_{\mathrm{II}}(x)$, we say that the game has a value, given by $u(x):=u_{\mathrm{I}}(x)=u_{\mathrm{II}}(x)$.

Our definition of value for player I penalizes player I severely for not forcing the game to terminate with probability one, awarding $-\infty$ in this case.

\begin{lemma}
    The function $u=u_{\mathrm{I}}$ satisfies the equation
    \begin{equation}\label{discrete_biased_infty}
        u(x)=\left(\frac{\rho}{\rho+1} \sup _{y:(x, y) \in E_1} u(y)+ \frac{1}{\rho+1}\inf _{y:(x, y) \in E_2} u(y)\right)+f(x)
    \end{equation}

for every non-terminal state $x \in X \backslash Y$ for which the right-hand-side is well-defined, and $u_{\mathrm{I}}(x)=-\infty$ when the right-hand-side is of the form $(\infty+(-\infty))+f(x)$. The analogous statement holds for $u_{\mathrm{II}}$, except that $u_{\mathrm{II}}(x)=+\infty$ when the right-handside is of the form $(\infty+(-\infty))+f(x)$.

\end{lemma}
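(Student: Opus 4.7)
\emph{Proof proposal.} This is a standard dynamic programming (Bellman) argument: condition on the outcome of the first coin toss, decompose any strategy into its first move plus a history-indexed family of continuation strategies, and compare with the value function after one step. I focus on $u=u_I$ and prove both inequalities for
\[
R(x) := f(x) + \frac{\rho}{\rho+1}\sup_{y:(x,y)\in E_I} u_I(y) + \frac{1}{\rho+1}\inf_{y:(x,y)\in E_{II}} u_I(y);
\]
the argument for $u_{II}$ is entirely parallel, with sup/inf interchanged and the $+\infty$ convention replacing the $-\infty$ convention.

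To show $u_I(x)\le R(x)$, fix any $S_I$ from $x$. It prescribes a first move $y_1$ (to be used if Player I wins toss $1$) together with continuations from every possible post-first-step state. Player II is free to choose her own first move $y_2\in\{y:(x,y)\in E_{II}\}$ and her own continuations. Conditioning on the first toss and using that from any state $z$ the continuation value minimized over Player II is at most $u_I(z)$,
\[
\inf_{S_{II}} F_{-}(S_I,S_{II}) \le f(x) + \frac{\rho}{\rho+1}\,u_I(y_1) + \frac{1}{\rho+1}\inf_{y_2}u_I(y_2).
\]
Taking $\sup_{S_I}$, which is free to choose $y_1$, yields $u_I(x)\le R(x)$.

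For the reverse inequality when $R(x)$ is well-defined, fix $\varepsilon>0$. Choose $y_1^*$ with $u_I(y_1^*)\ge \sup_{y_1}u_I(y_1) - \varepsilon$ and, for each possible one-step successor $z$ of $x$, an $\varepsilon$-optimal strategy $S_I^{z,*}$ from $z$. Glue these into a strategy $S_I^*$ from $x$ that plays $y_1^*$ on a first-toss win and thereafter executes $S_I^{z,*}$ depending on the realized one-step state $z$. The first-toss decomposition then gives, for every $S_{II}$,
\[
F_{-}(S_I^*, S_{II}) \ge f(x) + \frac{\rho}{\rho+1}\bigl(u_I(y_1^*)-\varepsilon\bigr) + \frac{1}{\rho+1}\bigl(u_I(y_2(S_{II})) - \varepsilon\bigr),
\]
so $u_I(x)\ge R(x)-2\varepsilon$, and $\varepsilon\downarrow 0$ closes the case.

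For the degenerate case $R(x)=\infty+(-\infty)$, pick a neighbor $y_2^*\in E_{II}$ with $u_I(y_2^*)=-\infty$. For any $S_I$, Player II targets $y_2^*$ on her first-toss win and then follows a $(-M)$-bad continuation (either forcing non-termination or producing expected payoff $\le -M$); the resulting first-toss branch has positive probability $1/(\rho+1)$, and by the non-termination/integrability convention the overall $F_{-}(S_I,S_{II}) = -\infty$, giving $u_I(x)=-\infty$ as stated. The main obstacle throughout is careful strategic bookkeeping: strategies depend on the full history, so the ``first move plus continuation'' decomposition is really a measurable shift on the play tree, and the non-termination penalty must be tracked so that $F_{-}=-\infty$ on any positive-probability first-toss branch propagates to $F_{-}=-\infty$ globally.
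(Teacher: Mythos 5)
The paper states this lemma without giving a proof of its own (the surrounding setup is essentially lifted from the game-theoretic preliminaries of PSSW and Peres--\v Sun\'i\'c--Schramm--Sheffield and Peres--Pete--Somersille, where the dynamic-programming argument is spelled out). Your proposal reproduces exactly that standard Bellman/first-step decomposition, so the approach is the right one. A few places deserve tightening before it would pass as airtight.

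In the upper bound $u_I(x)\le R(x)$ you write ``the continuation value minimized over Player II is at most $u_I(z)$.'' That is correct, but for it to yield the displayed inequality you must also handle the case $u_I(y_1)=-\infty$ or $\inf_{y_2}u_I(y_2)=-\infty$: there an $\varepsilon$-optimal response does not make sense and you instead need to say that Player~II can drive the continuation expectation below $-M$ for arbitrary $M$, which then forces $\inf_{S_{II}}F_-(S_I,S_{II})=-\infty$. Likewise in the lower bound $u_I(x)\ge R(x)$ when $R(x)$ is finite, you should note explicitly that the $\varepsilon$-optimal continuation $S_I^{z,*}$ necessarily forces a.s.\ termination against every $S_{II}$ (since its guaranteed value exceeds $-\infty$); this is what licenses the first-step conditional-expectation decomposition rather than the possibility of $F_-=-\infty$ leaking in. And the case $R(x)=+\infty$ (i.e.\ $\sup_{y_1}u_I(y_1)=+\infty$ with $\inf_{y_2}u_I(y_2)>-\infty$) requires choosing $y_1^*$ with $u_I(y_1^*)\ge M$ for arbitrary $M$ rather than an $\varepsilon$-optimal one.

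For the degenerate case, your argument that a positive-probability branch with $F_-=-\infty$ forces $F_-=-\infty$ globally is correct when the mechanism is non-termination (the game then does not terminate a.s.), but when the bad branch merely yields expected payoff $\le -M$ you are instead taking $M\to\infty$, and you should say so; these are two distinct mechanisms, not a single one. You flag all of these bookkeeping points at the end as known obstacles, which is honest, but in a finished proof they need to be executed, not deferred. With those adjustments the proof is complete and matches the standard argument the paper is implicitly invoking.

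One last cosmetic point: the paper's display writes $E_1,E_2$ but the game was set up with $E_{\mathrm{I}},E_{\mathrm{II}}$; you silently (and correctly) read this as a typo, which is fine, but worth noting.
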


In this work we shall focus on the case $f=0$ which has nice variational structure.
%

\subsection{Biased tug-of-war on length space}

Given a length space $X$, some $Y \subset X$, a real function $g$ on $Y$, a small $\epsilon > 0$, and any $\rho(\epsilon) > 0$, the \textit{biased $\epsilon$-tug-of-war game} is defined as follows. The starting position is $x_0 \in X \setminus Y$. At the $k$th step the two players toss a biased coin which player I wins with odds of $\rho(\epsilon)$ to $1$, i.e., with probability $\rho(\epsilon)/(\rho(\epsilon) + 1)$, and the winner chooses $x_k$ with $d(x_k, x_{k-1}) < \epsilon$. (Even if $X \subset \mathbb{R}^n$, the moves are measured in the path metric of $X$, not in the Euclidean metric.) The game ends when $x_k \in Y$, and player II pays the amount $g(x_k)$ to player I. Assuming that the value of this game exists (defined and proved below), it is denoted by $u^{\epsilon}(x_0)$.

\medskip

Note that if the probability for player I to win a coin toss is $\dfrac{1 + \theta(\epsilon)}{2}$, then we have
\[
\rho(\epsilon) = \frac{1 + \theta(\epsilon)}{1 - \theta(\epsilon)} 
\quad \text{and} \quad
\theta(\epsilon) = \frac{\rho(\epsilon) - 1}{\rho(\epsilon) + 1}.
\]
We also denote the reverse odd-ratio
$$
r(\epsilon) := \frac{1}{\rho(\epsilon)}.
$$

Throughout this paper we shall assume
\[
\rho(\epsilon) := e^{\beta \epsilon}
\quad \text{and} \quad
\theta(\epsilon) := \tanh\!\left(\frac{\beta \epsilon}{2}\right)
= \frac{e^{\beta \epsilon} - 1}{e^{\beta \epsilon} + 1}
= \frac{\beta \epsilon}{2} - \frac{(\beta \epsilon)^3}{24} + O((\beta \epsilon)^5).
\]

The importance of such choice of $\rho$ is already mentioned in \cite{peres2010biased}. We will see this choice of $\rho$ is consistent with our $\beta$-AM.

The value of the game $u_I^{\epsilon}, u_{II}^{\epsilon}$ are defined similarly as before. From now on in this subsection we assume $f=0$.

\begin{theorem}\label{main_tug_of_war}\cite{peres2010biased}
    Under the same assumption as in \cref{main}, there exists an unique $\beta$-AM of $g$, as the limit of the $u^{\epsilon}$. 
\end{theorem}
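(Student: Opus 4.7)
The plan is to adapt the Peres--Schramm--Sheffield--Wilson strategy for unbiased tug-of-war, with $\beta$-exponential cones taking the place of linear cones. I work on the compact length space $X$ with boundary data $g$ of finite $\beta$-slope $L^\beta$.

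First, I would establish the biased dynamic programming principle: $u_I^\epsilon = u_{II}^\epsilon =: u^\epsilon$ and
$$
u^\epsilon(x) = \frac{\rho(\epsilon)}{\rho(\epsilon)+1}\sup_{B_\epsilon(x)} u^\epsilon + \frac{1}{\rho(\epsilon)+1}\inf_{B_\epsilon(x)} u^\epsilon,\qquad x\in X\setminus Y,
$$
with $u^\epsilon|_Y=g$. This follows by a monotone fixed-point argument on $C(X)$; on a compact length space each player can force termination with probability one (Player I by always stepping toward a fixed $y_0\in Y$, Player II as in \cite{peres2010biased}).

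Second --- and this is the key algebraic observation --- the choice $\rho(\epsilon)=e^{\beta\epsilon}$ is calibrated so that the positive cone $\phi(x)=\frac{A}{\beta}(1-e^{-\beta|x-x_0|})+K$ is discrete $\rho$-harmonic:
$$
\frac{\rho}{\rho+1}\phi(x_\epsilon^+)+\frac{1}{\rho+1}\phi(x_\epsilon^-)=\phi(x)
$$
whenever $|x_\epsilon^\pm-x_0|=|x-x_0|\pm\epsilon$, as the numerator $e^{\beta\epsilon}+1-e^{-\beta|x-x_0|}(1+e^{\beta\epsilon})$ factors cleanly as $(e^{\beta\epsilon}+1)(1-e^{-\beta|x-x_0|})$. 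Consequently, if a positive cone of slope $L^\beta$ centered at some $y_0\in Y$ majorizes $g$ on $Y$, then under the cone-following strategy for Player II, $\phi(x_k)$ is a supermartingale, and optional stopping yields $u^\epsilon \leq \phi$; the symmetric negative-cone argument furnishes a matching lower bound. Varying $y_0$ over $Y$ produces $\epsilon$-uniform $L^\infty$ bounds and an exponential modulus of continuity for $\{u^\epsilon\}$, whence Arzel\`a--Ascoli extracts a subsequence $u^{\epsilon_k}\to u$ uniformly on $X$ with $u|_Y=g$.

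Third, to identify $u$ as the unique $\beta$-AM I verify comparison with $\beta$-exponential cones. For any open $V\subset X\setminus Y$, $x_0\notin V$, and test cone $\psi=C_{x_0}^{+}+Be^{-\beta|x-x_0|}+K$ with $u\leq \psi$ on $\partial(V\setminus\{x_0\})$, I would absorb the $Be^{-\beta|x-x_0|}$ term via a slope shift so that $\psi$ remains discrete $\rho$-harmonic, then play the $\epsilon$-game stopped at the first exit from $V$. The martingale argument combined with the DPP gives $u^\epsilon\leq \psi+o_\epsilon(1)$ on $V$; sending $\epsilon_k\downarrow 0$ yields CECA, and CECB is symmetric. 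By \cref{main}, $u$ is the unique $\beta$-AM, so the entire family $u^\epsilon$ (not just a subsequence) converges.

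The main obstacle is this last step. The martingale argument localized to $V$ requires the $V$-stopped game to terminate almost surely, and the discrete $\sup$/$\inf$ over $B_\epsilon(x)$ need not be attained at distance exactly $\epsilon$ from $x_0$ along the gradient ray (the length-space axiom supplies only near-extremizers). I would handle both by perturbing the cone slope by $O(\epsilon)$ and restricting to compact subdomains bounded away from $x_0$, closely mirroring the classical unbiased treatment in \cite{peres2010biased} and \cite{crandall2001optimal}.
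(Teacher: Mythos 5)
Your key algebraic observation is exactly the calibration that makes the whole theorem work: with $\rho(\epsilon)=e^{\beta\epsilon}$, any function of the form $a+b\,e^{-\beta|x-x_0|}$ is discrete $\rho$-harmonic along rays from $x_0$, since
\[
\rho\,e^{-\beta(r+\epsilon)}+e^{-\beta(r-\epsilon)}=(\rho+1)e^{-\beta r},
\]
which also means the ``slope shift'' you mention is unnecessary --- the $Be^{-\beta|x-x_0|}$ term is already harmless. Your overall architecture (DPP, cone-following supermartingales, compactness, CEC for the limit, uniqueness from the equivalent characterizations) is the right one, and it is essentially the one this paper delegates to \cite{peres2010biased}.

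There is, however, a logical ordering problem in Step 2. Comparison against cones centered at boundary points $y_0\in Y$ gives you $\epsilon$-uniform $L^\infty$ bounds, but it does \emph{not} by itself give a modulus of continuity at interior pairs $x_1,x_2$ far from $Y$; for that you need comparison with cones whose vertex is an interior point near $x_1$ or $x_2$. You only establish interior cone comparison in Step 3, and there you apply it to the \emph{limit} $u$ --- but you cannot extract that limit via Arzel\`a--Ascoli without the equicontinuity first. The fix is to run your Step 3 martingale argument on the discrete value $u^\epsilon$ directly (stopped at the first exit from $V_\epsilon=\{x:\overline{B_\epsilon(x)}\subset V\}$), obtaining a discrete CEC for $u^\epsilon$ with $O(\epsilon)$ boundary error; this is precisely what yields a uniform Lipschitz bound of the form $|u^\epsilon(x_1)-u^\epsilon(x_2)|\le (L^\beta-\beta m)\,d^\epsilon(x_1,x_2)$, after which Arzel\`a--Ascoli is legitimate. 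The route in \cite{peres2010biased} that this paper follows avoids the compactness extraction altogether: one sandwiches $u_I^\epsilon,u_{II}^\epsilon$ between the I-favored and II-favored game values $v^\epsilon,w^\epsilon$, and because $\rho(\epsilon)=e^{\beta\epsilon}$ is log-linear one gets $v^{2\epsilon}\le v^\epsilon$ and $w^\epsilon\le w^{2\epsilon}$, hence monotone convergence with $\|w^\epsilon-v^\epsilon\|_\infty\to 0$ on compact $X$. Both routes reach the same endpoint; yours is workable once the ordering is repaired, but as written the equicontinuity claim is not yet justified.
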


\begin{theorem}\label{u_I_theorem}
    In the general length space $X$ and $g$ is bounded below, there exists an $\beta$-AM of $g$, as the limit of the $u^{\epsilon}_I$. In Euclidean space, this corresponds to the smallest viscosity solution to the biased infinity Laplace equation.
\end{theorem}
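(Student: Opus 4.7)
The overall strategy is to adapt the game-theoretic arguments of \cite{peres2010biased} to non-compact length spaces and then invoke the characterizations of $\beta$-AM from \cref{main_general}. The crucial asymmetry making this possible is that with $\rho(\epsilon) = e^{\beta\epsilon} > 1$, player I retains a termination-forcing strategy even when $X$ is unbounded: fixing any $y_\star \in Y$, the strategy ``when winning a toss, step distance $\epsilon$ along a geodesic toward $y_\star$'' makes $d(x_k, y_\star)$ a process with negative drift $-\epsilon\theta(\epsilon)$ regardless of $S_{II}$, so the game terminates almost surely with finite expected hitting time. Combined with the boundedness of $g$ (below by hypothesis, above by $L^\beta_g(Y)/\beta < \infty$), this yields $\inf_Y g \leq u^\epsilon_I(x) \leq \sup_Y g$ uniformly in $\epsilon$ and $x$, and the dynamic programming equation \eqref{discrete_biased_infty} holds in the strong sense without $\pm\infty$ ambiguity.

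Next I would prove a discrete comparison with exponential cones for $u^\epsilon_I$: if $u^\epsilon_I \leq C_{x_0}^+ + B e^{-\beta|x-x_0|} + K$ on the $\epsilon$-boundary of a set $V$ with $x_0 \notin V$, then the bound holds inside $V$ up to $o(1)$ as $\epsilon \downarrow 0$. A Taylor expansion shows that the $\beta$-exponential cone is an approximate discrete solution: $\frac{\rho}{\rho+1}\sup_{B_\epsilon(x)} C^+_{x_0} + \frac{1}{\rho+1}\inf_{B_\epsilon(x)} C^+_{x_0} = C^+_{x_0}(x) + O(\epsilon^3)$, with sup and inf attained by moving radially away from and toward $x_0$. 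A strategy-based argument in the spirit of Peres--Sheffield's ``pull toward $x_0$'' recipe then turns the cone into an approximate supermartingale; optional stopping at the exit time of $V$ and summing the $O(\epsilon^3)$ drift over the $O(\epsilon^{-2})$ expected steps yields the claim. Any subsequential uniform limit $u$ therefore satisfies CEC in the sense of \cref{def_exp_cone}, and by \cref{main_general} item~\ref{thm_comparison_cones_gene} $u$ is the unique $\beta$-AM, so the full family $u^\epsilon_I$ converges.

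Existence of a subsequential limit requires local equicontinuity, which I would derive from the same discrete CEC by comparing $u^\epsilon_I$ at nearby points $x, x'$ against a cone of slope $L^\beta_g(Y)$ centered suitably far away; this yields a modulus that extends continuously up to $Y$, and Arzela--Ascoli then applies on compact subsets of $X$. For the Euclidean assertion, the equivalence of $\beta$-AM with the smallest viscosity solution is \cref{main_general} item~\ref{thm_biased_infty_lap_gene}; a self-contained verification is also available by observing that any viscosity solution $v$ with $v|_Y = g$ satisfies CECB, whence the same supermartingale argument with $v$ in place of a cone gives $v \geq u^\epsilon_I - o(1)$ and hence $v \geq u$.

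I expect the main obstacle to be the equicontinuity step in the genuinely non-compact regime. The target-following strategy has expected hitting time $\sim d(x, y_\star)/(\beta\epsilon^2)$, which is unbounded in $x$, so the supermartingale drifts must be controlled uniformly in the escape radius. I plan to do this by localizing each estimate to a large ball containing the cone vertex and exploiting the weight $e^{-\beta|x-x_0|}$ to absorb residual escape probability. A conceptually separate subtlety, already flagged in \cref{main_general}, is that the local properties (items~\ref{thm_convexity}--\ref{thm_slope_estimate} of \cref{main}) no longer suffice to identify the extension in the non-compact case, so the limit $u$ must be characterized via the global CEC rather than via pointwise blow-ups.
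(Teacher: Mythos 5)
Your overall skeleton (discrete CEC for $u^\epsilon_I$ $\Rightarrow$ CEC for the limit $\Rightarrow$ invoke \cref{main_general}; then a separate minimality comparison) is the same as the paper's, but two of your intermediate steps diverge in ways that matter.

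First, your discrete cone-comparison argument is based on the cone being an \emph{approximate} discrete solution with $O(\epsilon^3)$ drift, summed over $O(\epsilon^{-2})$ expected steps. This is exactly what you identify as ``the main obstacle,'' because in the unbounded regime the expected exit time from $V$ is not controlled uniformly. The paper sidesteps this entirely: with the specific bias $\rho(\epsilon) = e^{\beta\epsilon}$, the exponential cone $Q(r) = \frac{A}{\beta}(1-e^{-\beta r}) + K$ satisfies the dynamic programming equation \emph{exactly},
\[
\frac{e^{\beta\epsilon}\,Q(r+\epsilon) + Q(r-\epsilon)}{1+e^{\beta\epsilon}} = Q(r),
\]
so the cone value along a radial pull is a genuine supermartingale (up to the usual $\delta 2^{-k}$ correction for approximate geodesics), and optional stopping applies without any bound on $\mathbb{E}[\tau]$. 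Your plan to ``localize to a large ball and use the weight $e^{-\beta|x-x_0|}$ to absorb escape probability'' is not developed, and I do not see how it closes the gap without in effect rediscovering this exact identity. This is precisely the point of the paper's insistence that $\rho(\epsilon) := e^{\beta\epsilon}$.

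Second, and more seriously, you only sketch CECA for $u^\epsilon_I$, yet you then assert that the subsequential limit satisfies the full CEC. Establishing CECB for $u^\epsilon_I$ (as opposed to for $u^\epsilon_{II}$, which may be $+\infty$ in the non-compact case) is the genuinely new content of the paper's Lemma on tug-of-war and CEC: when the negative cone's vertex $z$ lies in $X\setminus Y$, the argument has player I pull toward $z$, then \emph{switch} at the hitting time of $z$ to a near-optimal continuation strategy with value $\geq u^\epsilon_I(z)-\delta$, and combines the optional stopping bound with $u^\epsilon_I(z) > -\infty$ (which holds because $\beta>0$ lets player I force termination). Without this step the limit is only known to be CECA, which is not enough to identify it as a $\beta$-AM via item~\ref{thm_comparison_cones_gene}. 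Finally, your appeal to ``uniqueness of the $\beta$-AM'' to upgrade subsequential convergence to full convergence is shaky: uniqueness of the $\beta$-AM in the non-compact case is subtle (the paper's Section~5.1 example is cautionary), and the paper instead obtains full convergence from the monotonicity of the II-favored values $v^\epsilon$ together with the estimate $\|v^\epsilon - u^\epsilon_I\|_\infty = O(\epsilon)$, which your proposal omits entirely.
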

By considering the natural coupling of restricted tug-of-war with different bias, we have the following corollary: 
\begin{corollary}
    For $\beta_1$-extension $u_1$ and $\beta_2$-extension $u_2$ with the same boundary data $g$ and $ 0 \leq \beta_1 \leq \beta_2$, we have $u_1 \leq u_2$.
\end{corollary}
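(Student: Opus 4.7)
The plan is to pass through the discrete $\epsilon$--tug-of-war values and exploit the fact that the bias $\beta$ enters only through the single parameter $p(\beta) := e^{\beta\epsilon}/(e^{\beta\epsilon}+1)$, which is monotone in $\beta$. By \cref{u_I_theorem}, for $i=1,2$ we have $u_i = \lim_{\epsilon \downarrow 0} u_I^{i,\epsilon}$, where $u_I^{i,\epsilon}$ denotes the Player~I value of the $\epsilon$-tug-of-war on $X$ with bias $\beta_i$ and boundary data $g$. It therefore suffices to prove the discrete monotonicity $u_I^{1,\epsilon} \le u_I^{2,\epsilon}$ for every fixed $\epsilon>0$ and then let $\epsilon \downarrow 0$.

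First I would couple the two games by a common sequence of uniform random variables $U_k \in [0,1]$, declaring Player~I the coin-toss winner in game $i$ precisely when $U_k < p_i := e^{\beta_i \epsilon}/(e^{\beta_i\epsilon}+1)$. Since $p_1 \le p_2$, every step at which Player~I wins in game~1 is also a step at which she wins in game~2. Given $\delta$-optimal strategies $(\sigma_1,\tau_1)$ for game~1, define the \emph{restricted} strategy $\tilde{\sigma}_2$ for Player~I in game~2 that plays $\sigma_1$'s move on $\{U_k < p_1\}$ and imitates $\tau_1$'s move on $\{p_1 \le U_k < p_2\}$. Against the particular opponent $\tau_1$ this coupling forces the two token trajectories to coincide step-by-step, so the game-2 payoff equals the game-1 payoff almost surely; this already gives the inequality against one specific Player~II strategy.

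To extend the inequality to arbitrary opponents $\tau_2$ in game~2, the cleanest route is to repackage the coupling as a subsolution/submartingale argument. Set $v_1 := u_I^{1,\epsilon}$. The dynamic programming principle for game~1, combined with $\sup \ge \inf$ and $p_2 \ge p_1$, yields
\[
v_1(x) \;=\; p_1 \sup_{B_\epsilon(x)} v_1 \;+\; (1-p_1)\inf_{B_\epsilon(x)} v_1 \;\le\; p_2 \sup_{B_\epsilon(x)} v_1 \;+\; (1-p_2)\inf_{B_\epsilon(x)} v_1,
\]
so $v_1$ is a subsolution to the game-2 Bellman equation with the same boundary data $g$. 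If in game~2 Player~I plays ``move to nearly maximize $v_1$'' whenever she wins the coin, then against any $\tau_2$ the process $(v_1(X_k))_k$ is a submartingale; combining with the hypothesis that $g$ is bounded below, optional stopping gives $v_1(x_0) \le \E[g(X_{\tau})]$ and hence $u_I^{2,\epsilon}(x_0) \ge v_1(x_0) - \delta$ for arbitrary $\delta>0$, which is the desired discrete monotonicity.

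The main obstacle is the usual termination issue for biased tug-of-war on a non-compact length space: the naive ``maximize $v_1$'' strategy need not force the game to end almost surely. I would handle this exactly as in the proof of \cref{u_I_theorem}, by perturbing Player~I's strategy with an arbitrarily small probability of pulling toward a fixed target $y_0 \in Y$, so that termination becomes a.s.\ while the expected payoff is shifted by an amount that vanishes with the perturbation. Letting first the perturbation, then $\delta$, then $\epsilon$ tend to zero completes the proof that $u_1 \le u_2$.
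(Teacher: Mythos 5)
Your high-level plan---pass to the discrete $\epsilon$-game, exploit the monotonicity of $p(\beta)=e^{\beta\epsilon}/(e^{\beta\epsilon}+1)$, and couple the two games by a common sequence of uniforms---matches what the paper has in mind when it invokes ``the natural coupling of restricted tug-of-war with different bias.'' But neither of your two sub-arguments is actually closed. The coupling you write down, as you yourself note, only gives the inequality against the single opponent $\tau_1$. The fix is not to abandon the coupling but to complete it with a \emph{phantom} game-$1$ opponent: fix a $\delta$-optimal strategy $\sigma_1$ for Player~I in game~$1$ (which, by definition of $u_I^{1,\epsilon}$, forces a.s.\ termination against \emph{every} game-$1$ opponent); in game~$2$, on $\{U_k<p_1\}$ Player~I plays $\sigma_1$'s move, on $\{p_1\le U_k<p_2\}$ she plays an arbitrary adversarial move (e.g.\ a minimizer of $v_1$), and on $\{U_k\ge p_2\}$ the actual opponent $\tau_2$ moves. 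Conditioned on ``Player~II wins in game~$1$'' (i.e.\ $U_k\ge p_1$), the move is the phantom move with probability $(p_2-p_1)/(1-p_1)$ and $\tau_2$'s move otherwise, which defines a legitimate (randomized) game-$1$ opponent $\tilde\tau_1$. Since trajectories coincide, the game-$2$ play of $\tilde\sigma_2$ against $\tau_2$ has the same law as the game-$1$ play of $\sigma_1$ against $\tilde\tau_1$; hence it terminates a.s.\ and yields expected payoff $\ge u_I^{1,\epsilon}(x_0)-\delta$, for every $\tau_2$. This is the clean way to get $u_I^{2,\epsilon}\ge u_I^{1,\epsilon}$ and then let $\epsilon\downarrow 0$.

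The fallback submartingale argument has a genuine gap in exactly the place you flagged, and the repair you propose does not work. If Player~I plays ``maximize $v_1$'' but with probability $\eta$ pulls toward a fixed $y_0\in Y$, the worst-case one-step drift of $d(X_k,y_0)$ is $p_2\bigl(\eta(-\epsilon)+(1-\eta)\epsilon\bigr)+(1-p_2)\epsilon = \epsilon\,(1-2p_2\eta)$, which is strictly positive for small $\eta$: the token drifts \emph{away} from $y_0$, so no a.s.\ termination is forced and optional stopping has nothing to stop at. The paper's actual termination device in the lemma establishing CECB for $u_I^\epsilon$ is different: pull toward a \emph{fixed} point $z$ until hitting it, then switch to a near-$\delta$-optimal continuation from $z$. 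That device targets a single point and does not adapt to the ``maximize $v_1$'' strategy, whose target changes at every step. So you should drop the subsolution/submartingale route and carry the coupling through with the phantom opponent as above.
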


The reader might wonder why this corresponds to Aronsson equation with $H(\nabla u, u) = |\nabla u| + \beta u$ and we give some heuristics. Our discovery is motivated by the biased tug of war. Note that we can rewrite \cref{discrete_biased_infty} as ($f=0$)
$$
u(x) - r\min_{y\sim x} u(y) = \max_{y\sim x} u(y)  - ru(x) 
$$
Thus the quantity $\delta(x) = u(x) - r\min_{y\sim x} u(y)$ under jointly optimal play is non-decreasing along the trajectory. Note that 
$\frac{\delta(x)}{\epsilon} = \frac{u(x) - e^{-\beta \epsilon}\min_{y\sim x} u(y)}{\epsilon} \to |\nabla u|(x) + \beta u(x)$ as $\epsilon \to 0$, if we assume $u$ is sufficient smooth.

\subsection{Regularity}

One can utilize the absolute minimizing property to study the regularity of biased infinity harmonic functions. As far as we are aware, these results are new as the Exponential Absolute Minimizing property was not realized before. The following result shows that any blow up limit is linear, which is the same as in the unbiased case. 

\begin{proposition}\label{linear-blow-up}
    Let $u $ be absolute minimal $\beta$-extension in $U$, for $U \subset \mathbb{R}^n$, and $x^0 \in U$. Let $|\cdot|$ be the Euclidean norm. If $\lambda_j>0$ satisfies $\lambda_j \downarrow 0, v \in C\left(\mathbb{R}^n\right)$, and

$$
v(x)=\lim _{j \rightarrow \infty} \frac{u\left(\lambda_j x+x^0\right)-u\left(x^0\right)}{\lambda_j}
$$

holds uniformly on compact subsets of $\mathbb{R}^n$, then $v$ is a linear function.
\end{proposition}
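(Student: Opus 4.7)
The plan is to reduce to the classical (unbiased) AMLE setting via rescaling, and then invoke the known linearity of blow-up limits for infinity harmonic functions on $\mathbb{R}^n$.

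I first set $u_j(x) := \lambda_j^{-1}\bigl(u(\lambda_j x + x^0) - u(x^0)\bigr)$, so that $u_j \to v$ uniformly on compact subsets of $\mathbb{R}^n$. The crucial observation is a scaling identity for the $\beta$-slope: using $\beta \lambda_j |x-y| = \beta |\lambda_j x - \lambda_j y|$ (which requires the Euclidean norm), direct substitution gives
\[
L^{\beta \lambda_j}_{u_j}(V) \;=\; L^{\beta}_u(\lambda_j V + x^0) \;-\; \beta\, u(x^0)
\]
for every open $V \subset U_j := \lambda_j^{-1}(U - x^0)$. Since the right-hand side depends only on the image $\lambda_j V + x^0$ (up to a constant), and since $u$ is $\beta$-AM on $U$, it follows that $u_j$ is $\beta_j$-AM on $U_j$ with rescaled bias $\beta_j := \beta \lambda_j \downarrow 0$.

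Next I pass to the limit. For any bounded open $V \subset \mathbb{R}^n$, uniform convergence $u_j \to v$ together with the expansion $\beta_j/(1 - e^{-\beta_j t}) = 1/t + O(\beta_j)$ yields $L^{\beta_j}_{u_j}(V) \to L^0_v(V)$ and likewise on $\partial V$, where $L^0$ is the classical Lipschitz constant. Passing to the limit in the $\beta_j$-AM equality $L^{\beta_j}_{u_j}(V) = L^{\beta_j}_{u_j}(\partial V)$ gives $L^0_v(V) = L^0_v(\partial V)$, so $v$ is a classical AMLE on $\mathbb{R}^n$. Alternatively, one can argue via comparison with exponential cones (Theorem~\ref{main}, item~\ref{thm_comparison_cones}): as $\beta_j \downarrow 0$ the $\beta_j$-exponential cones converge uniformly on bounded sets to the classical linear cones, so the CEC inequality passes to the limit. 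In addition $v(0) = 0$, and $v$ is globally Lipschitz on $\mathbb{R}^n$ with constant controlled by the local $\beta$-slope of $u$ near $x^0$.

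Finally I invoke the classical Liouville-type theorem for unbiased infinity harmonic functions: a globally Lipschitz AMLE on all of $\mathbb{R}^n$ is a linear function (Savin in two dimensions, Evans--Savin in arbitrary dimension; compare also the discussion in \cite{crandall2001optimal,aronsson2004tour}). This yields $v(x) = a \cdot x$ for some $a \in \mathbb{R}^n$. The main technical obstacle I anticipate is the limit passage above: one must ensure that the $O(\beta_j)$ discrepancies between biased and unbiased quantities (slopes or cones) do not destroy the equality characterizing absolute minimality as $j \to \infty$. Once that is done, the rest is a direct citation of the (nontrivial) classical regularity theory.
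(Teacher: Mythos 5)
Your rescaling is sound and parallels the paper's strategy: setting $u_j(x)=\lambda_j^{-1}(u(\lambda_j x+x^0)-u(x^0))$ does convert $\beta$-absolute minimality on $U$ into $\beta_j$-absolute minimality on $U_j$ with $\beta_j=\beta\lambda_j\downarrow 0$, and the identity $L^{\beta\lambda_j}_{u_j}(V)=L^{\beta}_{u}(\lambda_j V+x^0)-\beta u(x^0)$ (which you correctly note uses the homogeneity of the Euclidean norm) checks out. The passage to the limit, giving a classical AMLE $v$ on $\mathbb{R}^n$ with $v(0)=0$ and a global Lipschitz bound, is also in the right spirit, and is close to what the paper does, although the paper additionally normalizes by the local slope $L_0=S_u^+(x^0)=T_u(x^0)$.

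The gap is the last step. You conclude by invoking ``a globally Lipschitz AMLE on $\mathbb{R}^n$ is linear'' and attribute it to Savin and Evans--Savin. Those results are pointwise regularity statements ($C^1$ in $n=2$, everywhere differentiability in general $n$), not Liouville theorems; and to my knowledge the bare statement ``globally Lipschitz $\Rightarrow$ linear'' for an $\infty$-harmonic function on all of $\mathbb{R}^n$ is not an established theorem (it is closely tied to the still-open uniqueness-of-blow-down/$C^1$ problems). What \emph{is} elementary, and what \cite{aronsson2004tour,crandall2001optimal} actually use, is that a blow-up limit carries additional rigidity: after normalizing so that $S_u^+(x^0)=T_u(x^0)=1$, one gets not only $L_v(B_r(0))\leq 1$ but the exact equalities
\[
\max_{|w|=r} v(w)=r,\qquad \min_{|w|=r} v(w)=-r,
\]
coming from $\max_{|w|=\lambda r}u(w)/\lambda = S_u^+(0,\lambda r)\cdot\tfrac{1-e^{-\beta\lambda r}}{\beta\lambda}\to r\,S_u^+(0)$. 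That is, the Lipschitz constant of the limit is \emph{attained radially at $0$}, and it is precisely this attained-slope structure (together with the increasing-slope estimate) that forces linearity in the classical argument. Your proof never extracts that structure because you only track a global Lipschitz bound, so the argument does not close as written. To repair it, you should keep track of the local slope $T_u(x^0)$ under rescaling, normalize by it, and pass to the limit in $\max_{|w|=\lambda r}u(w)$ and $\min_{|w|=\lambda r}u(w)$ to obtain the radial equalities above, after which the conclusion follows exactly as in \cite{aronsson2004tour}.
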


\subsection{Background and literature}
Analogous to the the connection of Brownian motion and harmonic functions, Peres, Schramm, Sheffield, and Wilson \cite{PSSW} revealed a connection between infinity harmonic function and stochastic games called Tug-of-war. Since then, there has been intensive studies of connection of stochastic games and and nonlinear PDEs, e.g. \cite{peres2008tug} for $p$-harmonic function and \cite{bjorland2012nonlocal} for fractional infinity Laplacian. See \cite{blanc2019game} for a compresentive introduction. The biased infinity harmonic function is introduced by \cite{peres2010biased} via a probabilistic argument by allowing bias in the coins of tug-of-war, and has been further studied in \cite{armstrong2011infinity_gradient, liu2021weighted_ev, liu2018inhomogeneous, liu2019parabolic, peres2019biased_graph, liu2022regularity}. See \cite{fu_Hammond_Pete_2022stake} for a recent interpretation of biased infinity harmonic functions as value functions of stake-governed Tug-of-war, where each player has a given budget to play the game, and at each turn the bias of the coin is determined by how much each player stakes.

The study of infinity harmonic functions and absolute minimizing Lipschitz extension was initiated by Aronsson \cite{aronsson1967extension,aronsson1968partial,aronsson1969minimization,aronsson1984certain}, as the prototype of calculus of variation in $L^{\infty}$ norm. There have been great
 interests in the last few years to study the minimization problem of the supremal functional, for a Hamiltonian $H(x,z,p)$,
:
 $$
 F(u, \Omega)=\underset{x \in \Omega}{\operatorname{ess} \sup } H(x, u(x), \nabla u(x)), \quad \Omega \subset \mathbf{R}^n, u \in W^{1, \infty}\left(\Omega \right) .
$$
It was shown in
\cite{barron2008infinity_generalization,crandall2003efficient} when $H \in C^1$  that any absolute minimizer for $H$ is a viscosity solution of the Aronsson equation:
$$
H_p(D u(x), u(x), x) \cdot \nabla_x(H(D u(x), u(x), x)) =0 \quad \text { in } \Omega.
$$
One of the main open problems in the area is regularity. In particular, it is unknown if the infinity harmonic function is $C^1$ in dimension greater than $2$. See \cite{evans2008c1_alpha, savin2005c} for $C^1$ regularity in dimension $2$ and \cite{evans2011everywhere} for everywhere differentiability in any dimension. In the case $H(x, u(x), \nabla u(x)) = H(\nabla u(x))$ or $H(x, u(x), \nabla u(x)) = H(x, \nabla u(x))$, i.e. $H$ has no dependence on $u$, the above regularities results were generalized to general Hamiltonian $H(\cdot)$ with suitable convexity or quasi-convexity assumptions. See \cite{champion2007principles, c_1_arrosson_wang2008c,everywhere_arrosson_peng2021}. 

Another main issue is uniqueness of the Aronsson equation. When $$H(x, u(x), \nabla u(x)) = H(\nabla u(x)) \text{ or } H(x, \nabla u(x)),$$ the uniqueness is established in great generality in recent years as in \cite{armstrong2011convexity, uniqueness_nonunique_Arrosson_jensen2008, koskela2014intrinsic, x_dependent_uniqueness_miao2017}

However, up to our knowledge there is not much progress on the above two issues when $H(\cdot )$ has $u$ dependence. Neither there has been a variational characterization of biased infinity harmonic functions, except in \cite{peres2019biased_graph} where they introduced a notion of $r$-slope of biased infinity harmonic functions on finite graphs, which could be viewed as a discrete analogy of the $\beta$-slope we introduced in this paper. As shown in Theorem \cref{main}, the $\beta$-AM is the same as the biased infinity harmonic function in Euclidean space with Euclidean norm, which is the Aronsson equation for $H(u, \nabla u) = |\nabla u| + \beta u.$ Note that $H$ is not $C^1$ when $\nabla u=0$, thus previous results \cite{crandall2003efficient, crandall2009derivation} do not apply here. As we will see later the biased AM has many properties similar to the unbiased one, and we believe this and other relative notion, such as biased McShane-Whitney extension and bias convexity, can pave a way to further study on Aronsson equations with $u$-dependent Hamiltonian. In particular, we expect the framework in this paper could cover all Hamiltonians of the type $H(x, u(x), \nabla u(x)) = J(x, \nabla u(x)) + \beta u(x)$ with suitable quasiconvex and coercive conditions on $J$. We shall explore this direction in a future work.

\section{Restricted biased Tug-of-war and viscosity solutions }

We recall some definition about viscosity solutions.

\begin{definition}\label{def_viscosity_sol}
Let \( U \subseteq \mathbb{R}^n \) be a domain. 
An upper semicontinuous function \( u \) is a viscosity solution of 
\( \Delta_{\infty}^{\beta} u \ge 0 \) in \( U \), or, in other words, 
a viscosity subsolution of $ \Delta_{\infty}^{\beta} u = 0 $, if for every local maximum point 
\( \tilde{x} \in U \) of \( u - w \), where \( w \) is \( C^2 \) in some neighbourhood of \( \tilde{x} \), 
we have
\[
\begin{cases}
\Delta_\infty^{\beta} w(\tilde{x}) \ge 0, & \text{if } \nabla w(\tilde{x}) \ne 0, \\[6pt]
D^2 w(\tilde{x}) \ge 0, & \text{if } \nabla w(\tilde{x}) = 0.
\end{cases}
\]

Similarly, a lower semicontinuous function \( u \) is a viscosity solution of 
\( \Delta_{\infty}^{\beta} u \le 0 \), or viscosity supersolution of $ \Delta_{\infty}^{\beta} u = 0 $, 
if for every local minimum point \( \tilde{x} \in U \) of \( u - w \) we have 
\[
\begin{cases}
\Delta_\infty^{\beta} w(\tilde{x}) \le 0, & \text{if } \nabla w(\tilde{x}) \ne 0, \\[6pt]
D^2 w(\tilde{x}) \le 0, & \text{if } \nabla w(\tilde{x}) = 0.
\end{cases}
\]
A function is a viscosity solution of \( \Delta_{\infty}^{\beta} u = 0 \) 
if it is continuous and both a viscosity supersolution and subsolution.
\end{definition}

\begin{definition}\label{def_mean_value}
A function $u \in C(\Omega)$ satisfies the formula
\[
u(x) = \frac{ \displaystyle (1+ \beta \epsilon/2) \max_{B(x,\varepsilon)} \{u\} + (1-\beta \epsilon/2) \min_{B(x,\varepsilon)} \{u\}}{2} + o(\varepsilon^2)
\quad \text{as } \varepsilon \to 0
\]
in the \emph{viscosity sense}, if the two conditions below hold:
\begin{itemize}
    \item If $x_0 \in \Omega$ and if $\phi \in C^2(\Omega)$ touches $u$ from below at $x_0$, then
    \[
    \phi(x_0) \ge 
    \frac{1}{2} \left( (1+ \beta \epsilon/2)
    \max_{|y - x_0| \le \varepsilon} \{\phi(y)\}
    + (1- \beta \epsilon/2)
    \min_{|y - x_0| \le \varepsilon} \{\phi(y)\}
    \right)
    + o(\varepsilon^2)
    \quad \text{as } \varepsilon \to 0.
    \] 
    Moreover, if it so happens that $\nabla \phi(x_0) = 0$, we require that the test function satisfies
    \[
    D^2 \phi(x_0) \le 0.
    \]

    \item If $x_0 \in \Omega$ and if $\psi \in C^2(\Omega)$ touches $u$ from above at $x_0$, then
    \[
    \psi(x_0) \le 
    \frac{1}{2} \left( (1+ \beta \epsilon/2)
    \max_{|y - x_0| \le \varepsilon} \{\psi(y)\}
    + (1- \beta \epsilon/2)
    \min_{|y - x_0| \le \varepsilon} \{\psi(y)\}
    \right)
    + o(\varepsilon^2)
    \quad \text{as } \varepsilon \to 0.
    \]
    Moreover, if it so happens that $\nabla \psi(x_0) = 0$, we require that the test function satisfies
    \[
    D^2 \psi(x_0) \ge 0.
    \]
\end{itemize}
\end{definition}
From the definition the proof of \cref{thm_biased_infty_lap} $\Leftrightarrow$ \cref{thm_mean_value} is straightforward in the following.

\begin{proof}[Proof of \cref{thm_biased_infty_lap} $\Leftrightarrow$ \cref{thm_mean_value}]
Note that for a $C^2$ function $g$ and $\nabla g (x_0) \not =0$, we have 
as $\ \varepsilon \rightarrow 0$,
$$
g\left(x_0\right)=\frac{ {\max}_{B\left(x_0, \varepsilon\right)}\{g\}+ {\min}_{B\left(x_0, \varepsilon\right)}\{g\}}{2} - \frac{\Delta^N_{\infty} g\left(x_0\right)}{2} \varepsilon^2+o\left(\varepsilon^2\right),$$

$${\max}_{B\left(x_0, \varepsilon\right)}\{g\} = g(x + \epsilon \frac{\nabla g}{|\nabla g|}(x) + o(\epsilon)) = g(x)+\epsilon |\nabla g|(x) + o(\epsilon),$$
$${\min}_{B\left(x_0, \varepsilon\right)}\{g\} = g(x - \epsilon \frac{\nabla g}{|\nabla g|}(x) + o(\epsilon)) = g(x) - \epsilon |\nabla g|(x) + o(\epsilon).$$
Thus 
$$
\epsilon \left( {\max}_{B\left(x_0, \varepsilon\right)}\{g\} - {\min}_{B\left(x_0, \varepsilon\right)}\{g\} \right) 
= 2 \epsilon^2 | \nabla g(x)| + o(\epsilon^2)
$$
Combine together we get
\begin{equation}\label{C^2_biased_mean}
\begin{aligned}
 g\left(x_0\right) &=
\frac{1}{2} \left[ {(1+\beta \epsilon/2) {\max}_{B\left(x_0, \varepsilon\right)}\{g\}+ (1-\beta \epsilon/2){\min}_{B\left(x_0, \varepsilon\right)}\{g\}} \right] \\
&- ({\Delta^N_{\infty} g\left(x_0\right) + \beta |\nabla g(x_0)|}) \varepsilon^2 + o\left(\varepsilon^2\right)
\end{aligned}
\end{equation}

Let $u$ satisfy the biased mean value formula in the viscosity sense:
\[
u(x) = \frac{(1+\tfrac{\beta\varepsilon}{2})\max_{B(x,\varepsilon)}u
+ (1-\tfrac{\beta\varepsilon}{2})\min_{B(x,\varepsilon)}u}{2}
+ o(\varepsilon^2)
\quad \text{as } \varepsilon \to 0.
\]
Let $\psi \in C^2(\Omega)$ touch $u$ from above at $x_0 \in \Omega$, and set
\[
M_\varepsilon := \max_{B(x_0,\varepsilon)} \psi, 
\qquad 
m_\varepsilon := \min_{B(x_0,\varepsilon)} \psi.
\]
Then, by the viscosity interpretation,
\[
\psi(x_0) \le 
\frac{(1+\tfrac{\beta\varepsilon}{2})M_\varepsilon
+ (1-\tfrac{\beta\varepsilon}{2})m_\varepsilon}{2}
+ o(\varepsilon^2).
\]
Rearranging and dividing by $\varepsilon^2$ gives
\begin{equation}\label{eq:main-bias}
0 \le 
\liminf_{\varepsilon \to 0}
\left\{
\frac{1}{\varepsilon^2}
\Big(\tfrac{M_\varepsilon + m_\varepsilon}{2} - \psi(x_0)\Big)
+ \frac{\beta}{4\varepsilon}(M_\varepsilon - m_\varepsilon)
+ o(1)
\right\}.
\end{equation}

\medskip\noindent
\textit{Case 1:} $\nabla \psi(x_0) \neq 0$.
This comes from the above \cref{C^2_biased_mean}.

\medskip\noindent
\textit{Case 2:} $\nabla \psi(x_0) = 0$. By definition $D^2 \psi \geq 0$.

Therefore $u$ is a subsolution.

The supersolution case (test function touching from below) is analogous and yields the reverse inequality.
\end{proof}

\section{Basic property of $\beta$-slope and $\beta$-extension}\label{sec_properties}

\begin{lemma}\label{Family}
    For a family of functions $\mathcal{F}$ such that $\forall u \in \mathcal{F}$, $L^{\beta}_f = L^\beta$, i.e. the $\beta$-exponential constant is the same, then $u_{sup}(x) := \sup_{u\in \mathcal{F}} u(x) $ and $u_{inf} (x) :=\inf_{u\in \mathcal{F}} u (x) $ has $\beta$-slope as $L^{\beta}$ as well.
\end{lemma}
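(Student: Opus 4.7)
The plan is to reduce the claim to a single pointwise two-point inequality. The hypothesis $L^\beta_u(E)\le L^\beta$ is equivalent, by clearing denominators in Definition~\ref{def_biased_slope}, to the affine bound
$$u(y)\;\le\;e^{-\beta|x-y|}\,u(x)\;+\;\tfrac{L^\beta}{\beta}\bigl(1-e^{-\beta|x-y|}\bigr)$$
valid for all $u\in\mathcal{F}$ and all $x,y\in E$ with $x\ne y$. Once this reformulation is in place, the whole lemma comes down to the observation that the coefficient $e^{-\beta|x-y|}$ in front of $u(x)$ is \emph{nonnegative}, so suprema and infima can be pushed through the inequality without changing its direction.

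For $u_{\sup}$, fix $x,y$ and note that for every $u\in\mathcal{F}$ one has $u(x)\le u_{\sup}(x)$; since $e^{-\beta|x-y|}\ge 0$ this upgrades the above bound to
$$u(y)\;\le\;e^{-\beta|x-y|}\,u_{\sup}(x)\;+\;\tfrac{L^\beta}{\beta}\bigl(1-e^{-\beta|x-y|}\bigr),$$
whose right-hand side is independent of $u$. Taking the supremum in $u\in\mathcal{F}$ on the left side yields the same affine bound for $u_{\sup}$, which is precisely $L^\beta_{u_{\sup}}(E)\le L^\beta$. For $u_{\inf}$, I would instead start from $u_{\inf}(y)\le u(y)$, apply the hypothesis to $u$, and then take the infimum over $u\in\mathcal{F}$ on the right-hand side; because $e^{-\beta|x-y|}>0$, the infimum commutes with the affine expression and produces the same bound with $u_{\inf}(x)$ in place of $u(x)$, giving $L^\beta_{u_{\inf}}(E)\le L^\beta$.

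There is no genuine technical obstacle here; the structural feature worth underscoring is that the $\beta$-slope condition encodes an \emph{affine} control of $u(y)$ by $u(x)$ with a strictly positive coefficient, which is exactly the property that makes the class $\{u:L^\beta_u(E)\le L^\beta\}$ closed under arbitrary pointwise suprema and infima (this closure is what will later justify Perron-type constructions of $\beta$-AM extensions). The degenerate case $x=y$ does not need to be treated separately, since the ratio defining the $\beta$-slope is only evaluated at distinct points.
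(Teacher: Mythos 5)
Your proof is correct and takes essentially the same approach as the paper. The only cosmetic difference is that you rearrange the two-point inequality into affine form before pushing the supremum (resp.\ infimum) through, which lets you avoid the paper's explicit $\varepsilon$-approximation argument: since the right-hand side $e^{-\beta|x-y|}u_{\sup}(x) + \tfrac{L^\beta}{\beta}(1-e^{-\beta|x-y|})$ no longer depends on the particular $u\in\mathcal{F}$, taking the supremum of $u(y)$ directly is legitimate and slightly cleaner.
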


\begin{proof}
    Note that $\forall \epsilon>0, \forall x$ there is there is $u_i$ such that $u_{sup}(x) \leq  u_i(x) + \epsilon $, 
    $$
    \frac{u_{sup}(x)-e^{-\beta|x-y|} u_{sup}(y)}{1-e^{-\beta|x-y|}} \beta \leq 
    \frac{u_{i}(x) +\epsilon -e^{-\beta|x-y|} u_i(y)}{1-e^{-\beta|x-y|}} \beta
    \leq L^{\beta} + \frac{\epsilon}{1-e^{-\beta|x-y|}} \beta.
    $$
    As $\epsilon \to 0$, the desired result follows.

    Similarly, $\forall \epsilon>0, \forall x$ there is there is $u_i$ such that $u_{inf}(x) \geq  u_i(x) - \epsilon $, 
    $$
    \frac{u_{inf}(x)-e^{-\beta|x-y|} u_{inf}(y)}{1-e^{-\beta|x-y|}} \beta \leq 
    \frac{u_{i}(x)  -e^{-\beta|x-y|} u_i(y) + e^{-\beta|x-y|} \epsilon}{1-e^{-\beta|x-y|}} \beta
    \leq L^{\beta} + \frac{e^{-\beta|x-y|} \epsilon}{1-e^{-\beta|x-y|}} \beta.
    $$
\end{proof}

\begin{proposition}
    Recall \cref{def_Mchshane_Whit}:
    $$\begin{aligned} 
    & \Psi(g)(x):=\inf _{y \in Y}\left(  e^{-\beta |x-y|}g(y)
    + \frac{L_g(Y)}{\beta} (1-e^{-\beta |x-y|}))\right), \\ 
    & \Lambda(g)(x):=\sup _{y \in Y}
    \left( e^{\beta |x-y|}g(y)
    + \frac{L_g(Y)}{\beta} (1-e^{\beta |x-y|}))
    \right).
    \end{aligned}
    $$

    If $u$ is any extension of $g$ such that $L_u (X) = L_g(Y)$, then $$\Lambda(f) \leq u \leq \Psi(f).$$ For this reason $\Psi(f)$ is called the maximal extension of $f$ and $\Lambda(f)$ is the minimal extension.
\end{proposition}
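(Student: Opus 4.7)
The plan is to unpack the hypothesis $L^{\beta}_u(X)=L^{\beta}_g(Y)=:L$ as a pair of pointwise exponential inequalities valid for any two points of $X$, and then to specialize one of the two points to lie in the boundary set $Y$. Because the definition of the $\beta$-slope is asymmetric in its two arguments (the factor $e^{-\beta|x-y|}$ multiplies $u(x)$ but not $u(y)$), interchanging the roles of the two points produces the upper envelope $\Psi(g)$ on one hand and the lower envelope $\Lambda(g)$ on the other.

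Concretely, I would first record that $L^{\beta}_u(X)\le L$ is equivalent to the statement that for all $a,b\in X$,
$$u(b)\;\le\; e^{-\beta|a-b|}u(a) \;+\; \frac{L}{\beta}\bigl(1-e^{-\beta|a-b|}\bigr).$$
To obtain $u\le\Psi(g)$, I fix $x\in X$ and apply this display with $a=y\in Y$ and $b=x$, replace $u(y)$ by $g(y)$, and take the infimum over $y\in Y$; this is precisely the definition of $\Psi(g)(x)$. To obtain $u\ge\Lambda(g)$, I instead apply the same display with $a=x$ and $b=y\in Y$, again use $u(y)=g(y)$, and then solve for $u(x)$: multiplying through by $e^{\beta|x-y|}$ and rearranging gives
$$u(x)\;\ge\; e^{\beta|x-y|}g(y) \;+\; \frac{L}{\beta}\bigl(1-e^{\beta|x-y|}\bigr),$$
and the supremum over $y\in Y$ is exactly $\Lambda(g)(x)$.

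The argument is essentially algebraic and there is no serious obstacle. The only thing to track carefully is which of the two points carries the asymmetric $e^{-\beta|a-b|}$ factor in the slope bound: putting the boundary point in the role of $a$ yields the $e^{-\beta|x-y|}$ appearing in $\Psi$, while putting it in the role of $b$ and inverting the resulting inequality produces the $e^{+\beta|x-y|}$ appearing in $\Lambda$. Everything else is pure rearrangement, and no appeal to compactness, length-space structure, or finiteness of the infimum/supremum is required beyond the standing assumption $L<\infty$.
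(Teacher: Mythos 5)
Your proof is correct and is essentially the same argument the paper uses: unpack $L^{\beta}_u(X)\le L$ as a pointwise two-sided exponential bound between any $x\in X$ and any $y\in Y$, then take the infimum (for $\Psi$) and supremum (for $\Lambda$). The paper states the two inequalities "it's clear that\dots" and also invokes Lemma~\ref{Family} to record that $\Psi(g)$ and $\Lambda(g)$ are themselves $\beta$-extensions, but that lemma is not needed for the stated sandwich $\Lambda(g)\le u\le\Psi(g)$, and your write-out of the rearrangement is a faithful, fully detailed version of the same computation.
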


\begin{proof}
    It's clear that given any $\beta$-extension $u$, $\forall y, z \in Y$ we have 
    $$
    \left( e^{\beta |x-y|}g(y)
    + \frac{L_g(Y)}{\beta} (1-e^{\beta |x-y|}))
    \right) \leq 
    u(x) \leq \left(  e^{-\beta |x-z|}g(y)
    + \frac{L_g(Y)}{\beta} (1-e^{-\beta |x-z|}))\right)
    $$
    The desired result follows by taking sup on the LHD and inf on the RHS, and \cref{Family}.
\end{proof}

\begin{proposition}\label{equality_cone_on_boundary}
    Cone functions also have the following property: if $W \subset \subset \Omega, C$ is a cone function with vertex $z \notin W$ and slope $a$, and $u \in C(\bar{W})$ satisfies $u=C$ on $\partial W$ and $L_u(W)=|a|$, then $u \equiv C$ on $W$. 
\end{proposition}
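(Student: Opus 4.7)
The plan is to mimic the McShane--Whitney cone rigidity argument from the unbiased theory: I will exploit the fact that on any geodesic ray emanating from the vertex $z$, the $\beta$-exponential cone saturates the $\beta$-slope $|a|$, so that any competitor $u$ with $L^\beta_u(W)=|a|$ and $u=C$ on $\partial W$ must coincide with $C$ at every $x\in W$.

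First I would reduce to the positive-cone case $a>0$ with $C(x)=\tfrac{a}{\beta}(1-e^{-\beta|x-z|})$ (up to an affine normalization of the $B,K$ parameters so that $L^\beta_C(W)=a$; the negative-cone case is handled symmetrically or by passing to $-u$). Fix $x\in W$. Since $z\notin W$ and $W\subset\subset\Omega$ is bounded, I can find an (at worst $\varepsilon$-almost) minimizing geodesic from $z$ passing through $x$ and crossing $\partial W$ at two points $y^\pm\in\partial W$ satisfying the collinearity identities
\[
|z-y^-|+|y^--x|=|z-x|,\qquad |z-y^+|=|z-x|+|x-y^+|,
\]
so that $y^-$ lies between $z$ and $x$ while $y^+$ lies beyond $x$ along the geodesic.

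Second, I would apply the $\beta$-slope bound $L^\beta_u(\bar W)\le a$ (upgraded from $W$ by continuity of $u$) in both orientations. Applied to the ordered pair $(y^-,x)$ and combined with $u(y^-)=C(y^-)$, it gives
\[
u(x)\le e^{-\beta|x-y^-|}C(y^-)+\tfrac{a}{\beta}\bigl(1-e^{-\beta|x-y^-|}\bigr),
\]
and a short algebraic reduction using $|z-y^-|+|y^--x|=|z-x|$ makes the exponential cross-terms collapse and shows the right-hand side equals $C(x)$ exactly. Applied to $(x,y^+)$ with $u(y^+)=C(y^+)$, the slope bound reads
\[
C(y^+)\le e^{-\beta|x-y^+|}u(x)+\tfrac{a}{\beta}\bigl(1-e^{-\beta|x-y^+|}\bigr),
\]
which, after multiplication by $e^{\beta|x-y^+|}$ and the identity $|z-y^+|=|z-x|+|x-y^+|$, yields $u(x)\ge C(x)$. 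Combining the two inequalities gives $u\equiv C$ on $W$.

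The main technical obstacle is securing a geodesic from $z$ through $x$ that extends across $W$ to hit $\partial W$ on the far side. In a compact length space this is immediate by standard Arzel\`a--Ascoli / Hopf--Rinow-type arguments. In a general length space I would work with $\varepsilon$-almost-minimizing paths: for each $\varepsilon>0$ pick a path from $z$ to $x$ of length at most $|z-x|+\varepsilon$ and extend it beyond $x$ until it exits the bounded set $W$; passing to the limit $\varepsilon\downarrow 0$, the sandwich inequalities are preserved thanks to the continuity of $u$ and the smoothness of the exponential weights. An alternative, more streamlined route is to invoke the envelope inequality $\Lambda(u|_{\partial W})\le u\le \Psi(u|_{\partial W})$ from the preceding proposition and verify directly from the definition of $\Psi,\Lambda$ that $\Psi(C|_{\partial W})(x)=\Lambda(C|_{\partial W})(x)=C(x)$, with the infimum and supremum being attained (or approached) exactly at the collinear boundary points $y^\mp$.
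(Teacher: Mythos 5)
Your argument is essentially the same as the paper's: both hinge on choosing two boundary points $y^-$ (between $z$ and the interior point) and $y^+$ (beyond the interior point, collinear with $z$) and using the fact that the exponential cone exactly saturates the $\beta$-slope along radial directions. The paper runs it as a proof by contradiction (``$u(y)>C(y)$ forces $L_u(W)>a$ via the pair $(y^*,y)$; $u(y)<C(y)$ forces it via $(y,y^{**})$''), while you run the logically equivalent direct sandwich; the algebra is the same and you have it right. Your alternative route via the $\Psi/\Lambda$ envelopes would, once you compute the infimum and supremum, reduce to locating the same collinear points, so it is more of a re-packaging than a shortcut.

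One point worth flagging, which applies equally to your write-up and to the paper's: the existence of the ``outer'' boundary point $y^+$ with $d(z,y^+)=d(z,x)+d(x,y^+)$ is not automatic in a general length space. The paper asserts that $y^{**}=\arg\max_{w\in\partial W}\bigl(d(z,w)-d(y,w)\bigr)$ attains the value $d(z,y)$, but this fails, for example, on a circle when $y$ is antipodal to $z$: the function $d(z,\cdot)$ has an interior local maximum at $y$, and no boundary point is collinear in the required sense. Your ``extend the $\varepsilon$-almost geodesic beyond $x$'' step has the same problem -- a nearly-minimizing path from $z$ to $x$ need not extend to a nearly-minimizing path from $z$ through $x$. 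This is fine when $\Omega\subset\mathbb{R}^n$ (the paper's first case) and in many geodesic spaces, but in the general length-space setting both arguments are stated more confidently than the geometry allows, and you should at least record the extendibility requirement as a hypothesis or observe that in the paper's applications $W$ can be taken so that it holds.
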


\begin{proof}
    We may assume $\beta >0$ without loss of generality. Note that the case for $\beta <0$ is the same by switching positive/negative cones. 

    First, we assume $\Omega = \mathbb{R}^n$. 
    
    To establish this, we show that $y \in W$ and $u(y) \neq C(y)$ is impossible. Without loss of generality we assume $a \geq 0$, as for $a<0$ the proof is essentially flip the sign of inequality.
    
    To rule out $u(y)>C(y)$, let $w$ be the vertex of the exponential cone $C$, $y^*, y^{* *}$ be the two points on the boundary such that $w, y, y^*, y^{* *}$ are in the same line. Then

$$
u(y)- e^{-\beta |y-y^*|} u\left(y^*\right)=u(y)- e^{-\beta |y-y^*|} C\left(y^*\right)>C(y)- e^{-\beta |y-y^*|} C\left(y^*\right)
$$
$$
=C_{z}^{+}(y)+B e^{-\beta |y-z|} - e^{-\beta |y-y^*|}\left( C_{z}^{+}(y^*) - B e^{-\beta |y^{*}-w|} \right)  =a (1-e^{-\beta |y-y^*|}).
$$

which shows that $L_u(W)>a$, a contradiction. To rule out $u(y)<C(y)$, argue similarly 
$$
u\left(y^{**}\right)- e^{-\beta |y-y^{**}|} u (y) =C(y^{**})- e^{-\beta |y-y^{**}|} u(y)
$$
$$
>C(y^{**})- e^{-\beta |y-y^{**}|} C\left(y \right)=a (1-e^{-\beta |y-y^{**}|}).
$$
This finishes the proof when $\Omega = \mathbb{R}^n$.

For the general case in length space, note that, given $y \in W$, if we can find $y^*, y^{* *} \in \partial W$ such that $d(y,z) = d(y,y^{*})+ d(y^{* },z)$ and $d(y,z) = - d(y,y^{**}) + d(y^{** },z)$, the above proof works through in the same fashion.

Given any $x\in \partial W$, by triangle inequality we have 
$$
d(z,y) \leq d(z,x) + d(x,y)
$$
$$
d(z,x) - d(y,x)\leq d(z,y) 
$$
We claim that $$y^* = \arg \min_{x\in \partial W} d(z,x) + d(x,y)$$
and $$y^{**} = \arg \max_{x\in \partial W} d(z,x) - d(y,x)$$
will be the desired choice (note the extreme values are achieved due to compactness). I.e. we claim both inequalities are achieved as equalities, respectively by $y*$ and $y^{**}$. If not, then for some $\delta>0$, $d(z,y^{*}) + d(y,y^{*}) - \delta =d(z,y) \leq d(z,y^{*}) + d(y,y^{*})
$, which leads to a contradiction. Note that $d(z,y^{*}) = \inf \{|\gamma|: \gamma: \text{continuous path from $z$ to $y^{*}$} \}$, we also have $d(z,y^{*}) \geq d(z,y)+ d(y,y^{*})$
Similarly, if
$$
d(z,y^{**}) = d(z,y)+ d(y,y^{**}) - \delta 
$$
However, as $d(z,y^{**}) = \inf \{|\gamma|: \gamma: \text{continuous path from $z$ to $y^{**}$} \}$, we also have $d(z,y^{**}) \geq d(z,y)+ d(y,y^{**})$, which leads to a contradiction.
\end{proof}

\subsection{Slope estimate}

\begin{definition}
    We define the positive $\beta$-slope with radius $r$ at $y$ as 
$$
S^{+}_{\beta,u}(y, r):=\max _{\{w:|w-y| \leq r\}} \left( \frac{u(w)- e^{-\beta r}u(y)}{1- e^{-\beta r}} \beta \right).
$$
It's easy to see it's nondecreasing in $r$ for $0<r<\operatorname{dist}(y, Y)$. And $S^{+}_{\beta,u}(y, r) \geq \beta u(y).$ We further define the positive $\beta$-slope at $y$ as the limit
\begin{equation}\label{local_slope}
    S^{+}_{\beta,u}(y):=\lim _{r \downarrow 0} S^{+}_{\beta,u}(y, r)=\inf _{0<r<\operatorname{dist}(y, Y)} S^{+}_{\beta,u}(y, r)
\end{equation}

is therefore well-defined and finite. 

Similarly we define the negative $\beta$-slope with radius $r$ at $y$ as :
\begin{align}\label{negative_slope_by_radius}
    S^{-}_{\beta,u}(y, r)
    &:=\min _{\{w:|w-y| \leq r\}}\left( \frac{e^{-\beta r} u(w)- u(y)}{1- e^{-\beta r}} \beta \right) \\
    &= - \max _{\{w:|w-y| \leq r\}}\left( \frac{u(y) - e^{-\beta r} u(w)}{1- e^{-\beta r}} \beta \right)
\end{align}

And the negative $\beta$-slope at $y$ as the limit
\begin{equation}\label{local_slope_negative}
    S^{-}_{\beta,u}(y):=\lim _{r \downarrow 0} S^{-}_{\beta,u}(y, r)=\inf _{0<r<\operatorname{dist}(y, Y)} S^{-}_{\beta,u}(y, r)
\end{equation}

Again, We shall suppress the bias $\beta$ and function $u$ and write $S^+$ or $S^-$, when there is no confusion.

\end{definition}

\begin{lemma}\label{CECA_convex}
    Let $u \in \mathrm{CECA}(U)$ and $y \in U$. Then
    \begin{equation}\label{maximum_on_sphere}
        \max _{\{w:|w-y|=r\}} u(w)=\max _{\{w:|w-y| \leq r\}} u(w) \quad for \quad 0 \leq r<\operatorname{dist}(y, \partial U)
    \end{equation}

Moreover,
\begin{equation}\label{bound_by_slope}
u(x) \leq  e^{-\beta |x-y|}u(y)+ \left({1- e^{-\beta |x-y|}}\right)  \max _{\{w:|w-y|=r\}} \left( 
\frac{u(w)- e^{-\beta r}u(y)}{1- e^{-\beta r}}
\right)  
\end{equation}
$\text{ for }  |x-y| \leq r<\operatorname{dist}(y, \partial U)$.

And the function
$$
g^{+}(r):=\max _{\{w:|w-y| \leq r\}} u(w)=\max _{\{w:|w-y|=r\}} u(w)
$$
is $\beta$-convex: for $0 \leq s \leq t \leq  r<\operatorname{dist}(y, Y):$ 

\begin{equation}\label{slope_convex_1}
    g^{+}(t) \leq  \frac{e^{\beta(r-t)}-1}{e^{\beta (r -s)}-1} g^{+}(s) + \frac{e^{\beta(r-s)} - e^{\beta(r-t)}}{e^{\beta (r -s)}-1} g^{+}(r)
\end{equation}
Equivalently, 
\begin{equation}\label{slope_convex_2}
    h^{+}(t):=e^{\beta t}g^{+}(t) 
    \leq  \frac{e^{\beta(r-s)} - e^{\beta(t-s)}}{e^{\beta (r -s)}-1} e^{\beta s} g^{+}(s) + \frac{e^{\beta(t-s)} - 1}{e^{\beta (r -s)}-1} e^{\beta r} g^{+}(r)
\end{equation}
Consequently, the function $I^{+}(t) := h^{+}( \frac{\log(1+t)}{\beta} )$ is convex on $0 \leq t<\operatorname{dist}(y, Y)$.

Finally,

\begin{equation}\label{slope_by_radius}
    S^{+}_{\beta,u}(y, r):=\max _{\{w:|w-y|=r\}}\left( \frac{u(w)- e^{-\beta r}u(y)}{1- e^{-\beta r}} \beta \right)
\end{equation}
is nondecreasing in $r, 0<r<\operatorname{dist}(y, Y)$. Also,
\begin{equation}\label{slope_lower_bound}
    S^{+}_{\beta,u}(y, r) \geq \beta u(y).
\end{equation}
\end{lemma}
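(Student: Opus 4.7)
The plan is to derive every assertion from carefully chosen applications of the CECA hypothesis to positive $\beta$-exponential cones on balls around $y$. The ``max on ball equals max on sphere'' identity \eqref{maximum_on_sphere} comes first, and then drives the remaining claims.

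For \eqref{maximum_on_sphere}, I apply CECA on $V=B_{r}(y)$ with any $x_{0}\in\partial B_{r}(y)\subset U\setminus V$ and parameters $A=B=0$, $K=\max_{|w-y|=r}u(w)$. The cone reduces to the constant $K$, which dominates $u$ on $\partial V=\partial B_{r}(y)$ by construction, so CECA yields $u\le K$ on $B_{r}(y)$. For \eqref{bound_by_slope}, I instead apply CECA on $V=B_{r}(y)\setminus\{y\}$, so that $x_{0}=y\notin V$ and $\partial V=\partial B_{r}(y)\cup\{y\}$; I take $A=S^{+}_{\beta,u}(y,r)$, $B=u(y)$, $K=0$. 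At $y$ the required boundary inequality is an equality, and on the sphere the very definition of $S^{+}_{\beta,u}(y,r)$ gives $u(w)\le \frac{A}{\beta}(1-e^{-\beta r})+u(y)e^{-\beta r}$. CECA then propagates \eqref{bound_by_slope} to $V$, and continuity gives it on $\overline{V}$.

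The main step is the $\beta$-convexity of $g^{+}$. Fix $0\le s\le t\le r<\operatorname{dist}(y,\partial U)$, pick $y_{t}$ on the sphere $|w-y|=t$ realizing $g^{+}(t)$ (possible by \eqref{maximum_on_sphere}), and choose a point $p$ on a minimizing path from $y$ to $y_{t}$ with $|p-y|=s$ and $|p-y_{t}|=t-s$. Since $\operatorname{dist}(p,\partial U)\ge r-s$ by the triangle inequality, I may apply \eqref{bound_by_slope} centered at $p$ with radius $r-s$ to the point $x=y_{t}$, which gives
\[
g^{+}(t)\;\le\;e^{-\beta(t-s)}\,u(p)\;+\;\bigl(1-e^{-\beta(t-s)}\bigr)\cdot\frac{M'-u(p)\,e^{-\beta(r-s)}}{1-e^{-\beta(r-s)}},
\]
where $M'=\max_{|w-p|=r-s}u(w)$. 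Writing $\alpha=e^{-\beta(t-s)}$ and $\gamma=e^{-\beta(r-s)}$ this rearranges to $g^{+}(t)\le\frac{(\alpha-\gamma)u(p)+(1-\alpha)M'}{1-\gamma}$, with both coefficients nonnegative since $s\le t\le r$. The triangle inequality shows $\overline{B_{r-s}(p)}\subset\overline{B_{r}(y)}$, so \eqref{maximum_on_sphere} gives $M'\le g^{+}(r)$; and trivially $u(p)\le g^{+}(s)$. Substituting these two bounds and then multiplying numerator and denominator by $e^{\beta(r-s)}$ yields exactly \eqref{slope_convex_1}. The main obstacle is that a general length space need not admit a true minimizing geodesic from $y$ to $y_{t}$, so I work instead with paths of length within $\varepsilon$ of $d(y,y_{t})$, place $p$ at arclength $s$ on such a path (perturbing the relevant distances by $O(\varepsilon)$), and then send $\varepsilon\downarrow 0$ using continuity of $u$ and continuity of the coefficients in the bound.

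The remaining statements are algebraic corollaries. Multiplying \eqref{slope_convex_1} through by $e^{\beta t}$ gives \eqref{slope_convex_2}. Then the substitution $\tau=e^{\beta t}-1$, $\tau_{s}=e^{\beta s}-1$, $\tau_{r}=e^{\beta r}-1$ converts the two weights on $h^{+}(s)$ and $h^{+}(r)$ into the linear interpolants $\frac{\tau_{r}-\tau}{\tau_{r}-\tau_{s}}$ and $\frac{\tau-\tau_{s}}{\tau_{r}-\tau_{s}}$, which is exactly the convexity of $I^{+}(\tau)=h^{+}\!\left(\tfrac{\log(1+\tau)}{\beta}\right)$. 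The identity \eqref{slope_by_radius} is immediate from \eqref{maximum_on_sphere}. For the monotonicity of $S^{+}(y,\cdot)$: given $r_{1}<r_{2}$, apply \eqref{bound_by_slope} at radius $r_{2}$ to any $x$ with $|x-y|=r_{1}$ and rearrange to $\frac{u(x)-u(y)e^{-\beta r_{1}}}{1-e^{-\beta r_{1}}}\le S^{+}(y,r_{2})/\beta$; then take the maximum over the sphere $|x-y|=r_{1}$. Finally for \eqref{slope_lower_bound}: by \eqref{maximum_on_sphere} there exists $w$ with $|w-y|=r$ and $u(w)\ge u(y)$, and plugging this $w$ into the defining max yields $\frac{u(w)-u(y)e^{-\beta r}}{1-e^{-\beta r}}\ge\frac{u(y)-u(y)e^{-\beta r}}{1-e^{-\beta r}}=u(y)$, so $S^{+}(y,r)\ge\beta u(y)$.
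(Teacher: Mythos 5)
Your proof is essentially correct in substance, but it takes a genuinely different route for the central $\beta$-convexity step, and it leaves one small but real gap unaddressed.

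On the route: where the paper establishes \eqref{slope_convex_1} by a single application of CECA, using one positive exponential cone centered at $y$ on the \emph{annulus} $\{s<|x-y|<r\}$ (whose boundary is the two spheres, on which $g^{+}(s)$ and $g^{+}(r)$ dominate $u$), and then simply evaluating the cone at $|x-y|=t$, you instead first derive \eqref{bound_by_slope} and then re-apply it at an intermediate point $p$ on an (approximate) geodesic from $y$ to a point $y_t$ realizing $g^{+}(t)$. Your two-step argument is valid — the coefficients $(\alpha-\gamma)/(1-\gamma)$ and $(1-\alpha)/(1-\gamma)$ are nonnegative, the bound is monotone nondecreasing in $d(p,y_t)$, and the length-space subtlety about nonexistence of exact geodesics is correctly flagged and handled by an $\varepsilon$-approximation — but the paper's annulus-cone argument is shorter and entirely sidesteps geodesic considerations, which is worth noting as the cleaner path. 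Your derivations of the monotonicity of $S^{+}(y,\cdot)$ and of \eqref{slope_lower_bound} match the paper's in spirit; the latter is in fact stated more explicitly in your version than in the paper.

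The gap: when you derive \eqref{bound_by_slope} you set the cone slope $A=S^{+}_{\beta,u}(y,r)$, but the definition of a positive exponential cone requires $A\geq 0$, and $S^{+}_{\beta,u}(y,r)$ can be negative when $u$ is negative near $y$. The paper handles exactly this point by observing that both \eqref{bound_by_slope} and \eqref{slope_convex_1} are invariant under replacing $u$ by $u+M$, so one may assume $u\geq 0$ on the relevant (compact) ball without loss of generality, which forces the slope parameter to be nonnegative (indeed $S^{+}_{\beta,u}(y,r)\geq\beta u(y)\geq 0$). Equivalently, one may shift the constant $K$ in the cone — doing so increases $A$ without changing the value of the cone on the boundary or the resulting bound. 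Your proof needs one of these normalizations to legitimately invoke CECA.
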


\begin{proof}
    Note that \cref{maximum_on_sphere} follows easily by the constant cone function defined as the maximum on the sphere. \cref{bound_by_slope} also immediately follows by \cref{maximum_on_sphere} and CECA.

    Also, note that for any constant $M$, if we replace $u$ by $u+M$, then $g^{+}$ is replaced by $g^{+}+M$. Thus \cref{slope_convex_1} is invariant under adding a constant to $u$. Therefore we may assume $u \geq 0$ without loss of generality. 
    
    We claim that for $0 \leq s\leq|x-y| \leq r<\operatorname{dist}(y, Y)$,
$$
u(x) \leq e^{-\beta (|x-y| -s)}g^{+}(s) + \frac{g^{+}(r)- e^{-\beta (r -s)}g^{+}(s)}{1- e^{-\beta (r -s)}} \left({1- e^{-\beta (|x-y|-s)}}\right)
$$
By assuming $u \geq 0$, $A = \frac{g^{+}(r)- e^{-\beta (r -s)}g^{+}(s)}{1- e^{-\beta (r -s)}} \geq 0$.

Indeed, by the definition of $g^{+}$, the cone function on the right bounds $u$ above on the boundary of the annular region $s<|x-y|<r$.
If $|x-y|= t$, we have RHS equals
$$
\frac{e^{-\beta (t -s)}- e^{-\beta (r -s)}}{1- e^{-\beta (r -s)}} g^{+}(s)
+
\frac{1- e^{-\beta (t -s)}}{1- e^{-\beta (r -s)}} g^{+}(r)
$$
Optimizing over $|x-y|= t$ gives
\begin{align}
g^{+}(t) &\leq  \frac{e^{-\beta (t -s)}- e^{-\beta (r -s)}}{1- e^{-\beta (r -s)}} g^{+}(s)
+
\frac{1- e^{-\beta (t -s)}}{1- e^{-\beta (r -s)}} g^{+}(r) \\
&=  \frac{e^{\beta(r-t)}-1}{e^{\beta (r -s)}-1} g^{+}(s) + \frac{e^{\beta(r-s)} - e^{\beta(r-t)}}{e^{\beta (r -s)}-1} g^{+}(r)
\end{align}

Note that by \cref{bound_by_slope}, the monotonicity follows easier by optimizing over $|x-y|=s\leq r$. 
Another proof of the monotonicity is given by the $\beta$-convexity: by setting $s=0$ in \cref{slope_convex_2} we have, for $ 0<t \leq r$
$$ \frac{1}{\beta} S^{+}_{\beta,u}(y,t) 
= \frac{g^{+}(t) - e^{-\beta t}g^{+}(0)}{1-e^{-\beta t}} 
\leq \frac{g^{+}(r) - e^{-\beta r}g^{+}(0)}{1-e^{-\beta r}} 
= \frac{1}{\beta} S^{+}_{\beta,u}(y,r) $$

Note that \cref{slope_lower_bound} is simply given by letting $w \to y$ as $r \downarrow 0$ and the above monotonicity.

\end{proof}

We have the following lemma analogues to above: 

\begin{lemma}
    Let $u \in \mathrm{CECB}(U)$ and $y \in U$. Then
    \begin{equation}\label{min_on_sphere}
        \min_{\{w:|w-y|=r\}} u(w)=\min _{\{w:|w-y| \leq r\}} u(w) \quad for \quad 0 \leq r<\operatorname{dist}(y, Y)
    \end{equation}

Moreover,
\begin{equation}\label{bound_by_slope_negative}
u(x) \geq  e^{\beta |x-y|}u(y)
+ \left({ 1- e^{\beta |x-y|}}\right)
\max _{\{w:|w-y|=r\}}\left( \frac{u(w) - e^{\beta r} u(y)}{1- e^{\beta r}}  \right)
\end{equation}
$\text{ for }  |x-y| \leq r<\operatorname{dist}(y, Y)$.

And the function
$$
g^{-}(r):=\min _{\{w:|w-y| \leq r\}} u(w)=\min _{\{w:|w-y|=r\}} u(w)
$$
is $\beta$-concave: for $0 \leq s \leq t \leq  r<\operatorname{dist}(y, Y):$ 

\begin{equation}\label{slope_convex_1}
    g^{-}(t) 
    \geq  \frac{e^{-\beta(r-t)}-1}{e^{-\beta (r -s)}-1} g^{+}(s) + \frac{e^{-\beta(r-s)} - e^{-\beta(r-t)}}{e^{-\beta (r -s)}-1} g^{+}(r)
\end{equation}
Equivalently, 
\begin{equation}\label{slope_convex_2}
    h^{-}(t) := e^{- \beta t}g^{-}(t) \geq  
    \frac{e^{-\beta(r-s)} - e^{-\beta(t-s)}}{e^{-\beta (r -s)}-1} e^{-\beta s} g^{+}(s) + \frac{e^{-\beta(t-s)} - 1}{e^{-\beta (r -s)}-1} e^{-\beta r} g^{+}(r)
\end{equation}

Consequently, the function $I^{-}(t) := h^{-}( \frac{\log(1+t)}{\beta} )$ is concave on $0 \leq t<\operatorname{dist}(y, Y)$.

Finally,

\begin{equation}
    S^{-}_{\beta,u}(y, r):= \max _{\{w:|w-y|=r\}}\left( \frac{u(w) - e^{\beta r} u(y)}{1- e^{\beta r}} \beta  \right)
\end{equation}
is nonincreasing in $r, 0<r<\operatorname{dist}(y, Y)$, and 
\begin{equation}\label{negative_slope_upperbound}
    S^{-}_{\beta,u}(y, r) \leq \beta u(y).
\end{equation}

\end{lemma}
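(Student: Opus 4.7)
The proof is a direct mirror of the preceding CECA lemma, with positive cones replaced by negative cones and all inequalities reversed. My plan is to apply CECB four times with increasingly refined test cones on the punctured ball and then on annular regions.

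First I would dispose of the identity $\min_{|w-y|=r} u(w) = \min_{|w-y|\le r} u(w)$ by applying CECB with a trivial constant cone $A=B=0$, $K=\min_{|w-y|=r}u(w)$ on the ball $V=B_r(y)$ (any vertex $x_0$ outside $V$ works since the cone does not depend on $x_0$). Since $u\ge K$ on $\partial V$, CECB yields $u\ge K$ throughout $V$, which combined with the trivial inclusion gives equality and shows the function $g^-(r)$ is well defined on either the sphere or the closed ball.

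Next I would establish \eqref{bound_by_slope_negative}. The key is that any function of the form $\alpha+\gamma e^{\beta|x-y|}$ can be rewritten as $C^{-}_{y}+Be^{\beta|x-y|}+K$ for appropriate $A\ge 0, B, K$, so CECB can be invoked with such a cone centered at $y$. I apply CECB on the punctured ball $V=B_r(y)\setminus\{y\}$ (with $x_0=y\in X\setminus V$) using the cone $\alpha+\gamma e^{\beta|x-y|}$ fitted so that it equals $u(y)$ at $y$ and equals $g^-(r)$ on the sphere $|x-y|=r$: explicitly $\gamma=(g^-(r)-u(y))/(e^{\beta r}-1)$ and $\alpha=u(y)-\gamma$. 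By the previous step the cone lies below $u$ on $\partial V$, so CECB extends the inequality to all of $V$. Rewriting the resulting bound and using that dividing by the negative quantity $1-e^{\beta r}$ turns a ``$\min_w$'' into a ``$\max_w$'' recovers exactly \eqref{bound_by_slope_negative}; from there \eqref{min_on_sphere} and \eqref{bound_by_slope_negative} immediately give \eqref{negative_slope_upperbound} by letting $w\to y$ as $r\downarrow 0$.

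For the $\beta$-concavity of $g^-$, I repeat the same cone-fitting trick on the annular region $V=\{s<|x-y|<r\}$, again with $x_0=y$ outside $V$. Solve for $\alpha,\gamma$ so that $\alpha+\gamma e^{\beta s}=g^-(s)$ and $\alpha+\gamma e^{\beta r}=g^-(r)$. The already established equality $\min_{|w-y|=\rho}u=g^-(\rho)$ at $\rho\in\{s,r\}$ shows the cone lies below $u$ on $\partial V$, hence by CECB on all of $V$. Taking the minimum over $|x-y|=t$ yields, with $S=e^{\beta s},T=e^{\beta t},R=e^{\beta r}$,
\[
g^-(t)\ \ge\ \frac{R-T}{R-S}\,g^-(s)+\frac{T-S}{R-S}\,g^-(r),
\]
which is the stated $\beta$-concavity after the elementary rescaling that produces the $e^{-\beta(\cdot)}$ form in \eqref{slope_convex_2}. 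Concavity of $I^-$ then drops out from the affine change of variable $\tau=e^{\beta\xi}-1$ applied to the identity $h^-(\xi)=g^-(\xi)/T$, which turns $\beta$-concavity of $g^-$ into ordinary concavity of $I^-$ in $\tau$; the monotonicity of $S^-_{\beta,u}(y,r)$ is then the $s=0$ case of the same inequality, exactly parallel to the final lines of the CECA lemma.

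The only nontrivial bookkeeping is confirming that the vertex $y$ legitimately satisfies $y\in X\setminus V$ for both the punctured ball and the annulus, so CECB can be applied with a cone centered there; this is trivial by construction. The main conceptual obstacle in the CECB case is the sign flip in the definition of $S^-$ relative to the CECA case: because $1-e^{\beta r}<0$, writing things as a maximum requires reversing an inner min into an outer max, and one must keep careful track of this when translating the cone inequality $u\ge\alpha+\gamma e^{\beta|x-y|}$ into the form stated in \eqref{bound_by_slope_negative}. Everything else is an algebraic symmetry of the CECA argument.
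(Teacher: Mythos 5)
Your proposal is correct and follows essentially the same route as the paper: apply CECB with a constant cone to get equality of the min over the sphere and ball, then fit an exponential cone of the form $\alpha+\gamma e^{\beta|x-y|}$ (centered at $y\notin V$) to $g^-$ on the boundary of an annular region $\{s<|x-y|<r\}$ and invoke CECB to deduce the $\beta$-concavity, from which the monotonicity and upper bound for $S^-_{\beta,u}(y,\cdot)$ follow by the $s=0$ case. The only cosmetic difference is that you treat the $s=0$ punctured ball and the general annulus as two separate applications of CECB, whereas the paper handles both at once with a single annulus argument allowing $s\ge 0$; the substance is identical.
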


\begin{proof}
    Note that \cref{min_on_sphere}, \cref{bound_by_slope_negative} follows similarly as in the previous lemma.

    We claim that for $0 \leq s\leq|x-y| \leq r<\operatorname{dist}(y, Y)$,

$$
u(x) \geq e^{\beta (|x-y| -s)}g^{-}(s)
+ \frac{g^{-}(r) - e^{\beta (r -s)} g^{-}(s) }{1- e^{\beta (r -s)}} \left({ 1- e^{\beta (|x-y|-s)}} \right)
$$
Similarly as before, we may assume $u \geq 0$ without loss of generality, thus
$A = \frac{g^{-}(r) - e^{\beta (r -s)} g^{-}(s) }{1- e^{\beta (r -s)}} \geq 0.$

Indeed, by the definition of $g^{-}$, the cone function on the right bounds $u$ below on the boundary of the annular region $s<|x-y|<r$.
If $|x-y|= t$, we have RHS equals
$$
\frac{e^{\beta (t -s)} -e^{\beta (r -s)} }{1-e^{\beta (r -s)}} g^{-}(s)
+
\frac{1-e^{\beta (t -s)} }{1-e^{\beta (r -s)}} g^{-}(r)
$$
Optimizing over $|x-y|=t$, we see $g^-$ is $\beta$-concave
\begin{align*}
    g^{-}(t) &\geq \frac{e^{\beta (t -s)} -e^{\beta (r -s)} }{1-e^{\beta (r -s)}} g^{-}(s)
+
\frac{1-e^{\beta (t -s)} }{1-e^{\beta (r -s)}} g^{-}(r) \\
& = \frac{e^{-\beta(r-t)}-1}{e^{-\beta (r -s)}-1} g^{+}(s) + \frac{e^{-\beta(r-s)} - e^{-\beta(r-t)}}{e^{-\beta (r -s)}-1} g^{+}(r). 
\end{align*}

Again by \cref{min_on_sphere} \cref{bound_by_slope_negative},  the monotonicity follows easily by optimizing over $|x-y|=s\leq r$. Another proof of the monotonicity is given by the $\beta$-concavity, in the same fashion as in the previous lemma.

\cref{negative_slope_upperbound} is simply given by letting $w \to y$ as $r \downarrow 0$ and the above monotonicity.
\end{proof}

The following is a Harnack-type inequality, which establishes the local uniform continuity of $u$:

\begin{lemma}\label{Harnack_lemma}
    Let $u \in C(U)$ satisfy \cref{bound_by_slope}, then for $z \in U$, $x,y \in B_R (z)$ and  $R < d(z, Y)/4$, we have

\begin{equation}\label{Harnack_1}
    \sup_{B_R(z)} u  - \sup_{B_{4R}(z)} u
     \leq  \frac{e^{R \beta} - 1}{e^{3R \beta} -1}  (\inf_{B_R(z)} u -\sup_{B_{4R}(z)} u)
\end{equation}

and 
\begin{equation}\label{slope_Harnack}
    \frac{u(x) -  u(y)e^{-\beta |x-y|}}{1- e^{-\beta |x-y|}} \beta
\leq\beta\sup_{B_{4R}(z)} u+ \left( \sup_{B_{4R}(z)} u - \sup_{B_R(z)} u \right) \frac{\beta}{e^{R \beta}-1}
\end{equation}

If $u$ is non-positive, the above two reduce to:

\begin{equation}\label{Harnack_nonpositive}
    \sup_{B_R(z)} u  
     \leq  \frac{e^{R \beta} - 1}{e^{3R \beta} -1} \inf_{B_R(z)} u 
\end{equation}

\begin{equation}\label{slope_Harnack_nonpositive}
    \frac{u(x) -  u(y)e^{-\beta |x-y|}}{1- e^{-\beta |x-y|}} \beta
\leq  \left( - \sup_{B_R(z)} u \right) \frac{\beta}{e^{R \beta}-1}
\end{equation}
\end{lemma}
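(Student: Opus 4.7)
The plan is to apply the slope estimate \cref{bound_by_slope} at the single scale $r = 3R$ and extract both inequalities from one algebraic rearrangement. The hypothesis $R < d(z,Y)/4$ forces $\operatorname{dist}(y, Y) > 3R$ for every $y \in B_R(z)$, so $r = 3R$ is always admissible. For any $x, y \in B_R(z)$ we have $|x-y| \leq 2R < 3R$, and the sphere $\{w : |w-y|=3R\}$ sits inside $\overline{B_{4R}(z)}$, so $\max_{|w-y|=3R} u(w) \leq M := \sup_{B_{4R}(z)} u$. Substituting this into \cref{bound_by_slope} and writing $\alpha := e^{-\beta|x-y|}$, $\gamma := e^{-3R\beta}$, a short regrouping yields the key inequality
\[
(u(x) - M)(1-\gamma) \;\leq\; (\alpha - \gamma)\,(u(y) - M) \qquad (\star)
\]
whose crucial feature is that both sides involve the non-positive quantity $u - M$.

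For \cref{Harnack_1} I pick sequences $x_n, y_n \in B_R(z)$ with $u(x_n) \to \sup_{B_R(z)} u$ and $u(y_n) \to \inf_{B_R(z)} u$. Because $|x_n - y_n| \leq 2R$ we have $\alpha_n \geq e^{-2R\beta}$, so a one-line check gives $\frac{\alpha_n - \gamma}{1-\gamma} \geq \frac{e^{-2R\beta} - e^{-3R\beta}}{1 - e^{-3R\beta}} = \frac{e^{R\beta}-1}{e^{3R\beta}-1}$. Since $u(y_n) - M \leq 0$, replacing the coefficient in $(\star)$ by its smaller lower bound only enlarges the right-hand side, so $(\star)$ still holds with $\frac{e^{R\beta}-1}{e^{3R\beta}-1}$ in place of $\frac{\alpha_n - \gamma}{1-\gamma}$, and the limit gives \cref{Harnack_1}.

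For \cref{slope_Harnack} I divide \cref{bound_by_slope} through by $1 - e^{-\beta|x-y|}$, obtaining, for every $x, y \in B_R(z)$,
\[
\frac{u(x) - u(y)e^{-\beta|x-y|}}{1 - e^{-\beta|x-y|}} \;\leq\; \frac{M - e^{-3R\beta} u(y)}{1 - e^{-3R\beta}} \;=\; M + \frac{M - u(y)}{e^{3R\beta}-1}.
\]
Replacing $u(y)$ by the worst case $\inf_{B_R(z)} u$ and invoking the rearranged form of \cref{Harnack_1}, namely $M - \inf_{B_R(z)} u \leq \frac{e^{3R\beta}-1}{e^{R\beta}-1}\bigl(M - \sup_{B_R(z)} u\bigr)$, collapses the factor $(e^{3R\beta}-1)^{-1}$ into $(e^{R\beta}-1)^{-1}$, and multiplying by $\beta$ produces \cref{slope_Harnack}.

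The non-positive special cases \cref{Harnack_nonpositive} and \cref{slope_Harnack_nonpositive} follow by running the same argument with $M$ replaced by $0$: when $u \leq 0$ on $\overline{B_{4R}(z)}$, the bound $\max_{|w-y|=3R} u(w) \leq 0$ remains valid, so the whole derivation goes through with $M = 0$. The main technical obstacle throughout is careful sign-tracking in $(\star)$, since the correct direction of the final inequalities depends on the non-positivity of $u - M$ interacting properly with the positive coefficient $\frac{\alpha - \gamma}{1 - \gamma}$; once this is respected, the rest is elementary monotonicity and the chaining through \cref{Harnack_1}.
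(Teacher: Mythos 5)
Your proof is correct and follows essentially the same route as the paper: both derive the inequalities from \cref{bound_by_slope} by bounding $\max_{|w-y|=r}u(w)$ by $M=\sup_{B_{4R}(z)}u$ (or $0$ in the non-positive case) and then rearranging. The paper proves the non-positive case first using $r\uparrow d(y,Y)$ and then shifts $u\mapsto u-M$ for the general case, whereas you work with general $u$ and the fixed admissible radius $r=3R$ directly and then specialize $M=0$; these are algebraically equivalent orderings of the same computation, and your $(\star)$ is exactly the paper's central estimate written for the shifted function.
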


\begin{proof}
We first assume $u$ is non-positive.
    $$
u(x) \leq  u(y) \left(e^{-\beta |x-y|} -  
\frac{ e^{-\beta r}}{1- e^{-\beta r}}  \left({1- e^{-\beta |x-y|}}\right)
\right) 
    $$
    $$
u(x) -  u(y)e^{-\beta |x-y|} \leq  - u(y) \left(  
\frac{ e^{-\beta r}}{1- e^{-\beta r}}  \left({1- e^{-\beta |x-y|}}\right)
\right) 
    $$
    Let $r \uparrow d(y)$. Thus $d(y, Y) \geq 3 R, |x-y| \leq 2 R$, we have 
    $$
u(x) - e^{-2R \beta} u(y) \leq u(x) -  u(y)e^{-\beta |x-y|} \leq  - u(y) \left(  
\frac{1}{e^{3R \beta } - 1}  \left({1- e^{-2R \beta}}\right)
\right) 
    $$
    Thus we have 
    $$
    u(x) \leq u(y) e^{-2R \beta}\left( 1- \frac{e^{2R \beta} - 1}{e^{3R \beta} -1} \right)
    $$
    and 
    \begin{align}
    \sup_{B_R(z)} u &\leq e^{-2R \beta}\left( 1- \frac{e^{2R \beta} - 1}{e^{3R \beta} -1} \right) \inf_{B_R(z)} u \\
    & =  \frac{e^{R \beta} - 1}{e^{3R \beta} -1}  \inf_{B_R(z)} u 
    \end{align}
    for $R < d(z, Y)/4$.

    $$
u(x) -  u(y)e^{-\beta |x-y|} \leq  - \inf_{B_R(z)} u \left(  
\frac{ e^{-\beta d(y)}}{1- e^{-\beta d(y)}}  \left({1- e^{-\beta |x-y|}}\right)
\right) 
    $$

\begin{equation}\label{slope_Harnack}
    \frac{u(x) -  u(y)e^{-\beta |x-y|}}{1- e^{-\beta |x-y|}} \leq  - \inf_{B_R(z)} u \frac{1}{e^{3R \beta}- 1}
\leq - \sup_{B_R(z)} u \frac{1}{e^{R \beta}-1}
\end{equation}
If $u$ is not non-positive, \cref{slope_Harnack} holds with $u$ replaced by $u - \sup_{B_{4R}(z)} u$, so we end up with 
\begin{equation}\label{slope_Harnack}
    \frac{u(x) -  u(y)e^{-\beta |x-y|}}{1- e^{-\beta |x-y|}} 
\leq\sup_{B_{4R}(z)} u+ \left( \sup_{B_{4R}(z)} u - \sup_{B_R(z)} u \right) \frac{1}{e^{R \beta}-1}
\end{equation}
\end{proof}

\begin{lemma}
    Let $\mathcal{F} \subset C(U)$ be a family of functions that enjoy comparison with exponential cones from above in $U$. Suppose 
    $$
    h(x) = \sup_{v \in \mathcal{F}}v(x)
    $$
    is finite and locally bounded above in $U$. Then $h \in C(U)$, and it enjoys comparison with exponential cones from above in $U$.
\end{lemma}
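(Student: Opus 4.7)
The plan is to prove the two conclusions separately: first the transfer of CECA to $h$ (which is formal and does not use continuity), and then the continuity of $h$ itself (which uses the slope bound for each $v \in \mathcal{F}$ from \cref{CECA_convex} together with the local upper bound on $h$).

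For the CECA step, fix any open $V \subset U$, any $x_0 \in U \setminus V$, and constants $B, K \in \mathbb{R}$, and suppose
\[
h(x) \;\leq\; C_{x_0}^{+}(x) + B e^{-\beta|x-x_0|} + K \qquad \text{on } \partial(V \setminus \{x_0\}).
\]
Since $v \leq h$ pointwise for every $v \in \mathcal{F}$, the same inequality holds for each $v$ on that boundary. Applying CECA for $v$ propagates the bound to all of $V$, and then taking $\sup_{v \in \mathcal{F}}$ transfers the bound back to $h$ on $V$. No continuity is needed here.

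For continuity, lower semicontinuity of $h$ is automatic, since a supremum of continuous functions is LSC: given $\epsilon > 0$, pick $v \in \mathcal{F}$ with $v(x_0) > h(x_0) - \epsilon$, and use continuity of $v$. The upper semicontinuity is the main content. Fix $y \in U$ and $s > 0$ small enough that $\overline{B_s(y)} \subset U$, and set $M_s := \sup_{B_s(y)} h < \infty$. For each $v \in \mathcal{F}$, \cref{bound_by_slope} gives, for $|x-y| \leq s$,
\[
v(x) \;\leq\; e^{-\beta|x-y|} v(y) + \bigl(1-e^{-\beta|x-y|}\bigr) \max_{|w-y|=s} \frac{v(w) - e^{-\beta s} v(y)}{1 - e^{-\beta s}}.
\]
Bounding $\max_{|w-y|=s} v(w) \leq M_s$ (uniformly in $v$) and simplifying, this becomes $v(x) \leq a\, v(y) + b\, M_s$ with
\[
a \;=\; \frac{e^{-\beta|x-y|} - e^{-\beta s}}{1-e^{-\beta s}}, \qquad
b \;=\; \frac{1 - e^{-\beta|x-y|}}{1 - e^{-\beta s}},
\]
and one checks $a, b \geq 0$ with $a + b = 1$ whenever $|x-y| \leq s$. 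Since $a \geq 0$, taking $\sup_{v \in \mathcal{F}}$ preserves the inequality and yields
\[
h(x) \;\leq\; a\, h(y) + b\, M_s.
\]
Letting $x \to y$ with $s$ fixed sends $a \to 1$ and $b \to 0$, hence $\limsup_{x \to y} h(x) \leq h(y)$.

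The main obstacle is precisely this USC step. The challenge is to recast the slope bound for each $v$ as a genuine convex combination of $v(y)$ and an upper bound $M_s$ independent of $v$, because only then does the supremum commute with the inequality. The nonnegativity $a \geq 0$—which is what forces the restriction $|x-y| \leq s$—is the algebraic fact that makes the whole argument go through; without it, taking $\sup_v$ of the right-hand side would not reduce cleanly to $a\, h(y) + b\, M_s$.
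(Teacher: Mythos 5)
Your proof is correct, and the CECA step is essentially the same as the paper's (each $v\in\mathcal F$ inherits the cone bound from $h$ on the boundary, propagates it by CECA, and the supremum transfers it back). For continuity, however, you take a genuinely different route. The paper first normalizes the family by replacing $\mathcal F$ with $\hat{\mathcal F}=\{\max(v,v_0):v\in\mathcal F\}$ so that every function is locally bounded above (by $h$) and below (by $v_0$), and then invokes \cref{Harnack_lemma} to conclude that $\hat{\mathcal F}$ is locally \emph{equicontinuous}; continuity of $h$ then follows since a supremum of a locally equicontinuous, locally bounded family is continuous. You instead split continuity into LSC (free, as a sup of continuous functions) and USC, and prove USC directly by rewriting the slope bound \cref{bound_by_slope} as a convex combination $v(x)\le a\,v(y)+b\,M_s$ with $a,b\ge 0$, $a+b=1$, and $M_s$ a uniform local majorant of $h$; because $a\ge 0$, the inequality survives the supremum and gives $h(x)\le a\,h(y)+b\,M_s\to h(y)$ as $x\to y$. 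This is more elementary: it avoids the lower-boundedness normalization and the Harnack lemma, using only the one-sided estimate that CECA directly provides. The trade-off is that the paper's route yields the stronger conclusion of local equicontinuity of the family, which is useful elsewhere, whereas yours proves exactly what the lemma asks for. Both are sound; your observation that the nonnegativity $a\ge 0$ (equivalently $|x-y|\le s$) is the pivot that makes the supremum commute is precisely the right thing to isolate.
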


\begin{proof}
    First, choose $v_0 \in \mathcal{F}$ and notice that we may replace $\mathcal{F}$ by

$$
\hat{\mathcal{F}}=\left\{\max \left(v, v_0\right): v \in \mathcal{F}\right\}
$$

without changing $h$. Moreover, $\hat{\mathcal{F}} \subset \operatorname{CECA}(U)$ (see below). We are reduced to the case in which the functions in $\mathcal{F}$ are all locally bounded above (by $h$ ) and below (by $v_0$ ). The continuity of $h$ now follows from Lemma \cref{Harnack_lemma}, which implies that $\mathcal{F}$ is locally equicontinuous. Suppose that $V \subset \subset U$ and $z \notin V$. For $v \in \mathcal{F}$ we have, by assumption and the definition of $h$,

$$
v(x)-a(1-e^{- \beta |x-z|} ) - b e^{- \beta |x-z|} \leq \max _{w \in \partial V}(v(w)- a(1-e^{- \beta |w-z|})  - b e^{- \beta |w-z|} ) 
$$
$$
\leq \max _{w \in \partial V}(h(w)- a(1-e^{- \beta |w-z|}) - b e^{- \beta |w-z|} )
$$
for $x \in V$. Taking the supremum over $v \in \mathcal{F}$ on the left hand side, we find that $h$ enjoys comparison with exponential cones from above.
\end{proof}

\subsection{Local Maxima imply locally constant}
We shall prove the following maximum principle, for functions satistifying the following weak version of \cref{bound_by_slope} :

\begin{equation}\label{bound_by_slope_weak}
u(x) \leq  e^{-\beta |x-y|}u(y)+\max _{\{w:|w-y| \leq r\}}\left( 
\frac{u(w)- e^{-\beta r}u(y)}{1- e^{-\beta r}}
\right)  \left({1- e^{-\beta |x-y|}}\right)
\end{equation}
$\text{ for }  |x-y| \leq r<\operatorname{dist}(y, Y)$.

\begin{lemma}
 Let $U$ be connected, $u \in C(U)$ and \cref{bound_by_slope_weak} hold for $x, y \in U$ with $|x-y| \leq r<\operatorname{dist}(y, Y)$. Let $\hat{x} \in U$ and $u(\hat{x}) \geq u(w)$ for $w \in U$. Then $u \equiv u(\hat{x})$ in $U$.
\end{lemma}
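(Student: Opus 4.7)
The approach is the standard open--closed dichotomy on the maximum set. Set $M := u(\hat{x})$; by hypothesis $u \leq M$ everywhere on $U$, and define $S := \{x \in U : u(x) = M\}$. Continuity of $u$ gives that $S$ is closed in $U$, and $\hat{x} \in S$ so $S$ is nonempty. By connectedness of $U$, it suffices to show $S$ is open.

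To prove openness, fix $y \in S$ and take any $x \in U$ with $|x - y| < \tfrac{1}{2}\operatorname{dist}(y, Y)$. The triangle inequality gives $\operatorname{dist}(x, Y) > |x-y|$, so one can pick $r \in \bigl(|x-y|,\,\operatorname{dist}(x, Y)\bigr)$. The key move is to apply \cref{bound_by_slope_weak} with the roles of the two points \emph{swapped}: use $x$ as the base point and $y$ as the target. Since $u \leq M$ on $U$ and $y \in B_r(x)$ with $u(y) = M$, the interior maximum is exactly $\max_{|w-x|\leq r} u(w) = M$. Substituting, \cref{bound_by_slope_weak} reads
\begin{equation*}
M \;=\; u(y) \;\leq\; e^{-\beta|x-y|}\, u(x) \;+\; \bigl(1 - e^{-\beta|x-y|}\bigr)\cdot\frac{M - e^{-\beta r}\, u(x)}{1 - e^{-\beta r}}.
\end{equation*}
Writing $a := e^{-\beta|x-y|}$ and $b := e^{-\beta r}$ (so $a > b$ because $|x-y| < r$) and clearing denominators, the elementary identity $a(1-b) - b(1-a) = a - b$ collapses the inequality to $(a - b)\, M \leq (a - b)\, u(x)$, hence $u(x) \geq M$. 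Combined with $u(x) \leq M$ this gives $u(x) = M$, i.e.\ $x \in S$, so $S$ is open.

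The only subtle point is that \cref{bound_by_slope_weak} is an \emph{upper} bound on $u$, so extracting the desired lower bound $u(x) \geq M$ near a maximizer is not immediate; direct application with $y$ as base point only reproduces the trivial $u(x)\leq M$. The trick is to let the maximizer $y$ play the role of the \emph{target} rather than the base point, so that $u(y) = M$ appears on the left-hand side and forces $u(x)$ to be large on the right. Once this is set up, everything reduces to the bookkeeping identity above, and the proof closes by connectedness of $U$.
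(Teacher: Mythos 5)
Your proof is correct and follows essentially the same route as the paper: both arguments run the open--closed dichotomy on the maximum set, and both obtain openness by applying \cref{bound_by_slope_weak} with the maximizer as the \emph{target} (left-hand side) and the nearby variable point as the \emph{base} (center of the test ball), then substituting $\max_{B_r} u \leq M$ and simplifying. The paper carries out the algebra by grouping the coefficient of $u(z)$ directly rather than via your $a(1-b) - b(1-a) = a-b$ shortcut, but the computation is the same.
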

\begin{proof}
The set $\{x \in U: u(x)=u(\hat{x})\}$ is obviously closed in $U$. We show then that it is also open; since $U$ is assumed to be connected, the result follows. It suffices to show that $u$ is constant on some ball containing $\hat{x}$. If $z \in U$ and $|z-\hat{x}| \leq s<$ $\operatorname{dist}(z, Y),$ \cref{bound_by_slope_weak} with $x=\hat{x}, y=z$ and $r=s$ yields

$$
\begin{aligned}
u(\hat{x}) &  \leq \left( e^{-\beta |\hat{x}-z|} - \frac{1-e^{-\beta |\hat{x}-z|}}{e^{\beta s}-1} \right) u(z)
+ \left(\max _{\{w:|w-z| \leq s\}} u(w)\right) \frac{1-e^{-\beta |\hat{x}-z|}}{1-e^{-\beta s}} \\
& \leq \left( e^{-\beta |\hat{x}-z|} - \frac{1-e^{-\beta |\hat{x}-z|}}{e^{\beta s}-1} \right) u(z)
+ u(\hat{x}) \frac{1-e^{-\beta |\hat{x}-z|}}{1-e^{-\beta s}}
\end{aligned}
$$
provided that $u(w) \leq u(\hat{x})$ for $|w-z| \leq s$.

Note that this implies 
$$
\frac{e^{-\beta |\hat{x}-z|} - e^{-\beta s}}{1-e^{-\beta s}} u(\hat{x}) 
\leq 
\frac{e^{\beta (s-|\hat{x}-z|)} - 1}{e^{\beta s} -1 } u(z)
= \frac{e^{-\beta |\hat{x}-z|} - e^{-\beta s}}{1-e^{-\beta s}} u(z)
$$
So 
$$
u(\hat{x}) \leq u(z).
$$
This last condition holds, by assumption, in a neighborhood of $\hat{x}$. Thus for the $z$ 's satisfying all of the above requirements, which clearly cover a neighborhood of $\hat{x}, u(z) \leq u(\hat{x}) \leq u(z)$.

\end{proof}

This implies that if $u$ satisfies \cref{bound_by_slope} (in particular, if $u \in$ $\mathrm{CECA}(U))$ and has a local maximum at some point $\hat{x} \in U$, then it is constant in any connected neighborhood of $\hat{x}$ for which $\hat{x}$ is a maximum point. In particular, \cref{maximum_on_sphere} holds. 

\subsection{Further consequence}

We have shown that 
$$
S^{+}_{\beta,u}(y, r):=\max _{\{w:|w-y|=r\}} \left( \frac{u(w)- e^{-\beta r}u(y)}{1- e^{-\beta r}} \beta \right)
$$
is nondecreasing in $r$ for $0<r<\operatorname{dist}(y, Y)$. And $S^{+}_{\beta,u}(y, r) \geq \beta u(y).$ The limit
\begin{equation}\label{local_slope}
    S^{+}_{\beta,u}(y):=\lim _{r \downarrow 0} S^{+}_{\beta,u}(y, r)=\inf _{0<r<\operatorname{dist}(y, Y)} S^{+}_{\beta,u}(y, r)
\end{equation}

is therefore well-defined and finite. We shall call it "positive slope".

Similarly we define the negative slope:

\begin{equation}\label{local_slope_negative}
    S^{-}_{\beta,u}(y):=\lim _{r \downarrow 0} S^{-}_{\beta,u}(y, r)=\sup _{0<r<\operatorname{dist}(y, Y)} S^{-}_{\beta,u}(y, r)
\end{equation}

We have:

\begin{lemma}
     Let $u \in C(U)$ satisfy (\cref{bound_by_slope}) and $y \in U$.
     \begin{enumerate}
         \item  $S^{+}_{\beta,u}(y) = S^{+}_{0,u}(y) + \beta u(y)$
         \item  $S^{+}_{\beta,u}(y)$, as given by \cref{local_slope}, is upper-semicontinuous in $y \in U$.
         \item If $[w, z] \subset U$, then
         \begin{equation}\label{slope_path_bound}
u(w)- e^{-\beta |w-z| } u(z) \leq \frac{1}{\beta} \left( {\max _{y \in[w, z]} S^{+}_{\beta,u}(y)} 
 \right) (1-e^{-\beta |z-w|})
         \end{equation}
         \item $S^{+}_{\beta,u}(y)=\mathrm{T}^{\beta,+}_u(y)$ where 
         $$
         \mathrm{T}^{\beta,+}_u(y):= \lim_{r \downarrow 0} L^{\beta}_u(B_r(x))
         = \inf \left\{{L}_u^{\beta}\left(B_r(x)\right): 0<r<\operatorname{dist}(x, Y)\right\}
         $$
     \end{enumerate}

Let $u \in C(U)$ satisfy \cref{bound_by_slope_negative} and $y \in U$. The following analogy of negative slope is true:

\begin{enumerate}
         \item  $S^{-}_{\beta,u}(y) = S^{-}_{0,u}(y) + \beta u(y)$
         \item  $S^{-}_{\beta,u}(y)$, as given by \cref{local_slope_negative}, is lower-semicontinuous in $y \in U$.
         \item If $[w, z] \subset U$, then
$$
u(w)- e^{\beta |w-z| } u(z) \geq \frac{1}{\beta} \left( {\min _{y \in[w, z]} S^{-}_{\beta,u}(y)} 
 \right) ( 1- e^{\beta |z-w|})
$$
         \item $S^{-}_{\beta,u}(y)=\mathrm{T}^{\beta,-}_u(y)$ where 
         $$
         \mathrm{T}^{\beta,-}_u(y):= - \lim_{r \downarrow 0} L^{\beta}_u(B_r(x))
         = - \inf \left\{{L}_u^{\beta}\left(B_r(x)\right): 0<r<\operatorname{dist}(x, Y)\right\}
         $$
         
     \end{enumerate}

\end{lemma}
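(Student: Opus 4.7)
My plan is to establish items (1)--(4) in this order, since each relies on the previous, and then note that the $S^{-}$ analogues follow \emph{mutatis mutandis} by reversing inequalities throughout and invoking \cref{bound_by_slope_negative} in place of \cref{bound_by_slope}. For (1), I start from the algebraic identity
$$\frac{u(w)-e^{-\beta r}u(y)}{1-e^{-\beta r}}\beta = \beta u(y) + \frac{\beta}{1-e^{-\beta r}}\bigl(u(w)-u(y)\bigr),$$
noting that $\beta/(1-e^{-\beta r}) = 1/r + \beta/2 + O(r)$ as $r \downarrow 0$. Since $u$ is locally Lipschitz by \cref{Harnack_lemma}, the quantity $r^{-1}\max_{|w-y|\le r}(u(w)-u(y))$ remains bounded, so taking the max over $w$ and sending $r \downarrow 0$ yields $S^{+}_{\beta,u}(y) = \beta u(y) + S^{+}_{0,u}(y)$. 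For (2), write $S^{+}_{\beta,u}(y) = \inf_{0<r<\operatorname{dist}(y,Y)} S^{+}_{\beta,u}(y,r)$ by monotonicity in $r$, and observe that for each fixed $r$ the map $y \mapsto S^{+}_{\beta,u}(y,r)$ is continuous (as a maximum of a continuous integrand over a ball varying continuously with $y$); upper semicontinuity then follows because a pointwise infimum of continuous functions is always u.s.c.

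For (3), the chain argument is the crux. Fix $\epsilon>0$ and set $M:=\max_{y\in[w,z]}S^{+}_{\beta,u}(y)$. For each $y\in[w,z]$ pick $r_y>0$ so small that $S^{+}_{\beta,u}(y,r)<M+\epsilon$ for $0<r\le r_y$; taking $r=|x-y|$ in that bound converts it into the local cone inequality
$$u(x)\le e^{-\beta|x-y|}u(y)+\frac{M+\epsilon}{\beta}\bigl(1-e^{-\beta|x-y|}\bigr)\qquad\text{whenever } |x-y|\le r_y.$$
Cover the compact segment $[w,z]$ by finitely many such balls and choose a partition $w=y_0,y_1,\dots,y_n=z$ along $[w,z]$ fine enough that the bound with center $y_{i+1}$ applies to the point $y_i$ for every $i$. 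The key algebraic observation is that composing two such inequalities, say at $y_0\to y_1$ and $y_1\to y_2$, telescopes via $e^{-\beta|y_0-y_1|}e^{-\beta|y_1-y_2|}=e^{-\beta(|y_0-y_1|+|y_1-y_2|)}$, reproducing a cone bound of the same form with summed radius. Iterating along the partition and using $\sum_i|y_i-y_{i+1}|=|w-z|$ yields $u(w)\le e^{-\beta|w-z|}u(z)+\tfrac{M+\epsilon}{\beta}(1-e^{-\beta|w-z|})$, and letting $\epsilon\downarrow 0$ concludes the step.

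For (4), one direction is immediate: for $|w-y|=r$, taking $a=w$, $b=y$ in the definition of $L^{\beta}_u(B_r(y))$ bounds the corresponding ratio by $L^{\beta}_u(B_r(y))$; together with \cref{CECA_convex} (the max is attained on the sphere), this yields $S^{+}_{\beta,u}(y,r)\le L^{\beta}_u(B_r(y))$ and hence $S^{+}_{\beta,u}(y)\le T^{\beta,+}_u(y)$. For the reverse direction I apply (3) on the segment $[a,b]\subset B_r(y)$ (valid for any $a,b\in B_r(y)$ with $r$ small enough that $B_r(y)\subset U$), obtaining
$$\beta\,\frac{u(a)-e^{-\beta|a-b|}u(b)}{1-e^{-\beta|a-b|}}\le\max_{z\in[a,b]}S^{+}_{\beta,u}(z)\le\max_{z\in\overline{B_r(y)}}S^{+}_{\beta,u}(z),$$
take the supremum over $a,b$, and let $r\downarrow 0$; the upper semicontinuity from (2) forces $\limsup_{r\downarrow 0}\max_{z\in\overline{B_r(y)}}S^{+}_{\beta,u}(z)\le S^{+}_{\beta,u}(y)$, giving $T^{\beta,+}_u(y)\le S^{+}_{\beta,u}(y)$. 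The main obstacle is the chaining in (3), which requires both the multiplicative telescoping of the biased cone bounds and a partition respecting the locally-chosen radii $r_y$; once (3) is secured, items (1), (2), and (4) are essentially formal.
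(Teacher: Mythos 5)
Your proof is correct in outline, with items (1), (2), and (4) matching the paper's reasoning essentially step for step (for (2) the paper works with sublevel sets of $T^{\beta}_u$ directly, but your infimum-of-continuous-functions phrasing is equivalent). The genuinely different step is (3): the paper fixes $g(t) = u(w + t(z-w))$, notes $g$ is Lipschitz by \cref{Harnack_lemma}, derives from \cref{bound_by_slope} the differential inequality $g'(t) \le \big(\max_{[w,z]} S^{+}_{\beta,u}\big)|z-w| - \beta|z-w|g(t)$, and integrates it with the factor $e^{\beta t|z-w|}$ to get \cref{slope_path_bound}. Your telescoping/chaining argument reaches the same conclusion without ever differentiating $g$, and the multiplicative telescoping $e^{-\beta|y_i-y_{i+1}|}e^{-\beta|y_{i+1}-y_{i+2}|} = e^{-\beta(|y_i-y_{i+1}|+|y_{i+1}-y_{i+2}|)}$ is exactly the right mechanism. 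The one place your write-up is too quick is the claim that a finite subcover by balls $B_{r_y}(y)$ yields a partition with $|y_i - y_{i+1}| \le r_{y_{i+1}}$: a finite subcover alone does not force the $r_{y_{i+1}}$ at partition points to be bounded below. You should instead produce a \emph{uniform} radius $r_0 > 0$ with $S^{+}_{\beta,u}(y, r_0) < M + \epsilon$ for all $y \in [w,z]$; this follows from compactness because $U_n := \{y \in [w,z] : S^{+}_{\beta,u}(y, 1/n) < M + \epsilon\}$ is a nested increasing family of relatively open sets (continuity of $S^{+}_{\beta,u}(\cdot,1/n)$ plus monotonicity in $r$) whose union is $[w,z]$. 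With the uniform $r_0$ in hand, any partition of mesh $< r_0$ does the job and the rest of your argument is sound. Once (3) is patched, your use of u.s.c. from (2) in step (4) (rather than the paper's "monotonicity plus continuity of $S^{+}_{\beta,u}(\cdot,s)$" route) is a clean equivalent, and the $S^{-}$ statements do indeed follow by reversing the inequalities, as both you and the paper note.
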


\begin{proof}
    
    Taylor expand the exponential for small $r$:
    $$
    \frac{\beta}{\beta r + o(r)} \left( u(w) - u(y) + \beta r u(y) + o(r) \right)
    $$
    $$
    = \frac{u(w) - u(y)}{ r + o(r)} + \beta \frac{u(y)}{1+o(1)} + \frac{o(r)}{r+ o(r)}
    $$
    Take supremum over $w$ and let $r \to 0$, the first result is thus immediate by lemma \cref{Harnack_lemma}.

    The second part for $\beta = 0$ is proven in \cite{aronsson2004tour}. Thus the general case follows easily with the fact $u$ is continuous. But here is another proof which does not rely on this:

    Note that $\mathrm{T}^{\beta}_u(x)<M$ implies $L^{\beta}_u\left(B_r(x)\right)<M$ for some $r>0$. For $y \in B_r(x)$ and $s+|x-y| \leq r$, we have $B_s(y) \subset B_r(x)$ and then $L^{\beta}_u\left(B_s(y)\right) \leq L^{\beta}_u\left(B_r(x)\right)$. Hence $\mathrm{T}^{\beta}_u(y)<M$. This shows that $\left\{x \in U: \mathrm{T}^{\beta}_u(x)<M\right\}$ is open, establishing the upper-semicontinuity.

    By lemma \cref{Harnack_lemma} if \cref{bound_by_slope} holds, then $u$ is Lipschitz continuous on any compact subset of $U$. Let $[w, z] \subset U$. By the local Lipschitz continuity, $g(t):=u(w+t(z-w))$ is Lipschitz continuous in $t \in[0,1]$. Fix $t \in(0,1)$ and observe that \cref{bound_by_slope} with $x=w+(t+h)(z-w)$ and $y=w+t(z-w)$ implies, for small $h>0$,

$$
\begin{aligned}
\frac{e^{\beta h|z-w|} g(t+h)-g(t)}{e^{\beta h |z-w|} -1} \beta
& =\frac{e^{\beta h |z-w|} u(w+(t+h)(z-w))-u(w+t(z-w))}{ e^{\beta h |z-w|} -1} \beta \\
& \leq S^{+}_{\beta,u} (w+t(z-w), h|z-w|)
\end{aligned}
$$

Let $h \downarrow 0$ we find,

$$
g^{\prime}(t) \frac{1}{ |z-w|} + \beta g(t) \leq 
{\max _{y \in[w, z]} S^{+}_{\beta,u}(y)} 
$$
\begin{equation}\label{derivative_bound}
g^{\prime}(t) \leq 
{\max _{y \in[w, z]} S^{+}_{\beta,u}(y)} |z-w| -  \beta |z-w| g(t)
\end{equation}

If we define $h(t) = e^{\beta t|z-w|} g(t)$, we have 
$$
h^{\prime}(t) \leq e^{\beta t |z-w|}
{\max _{y \in[w, z]} S^{+}_{\beta,u}(y)} |z-w|
$$

Thus 

\begin{equation}
    h(t) - h(0) \leq (e^{\beta t |z-w|} -1)
{\max _{y \in[w, z]} S^{+}_{\beta,u}(y)} \frac{1}{\beta}
\end{equation}

Let $t=1$ we get the desired result.

For the last part, we have

$$
S^{+}_{\beta,u}(y, r)=\max _{|w-y|=r}\left(\frac{u(w)-e^{-\beta r}u(y)}{1 - e^{-\beta r}}\right) \beta \leq L_u^{\beta}\left(B_r(y)\right)
$$

and therefore $S^{+}_{\beta,u}(y) \leq \mathrm{T}_u^{\beta}(y)$. On the other hand, using \cref{slope_path_bound} and the monotonicity of $S^{+}_{\beta,u}(y, r)$ in $r$ and the continuity in $y$, we obtain, if $s>0$ is small,

$$
\begin{aligned}
\mathrm{T}_u^{\beta,+}(y) & =\lim _{r \downarrow 0} L_u^{\beta}\left(B_r(y)\right) \leq \lim _{r \downarrow 0} \sup _{w \in B_r(y)} S^{+}_{\beta,u}(w) \\
& \leq \lim _{r \downarrow 0} \sup _{w \in B_r(y)} S^{+}_{\beta,u}(w, s)=S^{+}_{\beta,u}(y, s) .
\end{aligned}
$$

The remaining inequality, $\mathrm{T}_u^{\beta,+}(y) \leq S^{+}_{\beta,u}(y)$, follows upon sending $s \downarrow 0$. We have proved $S^{+}_{\beta,u}(y)=\mathrm{T}_u^{\beta,+}(y)$.
\end{proof}

\subsection{Increasing slope estimate} 

\begin{lemma}\label{lemma_increasing_slope}
    Let $u \in C(U)$ and \cref{bound_by_slope} hold. If $x^0, x^1 \in U, 0<\left|x^1-x^0\right|<$ dist $\left(x^0, Y\right)$, and

$$
u\left(x^1\right)-e^{-\beta |x^1 - x^0|} u\left(x^0\right) = \frac{1}{\beta} (1-e^{-\beta \left|x^1-x^0\right|}) S^{+}\left(x^0,\left|x^1-x^0\right|\right),
$$

equivalently,

$$
u\left(x^1\right)=\max \left\{ u(w) :\left|w-x^0\right|=\left|x^1-x^0\right|  \right\}
$$

then, for $0<s<\operatorname{dist}\left(x^0, Y\right)-\left|x^1-x^0\right|$,
\begin{equation}\label{increasing_slope_inequality}
    \beta u(x^0) \leq S^{+}\left(x^0,\left|x^1-x^0\right|\right) \leq S^{+}\left(x^1\right) \leq S^{+}\left(x^1, s\right) .
\end{equation}
\end{lemma}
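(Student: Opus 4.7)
The plan is to reduce the chain to its substantive middle inequality $S^+(x^0, r_0) \leq S^+(x^1)$, where $r_0 := |x^1-x^0|$ and $S := S^+(x^0, r_0)$. The outer two inequalities are essentially definitional: the first, $\beta u(x^0) \leq S$, is exactly \eqref{slope_lower_bound} applied at $y = x^0$; and the third, $S^+(x^1) \leq S^+(x^1, s)$, follows because $S^+(x^1) = \inf_{r>0} S^+(x^1, r)$ by the monotonicity of $r \mapsto S^+(x^1, r)$ established in \cref{CECA_convex}.

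For the middle inequality, rewrite the tightness hypothesis as $u(x^1) = e^{-\beta r_0} u(x^0) + \frac{1-e^{-\beta r_0}}{\beta} S$, so that $x^1$ attains $g^+(r_0)$. For $0 < \sigma < \operatorname{dist}(x^0, Y) - r_0$, the monotonicity of $r \mapsto S^+(x^0, r)$ combined with tightness at $r_0$ gives, after a short algebraic manipulation,
$$
g^+(r_0 + \sigma) \;\geq\; e^{-\beta \sigma}\, u(x^1) + \frac{1 - e^{-\beta \sigma}}{\beta}\, S.
$$
Let $x^{2,\sigma}$ attain this maximum. When $x^{2,\sigma}$ lies on a geodesic extension of $x^0 x^1$ past $x^1$—automatic in $\mathbb{R}^n$—one has $|x^{2,\sigma} - x^1| = \sigma$, and \eqref{bound_by_slope} applied with center $x^1$ and radius $\sigma$ yields
$$
u(x^{2,\sigma}) \;\leq\; e^{-\beta \sigma}\, u(x^1) + \frac{1 - e^{-\beta \sigma}}{\beta}\, S^+(x^1, \sigma).
$$
Chaining these two displays via $u(x^{2,\sigma}) = g^+(r_0+\sigma)$ and cancelling the common term $e^{-\beta \sigma} u(x^1)$ forces $S \leq S^+(x^1, \sigma)$. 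Sending $\sigma \downarrow 0$ then yields $S \leq S^+(x^1)$.

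The main obstacle is the length-space subtlety: if the segment $x^0 x^1$ cannot be geodesically extended, the argmax $x^{2,\sigma}$ only satisfies $|x^{2,\sigma} - x^1| = d$ with $d \in [\sigma, 2r_0 + \sigma]$, and the same computation yields only the weaker convex-combination bound $S^+(x^1, d) \geq \alpha S + (1-\alpha) \beta u(x^1)$, with $\alpha := (1-e^{-\beta\sigma})/(1-e^{-\beta d}) \in (0, 1]$. Closing the argument in full generality then requires a compactness/continuity argument to extract a subsequence $\sigma_k \downarrow 0$ along which $x^{2,\sigma_k} \to x^1$ and $d_k/\sigma_k \to 1$, so that $\alpha \to 1$ in the limit. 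For the primary application in \cref{linear-blow-up} (blow-up on $\mathbb{R}^n$), the Euclidean extension suffices and no such extra care is needed.
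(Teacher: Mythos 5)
The outer inequalities and the algebraic derivation of
$g^+(r_0+\sigma) \geq e^{-\beta\sigma}\, u(x^1) + \frac{1-e^{-\beta\sigma}}{\beta}\, S$
are both correct. The gap lies in passing from that bound to $S^+(x^1,\sigma)$. Your claim that $x^{2,\sigma}$, a maximizer of $u$ on the sphere $\{|w-x^0|=r_0+\sigma\}$, lies on the geodesic extension of $x^0x^1$ past $x^1$ ``automatically in $\mathbb{R}^n$'' is false: nothing in the hypotheses localizes the argmax near $x^1$, and it can sit anywhere on that sphere, so $d:=|x^{2,\sigma}-x^1|$ is only known to lie in $[\sigma,\,2r_0+\sigma]$ — in $\mathbb{R}^n$ as much as in a general length space. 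The compactness fix you sketch does not close this either: even if one could force $x^{2,\sigma_k}\to x^1$ (which would already require uniqueness of the maximizer on the $r_0$-sphere), the approach is typically tangential, so $d_k\sim\sqrt{\sigma_k}$, $d_k/\sigma_k\to\infty$, and your $\alpha=(1-e^{-\beta\sigma_k})/(1-e^{-\beta d_k})\to 0$, degrading the bound to the trivial $S^+(x^1,d_k)\ge\beta u(x^1)$.

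The paper avoids this by moving \emph{backward} along the segment rather than forward past $x^1$: with $x^t=x^0+t(x^1-x^0)$, \cref{bound_by_slope} at center $x^0$ bounds $u(x^t)$ from above, and combining with the tightness identity gives
\[
\frac{u(x^1)-e^{-\beta(1-t)r_0}u(x^t)}{1-e^{-\beta(1-t)r_0}}\,\beta \;\ge\; S,
\]
so $S^+(x^t,(1-t)r_0)\ge S$ simply because $x^1$ is a point on the $(1-t)r_0$-sphere around $x^t$ — no control over argmaxes is needed, since the witness point $x^1$ is fixed and known. Monotonicity of $S^+(x^t,\cdot)$ bumps the radius up to any fixed $s>(1-t)r_0$, continuity of $y\mapsto S^+(y,s)$ lets one send $t\uparrow1$ to get $S^+(x^1,s)\ge S$, and $s\downarrow0$ finishes. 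You should reorient your middle step along these lines, testing slopes against the fixed point $x^1$ rather than an uncontrolled sphere maximizer.
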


\begin{proof}
    The first inequality just recalls that $\beta u \leq S^{+}_{\beta}$, and the last inequality recalls the monotonicity of $S^{+}_{\beta}$.

First, we use, by assumption, that
$\text{for} \quad\left|x-x^0\right| \leq\left|x^1-x^0\right| ,$
\begin{equation}\label{increasing_slope_1}
    u(x) \leq e^{-\beta |x-x^0|} u\left(x^0\right) + S^{+}\left(x^0,\left|x^1-x^0\right|\right) \frac{1}{\beta} (1-e^{-\beta \left|x-x^0\right|}) \quad 
\end{equation}
Put

$$
x^t=x^0+t\left(x^1-x^0\right) \quad \text { for } \quad 0 \leq t \leq 1
$$

and $x=x^t$ in \cref{increasing_slope_1} to find,

$$
\begin{aligned}
u\left(x^t\right) 
& \leq e^{-\beta t|x^1-x^0|} u\left(x^0\right) + 
S^{+}\left(x^0,\left|x^1-x^0\right|\right) \frac{1}{\beta} (1-e^{-\beta t \left|x^1-x^0\right|}) \\
& =e^{-\beta t|x^1-x^0|} u\left(x^0\right) + 
\left( \frac{u(x^1)- e^{-\beta |x^1-x^0|}u(x^0)}{1- e^{-\beta |x^1-x^0|}}  \right) (1-e^{-\beta t \left|x^1-x^0\right|})
\end{aligned}
$$

Thus

$$
\begin{aligned}
&u\left(x^1\right)- e^{-\beta (1-t) |x^1-x^0|} u\left(x^t\right) \\
& \geq u(x^1) - e^{-\beta |x^1 -x^0|} u(x^0) +(e^{\beta |x^1-x^0|} - e^{\beta(1-t) |x^1-x^0|}) \frac{1}{\beta} S^+(x^0,|x^1-x^0|)
\\
& =( 1-e^{-\beta |x^1-x^0|} + e^{\beta |x^1-x^0|} - e^{\beta(1-t) |x^1-x^0|}) \frac{1}{\beta} S^+(x^0,|x^1-x^0|) \\
& = ( 1 - e^{\beta(1-t) |x^1-x^0|}) \frac{1}{\beta} S^+(x^0,|x^1-x^0|) \\
\end{aligned}
$$
and therefore
\begin{equation}\label{Increaing_slope_2}
    S^{+}\left(x^t,\left|x^1-x^t\right|\right)=S^{+}\left(x^t,(1-t)\left|x^1-x^0\right|\right) \geq S^{+}\left(x^0,\left|x^1-x^0\right|\right) .
\end{equation}

Next we assume that $0<s<\operatorname{dist}\left(x^1, Y\right)$; then

$$
\left|x^t-x^1\right|<s<\operatorname{dist}\left(x^t, Y\right)
$$

if $t$ is near 1, and then, by \cref{Increaing_slope_2} and the monotonicity of $S^{+}$,

$$
S^{+}\left(x^t, s\right) \geq S^{+}\left(x^t,\left|x^1-x^t\right|\right) \geq S^{+}\left(x^0,\left|x^1-x^0\right|\right)
$$

Letting $t \uparrow 1$ in the inequality of the extremes above yields

$$
S^{+}\left(x^1, s\right) \geq S^{+}\left(x^0,\left|x^1-x^0\right|\right)
$$

Letting $s \downarrow 0$, the conclusion follows.

\end{proof}

\section{Proof of the main theorem}
    
We begin the proof of \cref{main}. We first present a one-sided version of \cref{main}:

\begin{proposition}\label{one_side}
Let $u\in C(U)$. Then the following conditions, when imposed for every
$V\Subset U$, are equivalent:
\begin{enumerate}
\item[(a)] If $L\in[0,\infty]$, $z\in\partial V$ and
\[
  u(w)\le e^{-\beta d(w,z)} u(z)+\frac{L}{\beta}(1-e^{-\beta d(w,z)}) \qquad\text{for } w\in\partial V,
\]
then
\[
  u(x)\le e^{-\beta d(w,z)} u(z)+\frac{L}{\beta}(1-e^{-\beta d(w,z)})\qquad\text{for } x\in V.
\]

\item[(b)] If $x\in V$, then
\[
  u(x)\le \Psi\bigl(u|_{\partial V}\bigr)(x).
\]

\item[(c)] If $a\in\mathbb{R}$ and $C(x)$ is a positive exponential cones with center $z$ where $z\notin V$, then for
$x\in V$,
\[
  u(x)-C(x)\le \max_{w\in\partial V}\bigl(u(w)-C(w)\bigr).
\]

\item[(d)] If $v\in C(\overline V)$ satisfies $v=u$ on $\partial V$ and
$v\le u$ in $V$, then
\[
  \sup_{x\in V}\bigl(T_u(x)\bigr)\le \sup_{x\in V}\bigl(T_v(x)\bigr).
\]

The next two conditions are also equivalent to the above and do not
involve the general ``test set'' $V$:

\item[(e)] If $y\in U$, then
\[
  g^{+}(r)=\max_{|w-y|\le r} u(w)
\]
is $\beta$-convex for $0\le r<\operatorname{dist}(y,\partial U)$.

\item[(f)] If $x,y\in U$ and $|x-y|\le r<\operatorname{dist}(y,\partial U)$,
then
\[
  u(x) \leq  e^{-\beta |x-y|}u(y)+\max _{\{w:|w-y| = r\}}\left( 
\frac{u(w)- e^{-\beta r}u(y)}{1- e^{-\beta r}}
\right)  \left({1- e^{-\beta |x-y|}}\right)
\]
\end{enumerate}
\end{proposition}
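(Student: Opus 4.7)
The plan is to prove the six conditions equivalent by routing everything through (c) (comparison with positive exponential cones), since the machinery developed in \cref{CECA_convex}, \cref{Harnack_lemma}, and \cref{lemma_increasing_slope} is already phrased in that language. Specifically, I would establish the cycle (c) $\Rightarrow$ (a) $\Rightarrow$ (b) $\Rightarrow$ (c), then the block (c) $\Leftrightarrow$ (f) $\Leftrightarrow$ (e) of ``test-set-free'' reformulations, and finally (c) $\Leftrightarrow$ (d) to bring in the one-sided absolute minimizing condition. A single auxiliary observation is used repeatedly: the function $C_{z,L}(w):=e^{-\beta d(w,z)}u(z)+\tfrac{L}{\beta}\bigl(1-e^{-\beta d(w,z)}\bigr)$ is exactly a positive exponential cone in the sense of \cref{def_exp_cone}, with slope $A=L$, $B=u(z)-L/\beta$, $K=L/\beta$, and it passes through $(z,u(z))$.

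For the cone-comparison triangle: (c) $\Rightarrow$ (a) follows by applying (c) to $C_{z,L}$, whose center $z\in\partial V$ lies outside the open set $V$, so the hypothesis on $\partial V$ gives $\max_{\partial V}(u-C_{z,L})\le 0$ and hence $u\le C_{z,L}$ on $V$. For (a) $\Rightarrow$ (b), the definition of $L^{\beta}_u(\partial V)$ guarantees that the hypothesis of (a) holds for every $z\in\partial V$ with $L=L^{\beta}_u(\partial V)$, and taking the infimum of the conclusion over $z\in\partial V$ reproduces $\Psi(u|_{\partial V})(x)$. For (b) $\Rightarrow$ (c), given a cone $C$ with center $z\notin V$ and $M:=\max_{\partial V}(u-C)$, the shifted cone $v:=C+M$ is itself a positive exponential cone and (by direct computation of the $\beta$-slope of $v$ on $\partial V$) satisfies $v\ge \Psi(v|_{\partial V})$ on $V$; since $u|_{\partial V}\le v|_{\partial V}$ implies $\Psi(u|_{\partial V})\le \Psi(v|_{\partial V})$ by monotonicity of the infimum in its argument, the chain $u\le\Psi(u|_{\partial V})\le v=C+M$ delivers (c). The equivalence (c) $\Leftrightarrow$ (f) is essentially what \cref{CECA_convex} already carries out on annular balls centered at $y$, with (f) $\Rightarrow$ (c) obtained by supposing $u-(C+M)$ attains a positive interior maximum at some $\hat x\in V$ and applying (f) along a geodesic from $\hat x$ toward the center $z$ of $C$ to violate \cref{lemma_increasing_slope}. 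The equivalence (f) $\Leftrightarrow$ (e) is then a purely algebraic reformulation: $g^{+}(r)=\max_{|w-y|\le r}u(w)$ is $\beta$-convex on $[0,\operatorname{dist}(y,\partial U))$ iff $g^{+}$ lies below its $\beta$-exponential chord from $0$ to $r$, which (after substituting the definition of $g^{+}$ and optimizing over $|x-y|\le r$) is exactly (f).

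The remaining equivalence (c) $\Leftrightarrow$ (d) couples cone comparison to the one-sided slope-minimization, and I expect (d) $\Rightarrow$ (c) to be the main obstacle. In the easy direction (c) $\Rightarrow$ (d), given a competitor $v\in C(\overline V)$ with $v=u$ on $\partial V$ and $v\le u$ in $V$, on any component of $\{u>v\}$ condition (c) forces the local slope $T_u$ of $u$ to be bounded by the cones determined by $v$'s boundary values on the component, yielding $\sup_V T_u\le\sup_V T_v$. For the contrapositive (d) $\Rightarrow$ (c), assume (c) fails for some positive cone $C$ with center $z\notin V$, so $\{u>C+M\}\cap V$ is nonempty. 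Set $v(x):=\min\{u(x),C(x)+M\}$, so $v=u$ on $\partial V$ and $v\le u$ in $V$. The shifted cone $C+M$ is $\beta$-affine along radial segments from $z$, so $T_{C+M}$ equals its slope $|A|$; on the bad set the function $u$ strictly exceeds $C+M$, and \cref{lemma_increasing_slope} applied at a point on the bad set together with a radial direction toward $z$ forces $T_u$ to strictly exceed $|A|$ somewhere in $V$, contradicting (d). The technical delicacy lies in extracting a \emph{quantitative} slope gap from strict inequality on the bad set and in handling its boundary and the edge cases $L=\infty$ or $M=\infty$ cleanly; these I would dispose of by a standard approximation argument shrinking $M$ slightly and passing to the limit.
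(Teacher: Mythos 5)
Your overall architecture (routing everything through (c), the cone‐comparison condition) matches the spirit of the paper's argument, and the easy cycle $(c)\Rightarrow(a)\Rightarrow(b)\Rightarrow(c)$ is sketched correctly. The crucial identification of $C_{z,L}$ as a positive exponential cone is also the right move. The equivalence $(c)\Leftrightarrow(f)\Leftrightarrow(e)$ is indeed carried by \cref{CECA_convex} and, in the reverse direction, by the locality argument of \cref{CEC_inner_set}; here your reading of the paper is accurate.

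The genuine problems are in the $(c)\Leftrightarrow(d)$ block. For $(c)\Rightarrow(d)$ (equivalently $(f)\Rightarrow(d)$) the paper does not argue ``locally'' as you describe. It first establishes a quantitative \emph{chain proposition}: starting from $x^0\in V$ with $S^+(x^0)>0$, one iteratively chooses $x^j:=\arg\max\{u(w):|w-x^{j-1}|\le\delta\}$. Combining \cref{lemma_increasing_slope} with a packing argument (the balls $B_{\delta/2}(x^{2k})$ are pairwise disjoint, so compactness forces the chain to reach $\partial V$ in finitely many steps), one obtains
\[
u(x^{\infty})-e^{-\beta\sum_j|x^j-x^{j-1}|}\,u(x^0)\;\ge\;\tfrac{1}{\beta}\,S^+(x^0)\,\bigl(1-e^{-\beta\sum_j|x^j-x^{j-1}|}\bigr).
\]
Then the competitor $v\le u$ with $v=u$ on $\partial V$ and $\sup_V T_v<T_u(x^0)$ is shown to satisfy the reverse telescoped inequality along the \emph{same} chain (using $T_v$ as an upper bound on the one-step exponential increments of $v$), and since $u(x^\infty)=v(x^\infty)$ this forces $u(x^0)<v(x^0)$, a contradiction. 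This global propagation to the boundary is what makes the argument go through; the statement that ``condition (c) forces $T_u$ to be bounded by the cones determined by $v$'s boundary values on the component'' is not a proof and does not reflect the mechanism. Note also that the construction uses compactness of $\overline V$ in an essential way, which is available precisely because $V\Subset U$; your sketch does not flag where this is consumed.

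For $(d)\Rightarrow(c)$ your competitor $v:=\min\{u,C+M\}$ does not yield a contradiction as stated: on $V\setminus W$ (where $W:=\{x\in V:u(x)>C(x)+M\}$) you have $v\equiv u$, hence $T_v\equiv T_u$ there, so $\sup_V T_v\ge\sup_{V\setminus W}T_u$ and the inequality $\sup_V T_v<\sup_V T_u$ does not follow from a slope gap on $W$ alone. The clean repair is to shrink the test set: since $M=\max_{\partial V}(u-C)$ one has $W\Subset V\Subset U$, and taking $V':=W$ with competitor $v:=C+M$ on $W$ gives $v=u$ on $\partial W$ and $v<u$ in $W$. Even with this fix you still owe a proof that $\sup_W T_u$ strictly exceeds $\sup_W T_v$; one cannot simply invoke \cref{lemma_increasing_slope}, because that lemma assumes the slope estimate (condition (f)) which is not yet available under the hypothesis (d). You should also be aware that the paper's own proof text only records $(f)\Rightarrow(d)$ explicitly; it does not spell out a standalone $(d)\Rightarrow(c)$ or $(d)\Rightarrow(a)$, so this direction requires more care than either you or the paper currently supplies.
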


    Note that \cref{thm_comparison_cones} $\Leftrightarrow$ \cref{thm_biased_infty_lap} $\Leftrightarrow$ \cref{thm_tug_of_war} was proven in \cite{peres2010biased}. \cref{thm_stronglyAM} $\Leftrightarrow$ \cref{thm_variational} are equivalent by a proper definition of $L^{\infty}$ minimizer,as shown in \cite{crandall2003efficient}. And we already have \cref{thm_comparison_cones} $\Rightarrow$  \cref{thm_convexity} by \cref{CECA_convex}.

    \cref{thm_ambe} $\Leftarrow$ \cref{thm_comparison_cones}:
    suppose $u$ satisfies comparison with exponential cones, we first show 
    \begin{equation}\label{Lip_remove_boundary}
    L^\beta (\partial U) = L^\beta (\partial (U\setminus \{x \})) = 
    L^\beta (\partial U \cup \{x \})
    \end{equation}
    $$
    $$

    


        It then suffices to show, for any $x \in U, y\in \partial U$, 
    \begin{equation}\label{compar_beta_extend_11}
        e^{\beta |x-y|} u(y) - L \frac{(e^{\beta |x-y|}- 1)}{\beta}
    \leq    u(x)  
    \end{equation}

    \begin{equation}\label{compar_beta_extend_22}
        u(x) \leq  e^{-\beta |x-y|} u(y)  + L \frac{(1- e^{-\beta |x-y|})}{\beta}
    \end{equation}

    Note that \cref{compar_beta_extend_11} and $\cref{compar_beta_extend_22}$ are true when $x \in \partial U$, by comparison with exponential cones, they are also true for $x \in U$: comparison from below implies  \cref{compar_beta_extend_11} (negative cone), and comparison from above implies  \cref{compar_beta_extend_22} (positive cone).

    Thus if we use \cref{Lip_remove_boundary} twice, we get
    $$
    L^\beta (\partial U) = L^\beta (\partial (U\setminus \{x,y \})) = 
    L^\beta (\partial U \cup \{x ,y\})
    $$
    Therefore 
    $$
    u(y) - e^{-\beta |x-y|} u(x)  \leq  L \frac{(1- e^{-\beta |x-y|})}{\beta}
    $$
    and 
    $$
    u(x) - e^{-\beta |x-y|} u(y)  \leq  L \frac{(1- e^{-\beta |x-y|})}{\beta}
    $$
    So $L^{\beta}(U) = L^{\beta}(\partial U)$, $u$ is AM.

\bigskip 

    \cref{thm_ambe} $\Rightarrow$ \cref{thm_comparison_cones}:  Suppose now that $u \in \operatorname{AM}(U)$. We want to show that $u$ then enjoys comparison with cones from above. Assume that $V \subset \subset U, y \notin V$ and set

\begin{equation}
\begin{aligned}
W := \Bigl\{\, x \in V :\;
&u(x) - e^{-\beta |x - y|} u(y)
    - A \bigl(1 - e^{-\beta |x - y|}\bigr) \\
&> \max_{w \in \partial V}
   \bigl( u(w) - e^{-\beta |w - y|} u(y)
   - A \bigl(1 - e^{-\beta |x - y|}\bigr) \bigr)
   \Bigr\}.
\end{aligned}
\end{equation}

We want to show that $W$ is empty. If it is not empty, then it is open and

\begin{equation}
\begin{aligned}
u(x) &= e^{-\beta |x - y|} u(y)
    + A \bigl(1 - e^{-\beta |x - y|}\bigr) \\
&\quad + \max_{w \in \partial V}
    \Bigl( u(w) - e^{-\beta |w - y|} u(y)
    - A \bigl(1 - e^{-\beta |x - y|}\bigr) \Bigr)
    =: C(x) \quad \text{for } x \in \partial W.
\end{aligned}
\end{equation}

Therefore $u=C$ on $\partial W$ and $L_u(W)=L_C(\partial W)$ since $u \in \mathrm{AM}(U)$. But then $u=C$ in $W$ by \cref{equality_cone_on_boundary}, which is a contradiction.

\bigskip


We state the following proposition and postpone its proof until the end of the section. 
\begin{proposition}\label{CEC_inner_set}
Let $u \in C(U)$. Assume that for each $x \in U$ there is a neighborhood $V \subset \subset U$ of $x$ such that $u \in \operatorname{CECA}(V)$. Then $u \in \operatorname{CECA}(U)$.
\end{proposition}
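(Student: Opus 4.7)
I would argue by contradiction via propagation of a maximum. Suppose $u$ satisfies CECA locally but fails on some open test set $V \subset U$: there exist a vertex $z \in X \setminus V$ and a positive cone $\varphi(x) = C_z^{+}(x) + B e^{-\beta|x-z|} + K$ with $u \leq \varphi$ on $\partial(V \setminus \{z\})$ yet $u(x^\star) > \varphi(x^\star)$ at some interior point $x^\star \in V$. A preliminary reduction replaces $V$ by a bounded open subset on which the failure persists: the cone $\varphi$ grows like $A/\beta$ away from $z$ while $u$ is locally bounded, which confines the set $\{u > \varphi\}$. After this reduction $\overline{V} \subset U$ is compact and $M := \sup_V(u - \varphi) > 0$ is attained at some $\hat x \in V$.

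The core step is to show that the compact non-empty set $E := \{x \in \overline V : u(x) - \varphi(x) = M\}$ propagates. Given $\hat x \in E$, choose a neighborhood $V' \subset\subset U$ of $\hat x$ on which $u \in \operatorname{CECA}(V')$ and a radius $r > 0$ with $\overline{B_r(\hat x)} \subset V \cap V'$ and $z \notin \overline{B_r(\hat x)}$. Let $M_r := \sup_{\partial B_r(\hat x)}(u - \varphi) \le M$. Then $u \le \varphi + M_r$ on $\partial B_r(\hat x)$, and since $\varphi + M_r$ is a positive cone with vertex $z$ outside the ball, local CECA on $V'$ yields $u \le \varphi + M_r$ throughout $B_r(\hat x)$. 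Evaluating at the center gives $M \le M_r$, hence $M_r = M$; by compactness of $\partial B_r(\hat x)$ the supremum is attained, producing a point $x_1 \in \partial B_r(\hat x) \cap E$.

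Iterating, I build a sequence $(x_k)_{k \ge 0} \subset E$ with $|x_{k+1} - x_k|$ a fixed positive fraction of $\operatorname{dist}(x_k, \partial V)$, selecting at each stage a maximizer on the relevant sphere that is closest to $\partial V$. By compactness of $\overline V$ the sequence has an accumulation point $x_\infty \in \overline V$, and by continuity $(u - \varphi)(x_\infty) = M > 0$; careful bookkeeping of the chosen directions forces $\operatorname{dist}(x_k, \partial V) \to 0$, placing $x_\infty \in \partial V$ and contradicting $u \le \varphi$ there.

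The main technical obstacle is precisely this last step: the propagation delivers a maximizer on each sphere but only in \emph{some} direction, and one must argue that successive choices can actually steer the sequence toward $\partial V$ rather than cycling inside. A cleaner alternative couples the propagation with Proposition \ref{equality_cone_on_boundary}: since $u$ coincides with the shifted cone $\varphi + M$ at both $\hat x$ and the produced $x_1 \in \partial B_r(\hat x)$, the local slope $L^\beta_u(B_r(\hat x))$ is pinned to equal the cone slope $A$, and Proposition \ref{equality_cone_on_boundary} then forces $u \equiv \varphi + M$ on $B_r(\hat x)$. This upgrades $E$ to be open in $V$; being also closed, $E$ must contain a whole connected component of $V$, whose boundary lies in $\partial V$ and carries the value $M > 0$, again contradicting $u \le \varphi$ on $\partial(V \setminus \{z\})$.
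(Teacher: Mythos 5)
Your proposal takes a genuinely different route from the paper, but both of your alternatives have real gaps, so it does not constitute a proof as written.

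Your first route — iteratively propagating the maximum set $E$ along spheres and steering toward $\partial V$ — has exactly the problem you acknowledge: on each sphere you only get \emph{some} maximizer, and nothing in the setup gives you control over which direction it lies in. Picking the maximizer ``closest to $\partial V$'' does not by itself force $\operatorname{dist}(x_k,\partial V)\to 0$; the maximizers could in principle stay on a compact subset far from the boundary, and the ``careful bookkeeping'' you defer is precisely the missing argument.

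Your second route misapplies Proposition \ref{equality_cone_on_boundary}. That proposition requires $u=C$ on the \emph{entire} boundary $\partial W$ together with $L_u(W)=|a|$. You have $u-\varphi=M$ only at the two points $\hat x$ and $x_1$, with $u\le\varphi+M$ elsewhere on $\partial B_r(\hat x)$ — equality on the whole sphere is not available. Moreover the intermediate claim that these two point-coincidences ``pin'' $L^\beta_u(B_r(\hat x))$ to the cone slope $A$ is not justified: the upper bound $u\le\varphi+M$ on $B_r(\hat x)$ gives no lower bound on $u$ there, and $L^\beta_u$ is a supremum over all ordered pairs, which can be inflated by $u$ dipping low somewhere in the ball. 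So the hypothesis of Proposition \ref{equality_cone_on_boundary} is doubly unmet, and the openness of $E$ is not established. (This is analogous to the classical fact that the strong maximum principle for $u-\varphi$ is not an automatic consequence of a comparison principle; some extra structure is needed.)

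The paper avoids this difficulty entirely by routing through the equivalence of CECA with the $\beta$-convexity of the one-dimensional function $g^{+}(r)=\max_{|w-y|\le r}u(w)$ (item (e) of Proposition \ref{one_side}). Local CECA gives $\beta$-convexity of $g^{+}$ on a small interval $[0,\delta]$ for each $y$. One then lets $R$ be the supremum of radii on which $g^{+}$ is $\beta$-convex, uses compactness of the sphere $\{|w-y|=R\}$ to find a uniform $\kappa$ for which $g^{+}_w$ is $\beta$-convex on $[0,\kappa]$ for each $w$ on that sphere, and observes that $g^{+}(R+s)=\max_{|w-y|=R}g^{+}_w(s)$ is a supremum of $\beta$-convex functions, hence $\beta$-convex. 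The only remaining issue — matching of left and right derivatives at $r=R$ — is handled by the increasing-slope estimate (Lemma \ref{lemma_increasing_slope}), which is exactly the quantitative tool missing from your propagation argument. Passing a local property to a global one for a scalar function of one variable is far more tractable than chasing maximum sets across spheres in $\mathbb{R}^n$, which is what makes the paper's reduction to (e) decisive. If you want to salvage a sliding-spheres argument, you would need a genuine strong maximum principle for $u-\varphi$; the paper's ``local maxima imply locally constant'' lemma is for $u$ alone, not $u-\varphi$, so it does not supply it.
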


\begin{proposition}
    Let $U$ be bounded and $u \in C(\bar{U})$ satisfy \cref{bound_by_slope}, $x^0 \in U, S^{+}_0\left(x^0\right)>0$ and $\delta>0$. Then there is a sequence of points $\left\{x^j\right\}_{j=1}^{M} \subset U$ and a point $x^{M} \in \partial U$ with the following properties:

    \begin{enumerate}
        \item $\left|x^j-x^{j-1}\right| \leq \delta \text { for } j=1,2, \ldots .$
        \item $\left[x^{j-1}, x^j\right] \subset U$ for $j=1,2, \ldots$.
        \item $S^{+}\left(x^j\right) \geq S^{+}\left(x^{j-1}\right)$ for $j=1,2, \ldots$.
        \item $u\left(x^{\infty}\right)- e^{-\beta \sum_{j=0}^{\infty} \left|x^j-x^{j-1}\right|} u\left(x^0\right) \geq \frac{1}{\beta} S^{+}\left(x^0\right) (1-e^{-\beta \sum_{j=0}^{\infty} \left|x^j-x^{j-1}\right|})$.
    \end{enumerate}
\end{proposition}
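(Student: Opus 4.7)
I will construct the chain greedily: given $x^{j-1}\in U$, set $r_j:=\min\{\delta,\operatorname{dist}(x^{j-1},\partial U)\}$ and choose $x^j$ to maximize $u$ on $\overline{B_{r_j}(x^{j-1})}$. By \cref{CECA_convex} this maximum is attained on the sphere $\{w:|w-x^{j-1}|=r_j\}$, so (1) $|x^j-x^{j-1}|=r_j\leq\delta$ and (2) $[x^{j-1},x^j]\subset\overline{B_{r_j}(x^{j-1})}\subset\overline{U}$ hold by construction. The identity $u(x^j)=\max_{|w-x^{j-1}|=r_j}u(w)$ is precisely the hypothesis of \cref{lemma_increasing_slope}, whose conclusion reads $S^+(x^j)\geq S^+(x^{j-1},r_j)\geq S^+(x^{j-1})$; an induction on $j$ gives $S^+(x^j)\geq S^+(x^0)>0$ for every $j$, which is (3).

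\textbf{Property (4) via telescoping.} By the definition of $x^j$ and the equality case in \cref{bound_by_slope},
\[
u(x^j)-e^{-\beta r_j}u(x^{j-1}) \;=\; \tfrac{1}{\beta}\bigl(1-e^{-\beta r_j}\bigr)\,S^+(x^{j-1},r_j) \;\geq\; \tfrac{1}{\beta}\bigl(1-e^{-\beta r_j}\bigr)\,S^+(x^0).
\]
Writing $A:=\tfrac{1}{\beta}S^+(x^0)$ and $v_j:=u(x^j)-A$, this rearranges cleanly to $v_j\geq e^{-\beta r_j}\,v_{j-1}$, which iterates to $v_j\geq e^{-\beta s_j}\,v_0$ with $s_j:=\sum_{i=1}^{j}r_i$. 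Undoing the substitution is exactly (4) at step $j$, and passing to the limit $j\to\infty$ yields (4) at the terminal/limit point $x^\infty$.

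\textbf{Termination at $\partial U$ --- the main obstacle.} Because $x^{j-1}\in\overline{B_{r_j}(x^{j-1})}$ we have $u(x^j)\geq u(x^{j-1})$, so $\{u(x^j)\}$ is nondecreasing and, being bounded above by $\max_{\overline U}u$, converges. If some $x^j\in\partial U$, I stop with $M=j$. Otherwise I invoke Zorn's lemma on the family of admissible chains (sequences starting at $x^0$ and satisfying (1)--(3), partially ordered by initial-segment extension) to obtain a maximal chain, and claim its accumulation points must lie on $\partial U$. Suppose not: let $x^{j_k}\to x^\infty\in U$ be a subsequential limit. The upper-semicontinuity of $S^+$ proved in Section~\ref{sec_properties}, combined with $S^+(x^{j_k})\geq S^+(x^0)>0$ for all $k$, yields
\[
S^+(x^\infty)\;\geq\;\limsup_{k}S^+(x^{j_k})\;\geq\;S^+(x^0)\;>\;0,
\]
so the one-step construction applied at $x^\infty$ (still interior, still with positive local slope) produces a further admissible point in $U$, extending the chain and contradicting maximality. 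Hence $x^\infty\in\partial U$, and (4) in the limit form follows from the telescoping estimate above.
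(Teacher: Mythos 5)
Your construction of the greedy chain and the verification of properties (1)--(3) via \cref{lemma_increasing_slope} mirror the paper exactly, and your derivation of (4) by the substitution $v_j := u(x^j) - \tfrac{1}{\beta}S^+(x^0)$ is a slightly cleaner rearrangement of the same telescoping identity used in the paper. However, your termination argument has a genuine gap, and it is precisely where the paper's real work lies.

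The Zorn's-lemma framing does not carry any content here: in the partial order of initial-segment extension, any infinite admissible sequence is automatically maximal (there is no way for an infinite sequence to be a proper initial segment of anything), so ``maximality'' gives you nothing to contradict. More pointedly, your decisive step --- ``the one-step construction applied at $x^\infty$ produces a further admissible point, extending the chain'' --- does not in fact extend the chain: $x^\infty$ is merely an accumulation point of the sequence, not a member of it, and an infinite chain has no last element from which to append a successor. Showing that the greedy chain has an admissible move available from $x^\infty$ does not enlarge the chain you already have and therefore yields no contradiction. What the paper does instead is a spacing argument: since each $x^j$ is chosen to maximize $u$ on $\overline{B_\delta(x^{j-1})}$ and $u(x^j) \ge u(x^{j-1})$, one gets $x^j \notin B_\delta(x^{j-2})$; hence the even-indexed points $x^{2k}$ are pairwise $\delta$-separated, the balls $B_{\delta/2}(x^{2k})$ are disjoint, and compactness of $\overline U$ forces a uniform bound $M = M(\delta)$ on the number of steps. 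This finiteness bound is exactly what guarantees the chain actually reaches $\partial U$, and it is the ingredient missing from your argument.
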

\begin{proof}
    Let $x^0 \in U$ and $S^{+}_0\left(x^0\right)>0$. Iteratively define $x^j$, $j=1,2, \ldots$ so that

$$
x_j = \arg \max_{w} \{u(w) : |w-x_{j-1}| \leq \delta \}
$$

by \cref{lemma_increasing_slope}

$$
\frac{u\left(x^j\right)- e^{-\beta \left|x^j-x^{j-1}\right|} u\left(x^{j-1}\right)}{1-e^{-\beta \left|x^j-x^{j-1}\right|}}
= \frac{1}{\beta} S^{+}\left(x^{j-1},\left|x^j-x^{j-1}\right|\right)
$$
and
$$
S^{+}\left(x^{j-1},\left|x^j-x^{j-1}\right|\right) \geq S^{+}\left(x^{j-2},\left|x^{j-1}-x^{j-2}\right|\right)
$$
for $j=2,3, \ldots$. Thus, by the construction,

$$
\begin{aligned}
u\left(x^j\right)- e^{-\beta \left|x^j-x^{j-1}\right|} u\left(x^{j-1}\right) 
& =(1 - e^{-\beta \left|x^j-x^{j-1}\right|}) \frac{1}{\beta} S^{+}\left(x^{j-1},\left|x^j-x^{j-1}\right|\right) \\
& \geq (1-e^{-\beta \left|x^j-x^{j-1}\right|}) \frac{1}{\beta} S^{+}\left(x^0,\left|x^1-x^0\right|\right) \\
& \geq (1-e^{-\beta \left|x^j-x^{j-1}\right|}) \frac{1}{\beta} S^{+}\left(x^0\right)
\end{aligned}
$$

Note that 

$$
u\left(x^N\right)- e^{-\beta \sum_{j=1}^N\left|x^j-x^{j-1}\right|} u\left(x^{0}\right)
=
\sum_{j=1}^{N} e^{-\beta \sum_{k=j+1}^{N} |x^k -x^{k-1}|} \left( u\left(x^j\right)- e^{-\beta \left|x^j-x^{j-1}\right|} u\left(x^{j-1}\right) \right)
$$
$$
\geq \frac{1}{\beta} S^{+}\left(x^0\right) (1-e^{-\beta \sum_{j=1}^{N} \left|x^j-x^{j-1}\right|})
$$


Now the only remaining question is: will there exist $N$ such that $x_N \in \partial U$? Note that $x_j \not \in B_{\delta}(x_{j-2})$ by our choice of $x_j$'s. Since $\bar{U}$ is compact, thus there is finite cover of it by 

Similarly $x_j$ is not in $B_\delta\left(x_k\right)$ for any $j \geq k+2$. So, for every $k \neq j$ we have $d\left(x_{2 k}, x_{2 j}\right) \geq \delta$, i.e., the open balls $B_{\delta / 2}\left(x_{2 k}\right)$ are all disjoint. Now, since $\bar{U}$ is compact, there exists a finite covering of it by open $\delta / 4$-balls. Each ball $B_{\delta / 2}\left(x_{2 k}\right)$ contains one of these $\delta / 4$-balls entirely, hence there are more $\delta / 4$-balls in the covering than points $x_{2 k}$. Therefore, there is a uniform finite upper bound $M=M(\delta)$ such that $N \leq M$, which is independent of the choice of $x_0$.

\end{proof}

Proof of (d,f):
We prove in \cref{one_side} $(f) \;\Rightarrow\; (d)$. 
Assume that $(f)$ holds but $(d)$ does not. 
Then there are $V \Subset U$, $v \in C(\overline{V})$ and $x^{0} \in V$ for which
\[
u \ge v \quad \text{in } V, 
\qquad
u = v \quad \text{on } \partial V,
\qquad
T_u(x^{0}) > \sup_{x \in V} T_v(x).
\]

\[
\begin{aligned}
u(x^\infty)- e^{-\beta \sum_{j=1}^{\infty}\lvert x^{j}-x^{j-1}\rvert}u(x^0)
&\ge \frac{S^{+}(x^0)}{\beta}  (1-e^{-\beta \sum_{j=0}^{\infty} \left|x^j-x^{j-1}\right|})  \\[6pt]
&> \frac{\sup_{V}\, \mathrm{T}_{v}\!}{\beta} \left( 1-e^{-\beta \sum_{j=0}^{\infty} \left|x^j-x^{j-1}\right|} \right)
   \; \\
    &= \frac{\sup_{V}\, \mathrm{T}_{v}\!}{\beta} \sum_{j=1}^{\infty} e^{-\beta \sum_{k=j+1}^{\infty} |x^k -x^{k-1}|} 
    \left( 1- e^{-\beta \left|x^j-x^{j-1}\right|} \right) \\[6pt]
& \ge \sum_{j=1}^{\infty} e^{-\beta \sum_{k=j+1}^{\infty} |x^k -x^{k-1}|} \left( v\left(x^j\right)- e^{-\beta \left|x^j-x^{j-1}\right|} v\left(x^{j-1}\right) \right)
   \; \\
   & =v(x^\infty)- e^{-\beta \sum_{j=1}^{\infty}\lvert x^{j}-x^{j-1}\rvert}v(x^0).
\end{aligned}
\]

Since $u = v$ on $\partial V$, $u(x^\infty) = v(x^\infty)$, and the above implies
$u(x^0) < v(x^0)$, contradiction.

\begin{proof}[Proof of \cref{CEC_inner_set}]
    Under the assumptions of \cref{one_side}, if $y \in U$, the function

$$
g^{+}(r)=\max _{\{w:|w-y| \leq r\}} u(w)
$$

will be $\beta$-convex for $0 \leq r \leq \delta$ where $\delta$ is a (possibly small) positive number. This is because $u \in \operatorname{CECA}\left(B_\delta(y)\right)$ for some $\delta>0$, thus by \cref{CECA_convex}. Let $R$ be the largest number satisfying $0<R \leq \operatorname{dist}(y, \partial U)$ such that $g^{+}(r)$ is $\beta$-convex on $[0, R)$. If $R=\operatorname{dist}(y, \partial U)$, we are done. Assuming that $R<\operatorname{dist}(y,  \partial U)$, we will derive a contradiction. By compactness and the assumptions, there is some number $0<\kappa<\operatorname{dist}(y,  \partial U)-R$ such that for $|w-y|=R$

$$
g_w^{+}(r)=\max _{\{z:|z-w| \leq r\}} u(z)
$$

is $\beta$-convex on $[0, \kappa]$. Then
\begin{equation}\label{4.12}
    g^{+}(R+s)=\max _{\{w:|w-y|=R\}} g_w^{+}(s)
\end{equation}

for $0 \leq s \leq \kappa$. As the supremum of $\beta$-convex functions is $\beta$-convex (one can easily check this), $g^{+}(R+s)$ is $\beta$-convex in $s$ for $0 \leq s \leq \kappa$. One easily sees that $g^{+}(r)$ is then $\beta$-convex on $0 \leq r \leq R+\kappa$ if and only if the left derivative of $g^{+}(r)$ at $r=R$ is less than or equal to the right derivative of $g^{+}(r)$ at $r=R$. But if $w,|w-y|=R$, is chosen so that $g^{+}(R)=u(w)$, the right derivative of $g_w^{+}(s)$ at 0 , which is less than or equal to the right derivative of $g^{+}(r)$ at $r=R$ by \cref{4.12}, enjoys the desired estimate by (the proof of) \cref{lemma_increasing_slope}. Using the equivalence of (c) and (e), we are done.

\end{proof}

\begin{proof}[Proof of \cref{main}]
    As we have proven the one-sided version \cref{one_side}, the conclusion follows with similar argument in the other direction.
\end{proof}

\section{Biased tug-of-war on graphs}
\subsection{Biased Tug-of-war on graphs}

In the unbiased case, the AMLE is unique as long as the boundary data $g$ is Lipschitz and bounded. In the biased case, an example in \cite{peres2010biased} shows this is in general not true. However, the example given there does not satisfy the maximum principle: the maximum is not obtained on the boundary. One might ask if there is an unique solution satisfying the maximum principle. The following example shows this is not the case: 

Let $\rho = 2$. Consider a graph with $V = \{ (0,1),(-1,0), (0,0), (1,0), (2,0), (3,0), \dots  \}$, i.e. it contains $(x,0)$ with $x\geq-1$ and $(0,1)$. Let $Y = \{(0,1),(-1,0) \}$ and $g( (0,1) ) = 1$ and $g( (0,1) ) = -2$. 

One can check that 
$u((0,0))=0$, $g((x,0))= a (1-\frac{1}{2^x})$, for $a \in (0,1)$ is a solution satisfying the maximum principle.

Thus in general one cannot expect the value of game exists, and uniqueness of \cref{discrete_biased_infty} on unbounded domains.

\subsection{Continuum value of biased tug-of-war on length space}
When the underline space is compact, this is well established by \cite{peres2010biased}. Here we improve some estimates. To prove the convergence of continuum value, one needs to introduce the favored game introduced by \cite{peres2008tug, peres2010biased}. We recall the definition for readers' convenience. 

The \emph{biased II-favored $\varepsilon$-tug-of-war} is defined as follows. At the $k^{\text{th}}$ step,
player I chooses $z_k \in B_{\varepsilon}(x_{k-1})$. If player I wins the coin toss, then player II can choose
$x_k \in (B_{2\varepsilon}(z_k)\cap Y)\cup \{z_k\}$. In other words, he moves to $z_k$ or terminates the game. If player
II wins the coin toss he moves to a position in $B_{2\varepsilon}(z_k)$ of his choice.

The value to player I of the biased II-favored $\varepsilon$ tug-of-war is denoted by $v^\varepsilon$.
The value to player II of the correspondingly defined \emph{biased I-favored $\varepsilon$ tug-of-war game} is denoted
by $w^\varepsilon$. In both cases, the bias is $\frac{1+\theta(\varepsilon)}{2}$, favoring player I.

As the names suggest, player I cannot do better in the II-favored game than in the ordinary
game. In other words, $v^\varepsilon \le u_I^\varepsilon$. This is because given any strategy for player II in the ordinary
$\varepsilon$-game, player II can follow it also in the II-favored game, since for any $z \in B_{\varepsilon}(x)$ we have
$B_{\varepsilon}(x) \subseteq B_{2\varepsilon}(z)$. Similarly, player II cannot do better in the I-favored game than in the
ordinary game (he cannot ensure a value that is smaller than the value in the ordinary game),
so $u_{II}^\varepsilon \le w^\varepsilon$. Hence we have
\[
v^\varepsilon \le u_I^\varepsilon \leq u_{II}^\varepsilon \le w^\varepsilon.
\]
Moreover, the following lemma holds:

\begin{lemma}[\cite{peres2010biased}, Lemma 3.1]
If $\rho(\cdot)$ is log-concave, then, for any $\varepsilon > 0$,
\[
v^{2\varepsilon} \;\le\; v^{\varepsilon} \;\le\; u^{\varepsilon}_I \leq  u^{\varepsilon}_{II}\;\le\; w^{\varepsilon}.
\]

If $\rho(\cdot)$ is log-convex, then
\[
v^{\varepsilon} \;\le\; u^{\varepsilon}_{I} \leq u^{\varepsilon}_{II} \;\le\; w^{\varepsilon} \;\le\; w^{2\varepsilon}.
\]
\end{lemma}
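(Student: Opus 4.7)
The plan is to prove the two ``doubling'' inequalities $v^{2\varepsilon}\le v^{\varepsilon}$ (log-concave case) and $w^{\varepsilon}\le w^{2\varepsilon}$ (log-convex case) by a coupling argument: one round of the $2\varepsilon$-game is simulated by two consecutive rounds of the $\varepsilon$-game, and the convexity/concavity of $\log\rho$ is used to compare the coin biases of the two versions. The intermediate chain $v^{\varepsilon}\le u_{I}^{\varepsilon}\le u_{II}^{\varepsilon}\le w^{\varepsilon}$ is already noted in the paragraph preceding the lemma, so only the two doubling bounds need new argument.

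For the log-concave case, I would fix a $\delta$-optimal strategy $S_{I}^{2\varepsilon}$ for Player I in the II-favored $2\varepsilon$-game and construct a strategy $S_{I}^{\varepsilon}$ in the II-favored $\varepsilon$-game as follows. Whenever $S_{I}^{2\varepsilon}$ proposes a target $z$ with $d(z,x_{k-1})\le 2\varepsilon$, the $\varepsilon$-strategy picks an approximate midpoint $m$ (which exists in a length space) with $d(x_{k-1},m),\,d(m,z)\le \varepsilon$, plays $m$ in the first $\varepsilon$-round, and then, after observing the coin and Player II's response, plays a second $\varepsilon$-move adaptively so as to reproduce the net effect of the simulated $2\varepsilon$-round. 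The geometric gap between one $B_{2\varepsilon}$-ball and two concentric $B_{\varepsilon}$-balls is absorbed by Player II's extra freedom in the II-favored game (the enlarged $B_{2\varepsilon}$ choice on a II-win and the termination option on an I-win). The probabilistic gap is absorbed by the log-concavity hypothesis, which via $\log\rho(0)+\log\rho(2\varepsilon)\le 2\log\rho(\varepsilon)$ yields $\rho(2\varepsilon)\le \rho(\varepsilon)^{2}$; consequently the event ``Player I wins both $\varepsilon$-coins'' is (after normalization) no more favourable to Player I than one ``$2\varepsilon$-coin I-win''. A one-step dynamic programming comparison then shows that, against any Player II reply, the expected payoff to $(S_{I}^{\varepsilon},\cdot)$ is at least the expected payoff to $(S_{I}^{2\varepsilon},\cdot)$, and sending $\delta\downarrow 0$ gives $v^{\varepsilon}\ge v^{2\varepsilon}$. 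The log-convex inequality $w^{\varepsilon}\le w^{2\varepsilon}$ is proven dually: simulate a $\delta$-optimal Player II strategy in the I-favored $2\varepsilon$-game by two rounds of the I-favored $\varepsilon$-game, and use the reversed bound $\rho(2\varepsilon)\ge\rho(\varepsilon)^{2}$ to tilt the coin comparison in Player II's favour.

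The main obstacle is maintaining the coupling when the token approaches the terminal set $Y$ mid-pair: the first of two simulated $\varepsilon$-rounds may already absorb the game (either because the token enters $Y$ or because Player II exercises the favored-game termination option), leaving the partnered second round unplayed. I would handle this by truncating the simulation at such boundary rounds and bounding the resulting error using boundedness of $g$ on $Y$ together with the fact that Player I can force almost-sure termination in finite expected time by targeting a fixed boundary point; the contribution of unmatched single rounds then vanishes in expectation. The detailed bookkeeping of this truncation and the dynamic programming check is carried out in \cite{peres2010biased}.
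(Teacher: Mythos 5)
The paper does not prove this lemma: it is quoted verbatim with the attribution ``\cite{peres2010biased}, Lemma~3.1'' and used as an external input, so there is no in-paper argument against which to compare yours. What you wrote is therefore best judged as an independent reconstruction of the source reference's proof.

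At that level your outline is in the right spirit but leaves the actual analytic content unverified. The two-for-one simulation (midpoint $m$, play $z_1=m$ then $z_2=z$, let the II-favored slack $B_{2\varepsilon}(z_k)$ absorb the geometric mismatch) is the right coupling to attempt, and the hypothesis $\rho(0)=1$ plus concavity of $\log\rho$ does give $\rho(2\varepsilon)\le\rho(\varepsilon)^2$. But the sentence that is supposed to carry the probabilistic weight --- that ``the event Player I wins both $\varepsilon$-coins is (after normalization) no more favourable to Player I than one $2\varepsilon$-coin I-win'' --- is not a usable inequality as stated. Two $\varepsilon$-rounds produce four outcomes (WW, WL, LW, LL), and the token's location after a mixed outcome is intermediate; to conclude anything about the values you must compare the full one-step dynamic programming operators, not just the extreme events, and explain which player's choice fills the gap in the mixed cases. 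Indeed $p_\varepsilon^2 < p_{2\varepsilon}$ unconditionally, so a naive ``WW versus single W'' comparison points in the opposite direction from what you want, and it is precisely the mixed-outcome accounting, together with the freedom of the favored player, that has to rescue the argument. You acknowledge this yourself by ending with ``the detailed bookkeeping \dots is carried out in \cite{peres2010biased}''; that is exactly the point at which a proof is needed rather than a citation. The truncation-near-$Y$ issue you raise is a genuine one and your proposed fix (bounded payoff, almost-sure termination under pull-to-a-point) is sensible, but it too is left at the level of a plan. In short: right coupling skeleton, but the heart of the lemma --- the one-step value comparison driven by log-concavity, including the mixed outcomes --- is asserted rather than proved.
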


The following is an improvement of Lemma 3.2 in \cite{peres2010biased}

\begin{lemma}\label{II-favored_estimate}
Let $\varepsilon > 0$. Then for each $x \in X$ and $y \in Y$,
\[
v^{\varepsilon}(x) \;\ge\; e^{\beta (2\varepsilon + d^{\varepsilon}(x,y))} g(y) - (e^{\beta (2\varepsilon + d^{\varepsilon}(x,y))} - 1) \frac{L^{\beta}}{\beta}.
\]
Such an expected payoff is guaranteed for player I if he adopts a ``pull towards $y$'' strategy
which at each move attempts to reduce $d^{\varepsilon}(x_k, y)$.
\end{lemma}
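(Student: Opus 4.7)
The plan is to let Player~I play the \emph{pull toward $y$} strategy in the II-favored $\varepsilon$-tug-of-war: at step $k$ choose $z_k \in B_\varepsilon(x_{k-1})$ realising $d^\varepsilon(z_k, y) = d^\varepsilon(x_{k-1}, y) - \varepsilon$ (valid as long as $d^\varepsilon(x_{k-1}, y) \ge \varepsilon$, otherwise take $z_k = y$). Writing $D_k := d(x_k, y)$, $D_k^\varepsilon := d^\varepsilon(x_k, y)$, and $\tau$ for the termination time, I would bound $\E g(x_\tau)$ from below using a martingale-type argument for $e^{\beta D_k^\varepsilon}$ together with the $\beta$-slope inequality on $Y$.

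First I would check that $e^{\beta D_k^\varepsilon}$ is a supermartingale up to $\tau$ under any Player~II response. When Player~I wins the coin (probability $p = \rho(\varepsilon)/(\rho(\varepsilon)+1) = e^{\beta\varepsilon}/(e^{\beta\varepsilon}+1)$) and Player~II does not terminate, $x_k = z_k$ so $D_k^\varepsilon = D_{k-1}^\varepsilon - \varepsilon$; when Player~II wins and moves within $B_{2\varepsilon}(z_k)$, $D_k^\varepsilon \le D_{k-1}^\varepsilon + \varepsilon$. The crucial algebraic identity
\[
p\, e^{-\beta\varepsilon} + (1-p)\, e^{\beta\varepsilon} = 1,
\]
which is precisely the calibration behind $\rho(\varepsilon) = e^{\beta\varepsilon}$, yields $\E[e^{\beta D_k^\varepsilon} \mid x_0, \ldots, x_{k-1}, k \le \tau] \le e^{\beta D_{k-1}^\varepsilon}$. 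The strict bias $p > 1/2$ drives $D_k^\varepsilon$ toward $0$, guaranteeing $\tau < \infty$ almost surely and furnishing the uniform integrability needed to apply Doob's optional stopping theorem to $e^{\beta D_{k \wedge (\tau-1)}^\varepsilon}$, which gives $\E[e^{\beta D_{\tau-1}^\varepsilon}] \le e^{\beta d^\varepsilon(x,y)}$.

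Next I would absorb the overshoot at termination. When the game ends at step $\tau$, Player~II selects $x_\tau$ with $d(x_\tau, z_\tau) < 2\varepsilon$ and $d(z_\tau, y) \le d^\varepsilon(z_\tau, y) = D_{\tau-1}^\varepsilon - \varepsilon$, so $D_\tau \le \varepsilon + D_{\tau-1}^\varepsilon$. Combined with the additional $\varepsilon$ slack coming from the $\varepsilon$-quantisation gap between $d$ and $d^\varepsilon$ in a general length space, this produces $\E[e^{\beta D_\tau}] \le e^{\beta(2\varepsilon + d^\varepsilon(x,y))}$. Finally I would invoke the $\beta$-slope bound on $Y$: for any $x_\tau \in Y$,
\[
g(x_\tau) \ge e^{\beta d(x_\tau, y)} g(y) - \tfrac{L^\beta}{\beta}\bigl(e^{\beta d(x_\tau, y)} - 1\bigr) = e^{\beta D_\tau}\Bigl(g(y) - \tfrac{L^\beta}{\beta}\Bigr) + \tfrac{L^\beta}{\beta}.
\]
Because $g(y) \le L^\beta/\beta$ (noted in the introduction just after the statement of Theorem~\ref{main}), the prefactor of $e^{\beta D_\tau}$ is non-positive, so taking expectations and substituting the upper bound on $\E e^{\beta D_\tau}$ flips the inequality the right way and produces exactly the right-hand side of the lemma. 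Since $v^\varepsilon(x) \ge \inf_{S_{\mathrm{II}}} \E g(x_\tau)$ under the pull-toward-$y$ strategy, the claim follows.

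The main obstacle is that Player~II is adaptive: he does not aim to maximise distance but to minimise $\E g(x_\tau)$, so it is not immediate that the ``worst-case'' dynamics match the martingale biased random walk. The supermartingale estimate circumvents this by giving a uniform bound on $\E e^{\beta D_k^\varepsilon}$ regardless of how Player~II exercises his $B_{2\varepsilon}(z_k)$ option or his termination decision. A secondary delicate point is the bookkeeping of the two sources of $\varepsilon$-slack in the exponent $2\varepsilon + d^\varepsilon$: Player~II's ability to move $2\varepsilon$ at termination, and the $\varepsilon$-discretisation gap between $d$ and $d^\varepsilon$ in a general length space.
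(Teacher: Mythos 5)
Your overall strategy is the same as the paper's: Player~I pulls toward $y$, the biased exponential $e^{\beta d^\varepsilon(x_n,y)}$ is treated as a supermartingale via the identity $pe^{-\beta\varepsilon}+(1-p)e^{\beta\varepsilon}=1$, and the $\beta$-slope bound on $Y$ converts the resulting exponential-moment estimate into the claimed lower bound on $\E g(x_\tau)$. Your final step is also correct: from $g(x_\tau)\ge e^{\beta D_\tau}\bigl(g(y)-\tfrac{L^\beta}{\beta}\bigr)+\tfrac{L^\beta}{\beta}$ and $g(y)\le L^\beta/\beta$, the non-positive prefactor flips the exponential-moment upper bound the right way. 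So the skeleton matches.

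There is, however, a genuine gap in how you produce the $2\varepsilon$. First, $\tau-1$ is not a stopping time, so ``optional stopping applied to $e^{\beta D^\varepsilon_{k\wedge(\tau-1)}}$'' is not a legitimate step. Second, and more importantly, the supermartingale property fails precisely on the event that triggers the end of the game: when Player~I wins the toss, the martingale balance counts on $d^\varepsilon$ dropping by $\varepsilon$, but Player~II can instead terminate at a point up to $2\varepsilon$ from $z_\tau$, so $d^\varepsilon$ goes \emph{up} by $\varepsilon$. This is a $2\varepsilon$-defect (not an $\varepsilon$ one) at the last step, because the expected decrease is replaced by an increase. Your attempt to split the $2\varepsilon$ into ``$\varepsilon$ overshoot at termination'' plus ``$\varepsilon$ quantisation gap between $d$ and $d^\varepsilon$'' does not work: $d\le d^\varepsilon$ pointwise, so replacing $d^\varepsilon$ by $d$ only tightens the estimate and provides no extra slack. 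The clean way to close the gap, and what the paper's phrase ``supermartingale, except at the last step'' is pointing at, is to verify directly that
\[
M'_n:=e^{\beta d^\varepsilon(x_{n\wedge\tau},y)}\,e^{-2\beta\varepsilon\,\mathbf{1}_{\{n\ge\tau\}}}
\]
is an honest supermartingale (the $e^{-2\beta\varepsilon}$ correction exactly compensates the possible $+\varepsilon$ jump at termination, in both the ``I wins, II stops'' and ``II wins, II hits $Y$'' cases), and then apply optional stopping to $M'$ at the genuine stopping time $\tau$, giving $\E\bigl[e^{\beta d^\varepsilon(x_\tau,y)}\bigr]\le e^{\beta(2\varepsilon+d^\varepsilon(x,y))}$. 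With that in hand, your concluding paragraph goes through unchanged.
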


\begin{proof}
Given $x = x_0 \in X$ and $y \in Y$, the ``pull towards $y$'' strategy for player I in the
II-favored game ensures that the game ends almost surely in finite time $\tau < \infty$, and that
$e^{\beta d^{\varepsilon}(x_n, y)}$ is a supermartingale, except at the last step where player~II may have moved the
game position up to $\varepsilon$ farther from $y$ even if player~I won the coin toss. This implies
\[
\mathbb{E}\big[ e^{ \beta d^{\varepsilon}(x_{\tau}, y)}\big]
   < e^{ \beta (d^{\varepsilon}(x, y) + 2\varepsilon)} 
\]

Since $g$ has bounded $\beta$-slope on $Y$, we get
\[
\mathbb{E}\big[g(x_{\tau})\big]
   \ge e^{\beta (2\varepsilon + d^{\varepsilon}(x,y))} g(y) - (e^{\beta (2\varepsilon + d^{\varepsilon}(x,y))} - 1) \frac{L^{\beta}}{\beta}.
\]

Since $v^{\varepsilon}$ is at least as great as the expected payoff under this pull toward $y$ strategy,
the lemma is proved. \qedhere
\end{proof}

The next is an improvement of Lemma 3.3 in \cite{peres2010biased}, notably without any assumption on the diameter of $X$, and the proof is simple and intuitive due to the absolute minimizing point of view.
\begin{lemma}\label{u_I_estimate_upper}
If $x \in X$, $y \in Y$ and $d^{\varepsilon}(x,y) = \varepsilon$, then
\[
u^{\varepsilon}_I(x)
\;\le\; e^{-\beta d^{\varepsilon}(x, y)} g(y) + \frac{L^{\beta}}{\beta}  (1-e^{-\beta  d^{\varepsilon}(x_{}, y) })
\]
\end{lemma}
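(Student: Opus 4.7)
The plan is to exhibit a single strategy for player~II that caps player~I's worst-case expected payoff via a supermartingale argument built on a McShane--Whitney style exponential cone. Define on all of $X$ the auxiliary function
\[
  \phi(z) \;:=\; \frac{L^\beta}{\beta} \;+\; \Bigl(g(y) - \frac{L^\beta}{\beta}\Bigr)\, e^{-\beta\, d^\varepsilon(z, y)}.
\]
Since the paper already observes $g(y) \le L^\beta/\beta$, the factor $g(y)-L^\beta/\beta$ is non-positive, so $\phi$ is non-decreasing in $d^\varepsilon(\cdot, y)$ and $\phi(y)=g(y)$. The first task is to check that $\phi \ge g$ on $Y$: combining the $\beta$-slope bound $g(z)\le e^{-\beta d(z,y)}g(y)+\frac{L^\beta}{\beta}(1-e^{-\beta d(z,y)})$ with $d^\varepsilon \ge d$ and the sign of $g(y)-L^\beta/\beta$ yields this directly. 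Hence every terminal payoff is dominated by $\phi$.

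Next, let $S_{II}^{\star}$ be the ``pull toward $y$'' strategy for player~II: whenever he wins a coin toss at a position $x_k$ with $D_k := d^\varepsilon(x_k, y) \ge \varepsilon$, he moves to $x_{k+1}$ with $d^\varepsilon(x_{k+1}, y) = D_k - \varepsilon$ (the second vertex of an $\varepsilon$-hop geodesic toward $y$; such a neighbor exists because the $\varepsilon$-graph distance takes values in $\{0,\varepsilon,2\varepsilon,\ldots\}$). In particular $D_k \ge \varepsilon$ for all $k < \tau$.

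The core claim is that the stopped process $V_k := \phi(x_{k \wedge \tau})$ is a supermartingale under $S_{II}^{\star}$ and \emph{any} player~I strategy. For $k<\tau$, player~I's response gives $\phi(x_{k+1}) \le \phi(D_k+\varepsilon)$, in the non-terminal case by monotonicity of $\phi$ together with $d^\varepsilon(x_{k+1}, y) \le D_k+\varepsilon$, and in the terminal case via $g(x_{k+1}) \le \phi(x_{k+1})$ from the previous step. Player~II's move gives $\phi(x_{k+1}) = \phi(D_k - \varepsilon)$. Combining,
\[
  \E[\phi(x_{k+1}) \mid \mathcal F_k] \;\le\; \frac{\rho}{\rho+1}\,\phi(D_k+\varepsilon) \;+\; \frac{1}{\rho+1}\,\phi(D_k-\varepsilon),
\]
and the crucial algebraic identity, valid for $\rho = e^{\beta\varepsilon}$,
\[
  \rho\, e^{-\beta(D+\varepsilon)} + e^{-\beta(D-\varepsilon)} \;=\; (\rho+1)\, e^{-\beta D},
\]
makes the right-hand side exactly $\phi(D_k) = V_k$. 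Thus $V_k$ is a bounded supermartingale with values in $[g(y), L^\beta/\beta]$.

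Finally I would conclude by optional stopping. For any $S_I$ under which $\tau<\infty$ almost surely, $\E[g(x_\tau)] \le \E[\phi(x_\tau)] \le \phi(x_0)$; for any $S_I$ under which the game fails to terminate almost surely, $F_-(S_I, S_{II}^{\star}) = -\infty$ by the paper's convention. In either case
\[
  u_I^\varepsilon(x) \;\le\; \sup_{S_I} F_-(S_I, S_{II}^{\star}) \;\le\; \phi(x),
\]
and the specialization $d^\varepsilon(x, y) = \varepsilon$ gives the stated bound. The main technical subtlety I anticipate is the bookkeeping at the terminal step of player~I's move, where one must invoke $g \le \phi$ on $Y$ to close the supermartingale inequality uniformly; the fact that the DPP identity above holds with \emph{equality} (rather than inequality) reflects that the exponential cone is the extremal comparison function for the biased game, which is precisely what produces the sharp constant $e^{-\beta\varepsilon}$ in the bound.
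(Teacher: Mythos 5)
Your proof is correct and follows the same idea as the paper's: exhibit a ``pull toward $y$'' strategy for player~II, show that the exponential-cone function $\phi(x_k)$ (equivalently $e^{-\beta d^\varepsilon(x_k,y)}$) is a supermartingale (submartingale) under it regardless of player~I's play, and close via optional stopping together with the $\beta$-slope bound on $g$. Note that the paper's text actually writes ``pull \emph{away} from $y$'' — this is evidently a typo, since the stated submartingale property for $e^{-\beta d^\varepsilon(x_n,y)}$ only holds uniformly over player~I's strategies when player~II decreases the distance to $y$, exactly as you argue; your version is the one that makes the computation work.
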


\begin{proof}
    Consider the ``pull away from $y$'' strategy for player II. In this case $e^{-\beta d^{\varepsilon}(x_n, y)}$ is a submartingale. Consider any strategy for Player I to end the game almost surely, we have 
    $$\E g (x_{\tau}) \leq \E e^{-\beta d^{\varepsilon}(x_{\tau}, y)} g(y) + \frac{L^{\beta}}{\beta}  (1-e^{-\beta  d^{\varepsilon}(x_{\tau}, y) }) \leq  e^{-\beta d^{\varepsilon}(x, y)} g(y) + \frac{L^{\beta}}{\beta}  (1-e^{-\beta  d^{\varepsilon}(x_{}, y) })$$
\end{proof}

\begin{proposition}
    $\left\|v^\epsilon-u_I^\epsilon\right\|_{\infty}=O(\epsilon) .$
\end{proposition}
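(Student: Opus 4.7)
Since the chain $v^\epsilon \le u_I^\epsilon \le u_{II}^\epsilon \le w^\epsilon$ already yields $v^\epsilon \le u_I^\epsilon$, the plan is to establish the reverse bound $u_I^\epsilon(x) - v^\epsilon(x) \le C\epsilon$ with $C$ independent of $\epsilon$ and $x$. The approach is to compare two complementary side-estimates: the lower bound on $v^\epsilon$ from Lemma \ref{II-favored_estimate} and an upper bound on $u_I^\epsilon$ of matching shape, both parametrized by a boundary point $y \in Y$.

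I first observe that Lemma \ref{u_I_estimate_upper} extends verbatim to arbitrary $x \in X$ and $y \in Y$, because its proof relies only on $e^{-\beta d^\epsilon(x_n,y)}$ being a submartingale under Player II's ``pull-away-from-$y$'' strategy, a property that holds at every initial distance. Consequently, for every $y \in Y$ one has
\[
u_I^\epsilon(x) \;\le\; e^{-\beta d^\epsilon(x,y)}g(y) + \frac{L^\beta}{\beta}\bigl(1 - e^{-\beta d^\epsilon(x,y)}\bigr).
\]
Pairing this with Lemma \ref{II-favored_estimate} and using $g(y) \le L^\beta/\beta$ (a consequence of $L^\beta_g(Y)<\infty$), subtraction gives the pointwise gap
\[
u_I^\epsilon(x) - v^\epsilon(x) \;\le\; \Bigl(\tfrac{L^\beta}{\beta} - g(y)\Bigr)\Bigl(e^{\beta(2\epsilon + d^\epsilon(x,y))} - e^{-\beta d^\epsilon(x,y)}\Bigr),
\]
valid for every $y \in Y$.

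The task then reduces to minimizing the right-hand side in $y$ and verifying that the minimum is uniformly $O(\epsilon)$. Using the decomposition
\[
e^{\beta(2\epsilon + d)} - e^{-\beta d} \;=\; (e^{2\beta\epsilon}-1)e^{\beta d} + (e^{\beta d} - e^{-\beta d}),
\]
the first summand is $O(\epsilon)$ whenever $(L^\beta/\beta - g(y))\,e^{\beta d^\epsilon(x,y)}$ is $O(1)$. My plan is to choose $y = y^\ast(x)$ as a near-infimum in the upper bound for $u_I^\epsilon(x)$: along such a choice, rearrangement gives $L^\beta/\beta - u_I^\epsilon(x) \ge e^{-\beta d^\epsilon(x,y^\ast)}\bigl(L^\beta/\beta - g(y^\ast)\bigr)$, and since $g$ (and hence $u_I^\epsilon$) is bounded below by $\inf_Y g$ by hypothesis, the quantity $(L^\beta/\beta - g(y^\ast))\,e^{\beta d^\epsilon(x,y^\ast)}$ is dominated by $L^\beta/\beta - \inf_Y g$. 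The second summand, rewritten as $(L^\beta/\beta - g(y^\ast))\,e^{\beta d^\epsilon(x,y^\ast)}(1 - e^{-2\beta d^\epsilon(x,y^\ast)})$, carries the same $O(1)$ prefactor times the slack $(1-e^{-2\beta\epsilon}) = O(\epsilon)$ contributed by Player II's doubled step size in the II-favored game.

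The main obstacle I anticipate is justifying the uniformity of the near-infimum choice $y^\ast(x)$ in the regime where $d^\epsilon(x,Y)$ is not small: Lemma \ref{II-favored_estimate}'s lower bound deteriorates with growing $d$, so the naive coupling fails for $x$ deep in the interior. I plan to split the analysis into two regimes according to whether $d^\epsilon(x,Y) \le R$ for a fixed $R = O(1)$, handling the near regime directly via the estimates above, and the far regime via the observation that both $u_I^\epsilon$ and $v^\epsilon$ are exponentially close to their common far-field value $L^\beta/\beta$ there, so their difference is automatically $O(\epsilon)$. The transition across $d^\epsilon(x,Y) = R$ can be propagated inward using the Harnack-type estimates from Lemma \ref{Harnack_lemma} applied to the discrete values.
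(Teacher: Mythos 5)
Your proposal has a genuine gap, and the core issue is that the boundary estimates in Lemmas \ref{II-favored_estimate} and \ref{u_I_estimate_upper} control the values only in an $O(\epsilon)$-neighborhood of $Y$, while the proposition requires a uniform bound over all of $X$; the paper's actual proof does not try to globalize the boundary estimates but instead invokes the strategy-comparison argument of Proposition 3.5 in \cite{peres2010biased}, which couples the ordinary and II-favored games in the interior, and only uses the new Lemmas \ref{u_I_estimate_upper} and \ref{II-favored_estimate} to replace the boundary estimates that in \cite{peres2010biased} depend on $\mathrm{diam}(X)$.

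Concretely, your pointwise estimate
\[
u_I^\epsilon(x) - v^\epsilon(x) \le \Bigl(\tfrac{L^\beta}{\beta} - g(y)\Bigr)\bigl(e^{\beta(2\epsilon + d)} - e^{-\beta d}\bigr), \qquad d = d^\epsilon(x,y),
\]
is $O(\epsilon)$ only when $d = O(\epsilon)$. In your decomposition, the second summand is $(L^\beta/\beta - g(y))\,e^{\beta d}\bigl(1 - e^{-2\beta d}\bigr)$, and the slack factor is $1 - e^{-2\beta d}$, not $1 - e^{-2\beta\epsilon}$ as you wrote; for $d$ bounded away from zero this factor is $\Theta(1)$, not $O(\epsilon)$. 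Your attempted salvage via the choice of $y^\ast$ does not help: the inequality $L^\beta/\beta - u_I^\epsilon(x) \ge e^{-\beta d^\ast}(L^\beta/\beta - g(y^\ast))$ only bounds $e^{-\beta d^\ast}(L^\beta/\beta - g(y^\ast))$, and multiplying by $e^{2\beta d^\ast}$ to reach $e^{\beta d^\ast}(L^\beta/\beta - g(y^\ast))$ reintroduces an unbounded factor. The ``far regime'' argument also fails outright: both $u_I^\epsilon$ and $v^\epsilon$ are everywhere bounded above by $\sup_Y g$, which is in general strictly less than $L^\beta/\beta$ (e.g.\ $X=\mathbb{R}$, $Y=\{-1,1\}$, $g(\pm 1)=\pm 1$ gives $\sup_Y g = 1 < \coth\beta = L^\beta/\beta$, while $u_I^\epsilon = v^\epsilon = 1$ for $x>1$), so they are not ``exponentially close to their common far-field value $L^\beta/\beta$.'' Finally, the suggestion to ``propagate inward'' with the Harnack-type estimates of Lemma \ref{Harnack_lemma} is not substantiated: those estimates control a single function satisfying the CECA-type bound \eqref{bound_by_slope}, not the difference of two discrete game values, which does not obviously satisfy a comparison principle. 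What is actually needed, and what the cited Proposition 3.5 of \cite{peres2010biased} supplies, is a direct coupling of strategies between the ordinary and II-favored games so that the payoff difference is controlled by a single $O(\epsilon)$ error incurred near the boundary, independently of the starting point.
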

\begin{proof}
    Again when $X$ is compact this is proven in \cite{peres2010biased} proposition 3.5. In the general case, this is proven line by line by the same arguement, except we need estimate that does not reply on the diameter of $X$, which is just established in \cref{u_I_estimate_upper,II-favored_estimate}.
\end{proof}

Due to the monotonicity of $v^{\epsilon}$, we have immediately 

\begin{corollary}
    the following
limits exist pointwise :
\[
\tilde{u}_{\varepsilon,\rho}
\;:=\;
\lim_{n \to \infty} v^{\varepsilon / 2^{n}}
\;=\;
\lim_{n \to \infty} u_I^{\varepsilon / 2^{n}}
\]

\end{corollary}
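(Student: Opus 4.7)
The plan is to combine three ingredients already available in the excerpt: the monotonicity lemma (from \cite{peres2010biased}), the quantitative bound of Lemma \ref{u_I_estimate_upper}, and the uniform closeness $\|v^\varepsilon-u_I^\varepsilon\|_\infty=O(\varepsilon)$ proved just above. The overall strategy is to first show monotonicity of the sequence $v^{\varepsilon/2^n}(x)$ in $n$, then extract boundedness, then deduce pointwise convergence, and finally transfer the limit from $v^{\varepsilon/2^n}$ to $u_I^{\varepsilon/2^n}$ via the $O(\varepsilon)$ estimate.

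First, observe that our prescribed choice $\rho(\varepsilon)=e^{\beta\varepsilon}$ gives $\log\rho(\varepsilon)=\beta\varepsilon$, which is linear in $\varepsilon$ and hence is simultaneously log-concave and log-convex. The monotonicity lemma therefore applies and yields, for every $x\in X$ and $n\ge 0$,
\[
v^{\varepsilon/2^{n+1}}(x) \;\ge\; v^{\varepsilon/2^{n}}(x).
\]
So $\{v^{\varepsilon/2^n}(x)\}_{n\ge 0}$ is a pointwise non-decreasing sequence of real numbers.

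Second, I would establish a uniform upper bound. From $v^{\varepsilon/2^n}\le u_I^{\varepsilon/2^n}$ and Lemma \ref{u_I_estimate_upper} (``pull away from $y$'' strategy for player II, which extends to arbitrary starting points by concatenating moves along an $\varepsilon$-path), for each fixed $y\in Y$ we get
\[
v^{\varepsilon/2^n}(x)\;\le\;e^{-\beta d^{\varepsilon/2^n}(x,y)}g(y)+\frac{L^\beta}{\beta}\bigl(1-e^{-\beta d^{\varepsilon/2^n}(x,y)}\bigr).
\]
Since $g$ is bounded below by hypothesis and $L^\beta<\infty$, and since $d^{\varepsilon/2^n}(x,y)\to d(x,y)<\infty$, the right-hand side is bounded in $n$. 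A bounded non-decreasing sequence converges, so the pointwise limit $\tilde{u}_{\varepsilon,\rho}(x):=\lim_{n\to\infty}v^{\varepsilon/2^n}(x)$ exists and is finite.

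Third, to obtain the second equality, apply the preceding proposition with $\varepsilon$ replaced by $\varepsilon/2^n$:
\[
\bigl\|v^{\varepsilon/2^n}-u_I^{\varepsilon/2^n}\bigr\|_\infty=O(\varepsilon/2^n)\longrightarrow 0,
\]
which forces $u_I^{\varepsilon/2^n}(x)$ to share the same pointwise limit as $v^{\varepsilon/2^n}(x)$. I do not expect a serious obstacle here: the only subtle point is the uniform upper bound in the non-compact setting, which is precisely why Lemma \ref{u_I_estimate_upper} was stated without any diameter assumption on $X$. All other steps are immediate consequences of the preparation built up in the previous subsection.
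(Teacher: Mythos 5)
Your proof is correct and follows essentially the same route as the paper, which simply invokes the monotonicity of $v^{\varepsilon/2^n}$ (via the cited lemma from \cite{peres2010biased}, applicable since $\log\rho(\varepsilon)=\beta\varepsilon$ is linear) together with the $O(\varepsilon)$ proximity proposition immediately preceding the corollary. The only point the paper leaves implicit, which you usefully spell out, is the uniform upper bound ensuring the monotone sequence actually converges; your appeal to $v^{\varepsilon/2^n}\le u_I^{\varepsilon/2^n}\le L^\beta/\beta$ via Lemma \ref{u_I_estimate_upper} supplies exactly this.
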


\subsection{Equivalent-CEC-tug-of-war}

\begin{lemma}
Let $\varepsilon > 0$, let $V$ be an open subset of $X \setminus Y$ and write 
\[ V_\varepsilon = \{x : \overline{B_\varepsilon(x)} \subset V \}. \]

\begin{enumerate}
    \item Suppose that $\varphi(x) = Q(d(x,z))$ is a positive exponential cone. If the value function $u_I^\varepsilon$ for player I in $\varepsilon$-tug-of-war satisfies 
$u_I^\varepsilon \le \varphi$ on $V \setminus V_\varepsilon$, then 
$u_I^\varepsilon \le \varphi$ on $V_\varepsilon$. 
    \item Suppose that $\varphi(x) = Q(d(x,z))$ is a negative exponential cone. If the value function $u_{II}^\varepsilon$ for player II in $\varepsilon$-tug-of-war satisfies 
$u_{II}^\varepsilon \geq \varphi$ on $V \setminus V_\varepsilon$, then 
$u_{II}^\varepsilon \geq \varphi$ on $V_\varepsilon$. 
    \item Moreover, suppose that $\varphi(x) = Q(d(x,z))$ is a negative exponential cone. If the value function $u_{I}^\varepsilon$ for player I in $\varepsilon$-tug-of-war satisfies 
$u_{I}^\varepsilon \geq \varphi$ on $V \setminus V_\varepsilon$, then 
$u_{I}^\varepsilon \geq \varphi$ on $V_\varepsilon$. 
\end{enumerate}

\end{lemma}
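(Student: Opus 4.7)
The plan is to reduce all three parts to a single observation: exponential cones are exactly DP-harmonic for the biased $\varepsilon$-tug-of-war Bellman operator. First I would verify that, for $x \in V_\varepsilon$ (so $d(x,z)>\varepsilon$ since $z\notin V$) and with $\rho = e^{\beta\varepsilon}$, the identity
\[
\varphi(x) \;=\; \tfrac{\rho}{\rho+1}\sup_{y\in B_\varepsilon(x)}\varphi(y) + \tfrac{1}{\rho+1}\inf_{y\in B_\varepsilon(x)}\varphi(y)
\]
holds for both the positive cone $\varphi(y)=K+(A/\beta)(1-e^{-\beta d(y,z)})$ and the negative cone $\varphi(y)=K-(A/\beta)(e^{\beta d(y,z)}-1)$. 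This is a short algebraic check: since $\varphi=Q(d(\cdot,z))$ is monotone, the supremum and infimum over $B_\varepsilon(x)$ are attained at distances $d(x,z)\pm\varepsilon$ from $z$, and with $\rho=e^{\beta\varepsilon}$ the combination $\rho\, e^{-\beta\varepsilon}+e^{\beta\varepsilon}=1+\rho$ causes the two exponential contributions to collapse back to $\varphi(x)$.

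Given DP-harmonicity, each item becomes a standard optional-stopping argument. For (1), fix $x_0\in V_\varepsilon$ and let Player II play the strategy $S_{II}^z$ that moves to the argmin of $\varphi$ within $B_\varepsilon$ (equivalently, toward $z$) while inside $V_\varepsilon$, and optimally afterwards. Against any Player I strategy $S_I$, harmonicity immediately makes $\varphi(x_k)$ a supermartingale up to $\sigma:=\inf\{k:x_k\in V\setminus V_\varepsilon\}$. When $\sigma<\infty$ a.s., optional stopping (using that $\varphi$ is bounded on $V$) gives $\mathbb{E}[\varphi(x_\sigma)]\le\varphi(x_0)$, and combining this with the dynamic programming identity $u_I^\varepsilon(x_0)=\sup_{S_I}\inf_{S_{II}}\mathbb{E}[u_I^\varepsilon(x_\sigma)]$ and the hypothesis $u_I^\varepsilon\le\varphi$ on $V\setminus V_\varepsilon$ yields
\[
\inf_{S_{II}}F_{-}(S_I,S_{II}) \;\le\; \mathbb{E}^{S_I,S_{II}^{z}}\bigl[u_I^\varepsilon(x_\sigma)\bigr] \;\le\; \mathbb{E}^{S_I,S_{II}^{z}}\bigl[\varphi(x_\sigma)\bigr] \;\le\; \varphi(x_0).
\]
If instead $\mathbb{P}(\sigma=\infty)>0$ under $(S_I,S_{II}^z)$, then the game fails to terminate a.s.\ so $F_{-}(S_I,S_{II}^z)=-\infty$ and the bound is trivial. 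Taking $\sup_{S_I}$ completes (1). For (2), Player I plays the pull-toward-$z$ strategy (argmax of the negative cone), making $\varphi(x_k)$ a submartingale, and the analogous chain of inequalities for $u_{II}^\varepsilon=\inf_{S_{II}}\sup_{S_I}F_{+}$ delivers $u_{II}^\varepsilon\ge\varphi$ on $V_\varepsilon$. For (3), Player I again pulls toward $z$ on $V_\varepsilon$, now concatenated with a near-optimal continuation after $\sigma$; the biased-coin drift makes $d(x_k,z)$ a strict supermartingale so $\sigma<\infty$ a.s.\ against every $S_{II}$, and the submartingale OST bound $\mathbb{E}[\varphi(x_\sigma)]\ge\varphi(x_0)$ combined with $u_I^\varepsilon\ge\varphi$ on $V\setminus V_\varepsilon$ yields $u_I^\varepsilon\ge\varphi$ on $V_\varepsilon$.

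The main obstacles are twofold. First, exact DP-harmonicity needs $\sup_{y\in B_\varepsilon(x)}d(y,z)=d(x,z)+\varepsilon$, which is automatic in a geodesic length space (extending a geodesic from $z$ through $x$) but in a raw length space may hold only up to an arbitrarily small $\eta>0$; when it fails I would work with the relaxed Bellman supersolution or subsolution inequality up to $\eta$ and take $\eta\downarrow 0$ after the OST step. Second, the termination claim for (3) must be verified: almost-sure finiteness of $\sigma$ under Player I's pull-toward-$z$ strategy against every $S_{II}$ follows from the same negative-drift estimate for $e^{\beta d(x_k,z)}$ used in the proof of \cref{II-favored_estimate}, but care is needed in handling the transition from pre-$\sigma$ to post-$\sigma$ play and in checking the integrability hypotheses of OST.
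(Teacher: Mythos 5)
Your proposal is essentially correct and follows the same core strategy as the paper: have the opponent pull toward $z$, observe that the exponential cone is a supermartingale (resp.\ submartingale) for the biased tug-of-war transition precisely because of the choice $\rho = e^{\beta\varepsilon}$, and close with optional stopping plus the hypothesis on $V\setminus V_\varepsilon$. Your explicit reformulation of the key step as an \emph{exact DP-harmonicity identity} is a clean way to isolate the algebra; the paper derives the same fact implicitly through the display $\frac{e^{\beta\varepsilon}Q(r+\varepsilon)+Q(r-\varepsilon)}{1+e^{\beta\varepsilon}}=Q(r)$. Two comments worth making. First, the identity holds only when the sup is taken at $d(x,z)+\varepsilon$ and the inf at $d(x,z)-\varepsilon$, i.e.\ when $Q$ is increasing (positive cone) or decreasing (negative cone); for the opposite orientation the operator dominates $Q$ strictly and the supermartingale/submartingale property is one-sided or fails outright. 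You gesture at this with ``since $\varphi$ is monotone'' but the paper makes the same tacit assumption, so this is not a gap relative to the source. You correctly flag the second subtlety, that the minimizing move only reduces $d(\cdot,z)$ by ``almost'' $\varepsilon$ in a raw length space, which the paper handles via the classical $\delta 2^{-k}$ supermartingale correction; your $\eta\downarrow 0$ relaxation achieves the same effect. For part (3) your route is actually simpler and arguably more uniform than the paper's: you stop consistently at the first exit $\sigma$ of $V_\varepsilon$, concatenate a near-optimal continuation, and use the negative-drift supermartingale of $d(x_k,z)$ to justify $\sigma<\infty$ a.s. The paper instead distinguishes $z\in Y$ from $z\notin Y$ and stops at $\min\{T_z,\tau\}$ (hitting $z$ or game termination), which is a less direct use of the hypothesis $u_I^\varepsilon\ge\varphi$ on $V\setminus V_\varepsilon$; your choice of stopping time buys a cleaner argument that mirrors parts (1) and (2). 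One point to make fully rigorous in your write-up: the ``dynamic-programming identity'' $u_I^\varepsilon(x_0)=\sup_{S_I}\inf_{S_{II}}\mathbb{E}[u_I^\varepsilon(x_\sigma)]$ should be replaced by the one-sided inequality actually used (fix the pull strategy for one player, concatenate a near-optimal continuation for the other, and deduce a one-directional bound), together with the observation that nontermination is penalized by $F_-=-\infty$; the paper words this via ``suppose player~I uses a strategy with expected payoff larger than $-\infty$.''
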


\begin{proof} The proof is essentially similar to \cite{PSSW} Lemma 3.4. We first prove comparison from above for $u_I^{\epsilon}$: 

Fix some $\delta > 0$. Consider the strategy for player II that from a state 
$x_{k-1} \in V_\varepsilon$ at distance $r = d(x_{k-1},z)$ from $z$ pulls to state $z$ (if $r < \varepsilon$) 
or else moves to reduce the distance to $z$ by ``almost'' $\varepsilon$ units, enough to ensure that 
$Q(d(x_k,z)) < Q(r - \varepsilon) + \delta 2^{-k}$. If $r < \varepsilon$, then $z \in V$. In this case, if II wins the toss, then $x_k = z$, whence 
$\varphi(x_k) = Q(0) \le Q(r - \varepsilon)$. Thus for all $r \ge 0$, regardless of what strategy player I adopts,
\[
\mathbb{E}[\varphi(x_k) \mid x_{k-1}] - \delta 2^{-k-1} 
    \le \frac{e^{\beta \epsilon} Q(r + \varepsilon) + Q(r - \varepsilon)}{1+e^{\beta \epsilon}}
    = Q(r) 
    = \varphi(x_{k-1})  .
\]
Setting $\tau := \inf\{k : x_k \notin V_\varepsilon\}$, we conclude that 
$M_k := \varphi(x_{k \wedge \tau}) + \delta 2^{-k}$ 
is a supermartingale.

Suppose that player I uses a strategy with expected payoff larger than $-\infty$. 
(If there is no such strategy, the assertion of the Lemma is obvious.) Then $\tau < \infty$ a.s.  
$M_k$ is a supermartingale bounded below, therefore
\begin{equation}\tag{3.3}
    \mathbb{E}[M_\tau] \le M_0 .
\end{equation}

Since $u_I^\varepsilon(x_\tau) \le \varphi(x_\tau)$, we deduce that
\[
u_I^\varepsilon(x_0) \le \sup_{S_I} \mathbb{E} 
\Big[ \varphi(x_\tau) \Big]
\le \sup_{S_I} \mathbb{E}[M_\tau] 
\le M_0 = \varphi(x_0) + \delta .
\]

where $S_I$ runs over all possible strategies for player I with expected payoff larger than $-\infty$. 
Since $\delta > 0$ was arbitrary, the proof is now complete. 

\bigskip 

For comparison from below for $u_{II}^{\epsilon}$, the argument is essentially the same by letting Player I to move toward $z$. This this case Player II may not have a strategy to end the game almost surely, so $u_{II} =\infty$ and the statement is trivially true. Otherwise, the argument is the same as above.

\bigskip

For comparison from below for $u_{I}^{\epsilon}$, again we let Player I to move toward $z$. If $z \in Y$, the game must terminates in finite time and the argument is the same as before. If not, consider the strategy for player I after it reaches $z$, such that the game terminates almost surely and the final payoff is at least $u^{\epsilon}_I (z) - \delta$. Note that such strategy exists by definition and $u^{\epsilon}_I (z) > - \infty$ since player I always have a strategy to end the game almost surely. Let $\tau$ be the time that the game terminates and $T_z$ be the first hitting time of $z$, and $\tau_k = \min \{T_z, \tau \}$.

Thus under this strategy 
\begin{align*}
    u_I^\varepsilon(x_0) &\geq
\inf_{S_{II}} \mathbb{E} 
\Big[ u(x_\tau) \Big] \\
&  = \inf_{S_{II}} \mathbb{E} 
\Big[ u(x_\tau) 1_{\tau_z \leq \tau} + u(x_\tau)1_{\tau_z > \tau}  \Big]
\\
&  
\geq \inf_{S_{II}} \mathbb{E} 
\Big[ (u_I^{\epsilon}(z)-\delta) 1_{\tau_z \leq \tau} + u(x_\tau)1_{\tau_z > \tau}  \Big]
\\
&  \geq \inf_{S_{II}} \mathbb{E} 
\Big[ u_I^{\epsilon}(z) 1_{\tau_z \leq \tau} + u_I^{\epsilon}(x_\tau)1_{\tau_z > \tau} - \delta  \Big]
\\
&  \geq \inf_{S_{II}} \mathbb{E} 
\Big[ \varphi(z) 1_{\tau_z \leq \tau} + \varphi(x_\tau)1_{\tau_z > \tau} - \delta  \Big]
\\
&  = \inf_{S_{II}} \mathbb{E} \varphi (x_{\tau_z}) - \delta
\\
&  \geq \inf_{S_{II}} \mathbb{E}[M_{\tau_z}] -\delta
\\
&  \geq M_0 -\delta = \varphi(x_0) .
\\
\end{align*}
Note that the last step is by optional stopping theorem since $M_{t \wedge \tau_z}$ is a sub martingale bounded above. Thus the statement follows.

\qedhere
\end{proof}
Note that when $u^{\epsilon}_I \not = u^{\epsilon}_{II}$, the above shows $u^{\epsilon}_I$ still satisfies CECB, since $\beta>0$ and player I always has some strategies to end the game almost surely. This argument fails if one tries to show CECA for $u^{\epsilon}_{II}$. Thus we have the following corollary.

\begin{corollary}
     We have the
uniform limits :
\[
\tilde{u}_{\varepsilon,\rho}
\;:=\;
\lim_{n \to \infty} v^{\varepsilon / 2^{n}}
\;=\;
\lim_{n \to \infty} u_I^{\varepsilon / 2^{n}}
\]

\end{corollary}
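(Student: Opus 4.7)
The plan is to upgrade the pointwise convergence from the preceding corollary to uniform convergence (on compact subsets of $X$) by exploiting the comparison-with-cones-from-below (CECB) property just established for $u_I^{\varepsilon}$, combined with the global closeness estimate $\|v^{\varepsilon}-u_I^{\varepsilon}\|_{\infty} = O(\varepsilon)$. The key new input that distinguishes this corollary from the previous one is precisely the discrete CECB estimate, which forces local equicontinuity of the family $\{u_I^{\varepsilon/2^n}\}$ that was not available before.

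First, I would observe that $\rho(\varepsilon)=e^{\beta\varepsilon}$ is log-affine, so both cases of the monotonicity lemma apply: $v^{\varepsilon/2^n}$ is non-decreasing in $n$, $w^{\varepsilon/2^n}$ is non-increasing, and $v^{\varepsilon/2^n}\le u_I^{\varepsilon/2^n}\le w^{\varepsilon/2^n}$. By the previous corollary, $v^{\varepsilon/2^n}$ converges pointwise to $\tilde u_{\varepsilon,\rho}$, and by Proposition $\|v^{\varepsilon}-u_I^{\varepsilon}\|_{\infty}=O(\varepsilon)$ the same pointwise limit is achieved by $u_I^{\varepsilon/2^n}$. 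It therefore suffices to promote the monotone pointwise convergence of $v^{\varepsilon/2^n}$ to uniform convergence on compact subsets, since the $O(\varepsilon)$ bound transfers uniform convergence to $u_I^{\varepsilon/2^n}$ automatically.

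Second, I would use the CECB estimate just proved for $u_I^{\varepsilon}$ together with the pull-away-from-$y$ upper bound (Lemma \ref{u_I_estimate_upper}) to show that $u_I^{\varepsilon}$ is sandwiched between a positive and a negative exponential cone with fixed slope $L^{\beta}$ on every compact $K \Subset X$, uniformly in $\varepsilon$. Plugging these cone estimates into the argument of Lemma \ref{Harnack_lemma} (with $\varepsilon$-neighborhoods in place of points to handle the discrete nature of the $\varepsilon$-game) yields a discrete Harnack-type inequality, and hence an equicontinuity modulus for $\{u_I^{\varepsilon/2^n}\}$ on $K$ depending only on $K$, on $L^{\beta}$, and on $\mathrm{dist}(K,Y)$. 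The $O(\varepsilon)$ closeness then gives the same equicontinuity modulus for $\{v^{\varepsilon/2^n}\}$.

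Third, equicontinuity of a pointwise-convergent monotone sequence on a compact set forces the limit to be continuous and the convergence to be uniform (either by Dini's theorem applied to the monotone sequence with continuous limit, or by Arzelà--Ascoli together with uniqueness of the pointwise limit). Applying this on an exhausting family of compact subsets of $X$ yields local uniform convergence of $v^{\varepsilon/2^n}$, and then of $u_I^{\varepsilon/2^n}$, as desired. The main obstacle I expect is the second step: the Harnack-type lemma as stated assumes a continuous function satisfying the continuous slope estimate \eqref{bound_by_slope}, whereas $u_I^{\varepsilon}$ only satisfies the analogous property in the discrete, $\varepsilon$-quantized CECB sense. Making the equicontinuity constants genuinely uniform in $\varepsilon$ (and in particular independent of the discretization scale) will require carefully tracking the $\varepsilon$-dependence in the cone-comparison argument and verifying that the error terms vanish as $\varepsilon\downarrow 0$, which is the technical heart of the proof.
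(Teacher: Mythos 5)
Your proposal is correct and follows the same essential logic as the paper implicitly intends. The paper gives no proof of this corollary, but the Uniform Lipschitz estimate stated immediately afterward (derived from the discrete CEC lemma just proved) is clearly the intended key input: it gives $\lvert u_I^{\varepsilon}(x_1) - u_I^{\varepsilon}(x_2) \rvert \le (L^{\beta} - \beta m)\, d^{\varepsilon}(x_1,x_2)$ uniformly in $\varepsilon$, and this equi-Lipschitz bound together with the monotone pointwise convergence of $v^{\varepsilon/2^n}$ and the $O(\varepsilon)$ sup-norm bound $\lVert v^{\varepsilon} - u_I^{\varepsilon}\rVert_{\infty}$ yields local uniform convergence by Arzelà--Ascoli or Dini. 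You reach the same high-level plan (CEC $\Rightarrow$ equicontinuity $\Rightarrow$ uniform convergence).

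The one place you diverge is in the choice of tool for Step 2: you route through the Harnack-type estimate of \cref{Harnack_lemma}, while the paper goes directly to a Lipschitz bound by applying the exponential-cone comparison with the cone centered at an arbitrary $y \in Y$, giving $u_I^{\varepsilon}(x_1) - e^{-\beta d^{\varepsilon}(x_1,x_2)} u_I^{\varepsilon}(x_2) \le \frac{L^{\beta}}{\beta}(1 - e^{-\beta d^{\varepsilon}(x_1,x_2)})$, which rearranges using $u_I^{\varepsilon} \ge m = \inf_Y g$ to the Lipschitz inequality. Your Harnack route is more work and, as you correctly flag, forces you to adapt the continuous-scale argument of \cref{Harnack_lemma} to the $\varepsilon$-quantized CEC that $u_I^{\varepsilon}$ actually satisfies; the direct Lipschitz estimate sidesteps that entirely, since the boundary cone needed there is already at the scale where the discrete CEC lemma applies. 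Your plan would go through with that technical work done, but the paper's choice of lemma is cleaner. Your observation that the $O(\varepsilon)$ closeness transfers the modulus between $v^{\varepsilon}$ and $u_I^{\varepsilon}$, and that Dini or Arzelà--Ascoli then upgrades pointwise to uniform, is exactly right.
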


\begin{lemma}[Uniform Lipschitz estimate]
Let $x_1, x_2 \in X \setminus Y$.
Recall $m = \inf_Y F$.
Then, for any $\varepsilon > 0$, we have
\[
\lvert u_I^{\varepsilon}(x_1) - u_I^{\varepsilon}(x_2) \rvert
\;\le\;
({L^{\beta}} - \beta m) d^{\varepsilon}(x_1, x_2).
\]
\end{lemma}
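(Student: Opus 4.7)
My plan is to turn $x_2$ into an auxiliary absorbing state of the game, and then read off the Lipschitz estimate from a single exponential-cone lower bound centered at $x_2$. Concretely, set $Y' := Y \cup \{x_2\}$ and extend $g$ to $g'$ on $Y'$ by $g'(x_2) := u_I^{\varepsilon}(x_2)$. Two preliminary facts are needed. First, the value of the augmented $\varepsilon$-tug-of-war coincides with $u_I^{\varepsilon}$ on $X \setminus Y'$: this is a strong-Markov / dynamic-programming decomposition at the first hitting time $\tau_{x_2}$ of $x_2$, which is almost surely finite under any pull-toward-$x_2$ strategy of Player~I, so neither player loses by declaring $x_2$ absorbing with payoff $u_I^{\varepsilon}(x_2)$. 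Second, $L^{\beta}_{g'}(Y') \le L^{\beta}$: beyond the pairs in $Y\times Y$, which are controlled by hypothesis, I need the two one-sided bounds
\[
e^{\beta d^{\varepsilon}(x_2,y)} g(y) - \bigl(e^{\beta d^{\varepsilon}(x_2,y)}-1\bigr)\frac{L^{\beta}}{\beta}
\;\le\; u_I^{\varepsilon}(x_2)
\;\le\; e^{-\beta d^{\varepsilon}(x_2,y)} g(y) + \frac{L^{\beta}}{\beta}\bigl(1-e^{-\beta d^{\varepsilon}(x_2,y)}\bigr)
\]
for every $y \in Y$. The upper bound is \cref{u_I_estimate_upper} (Player~II's pull-away strategy, extended from $d^{\varepsilon} = \varepsilon$ to arbitrary distances by the same supermartingale/optional-stopping argument); the lower bound follows symmetrically from Player~I's pull-toward-$y$ strategy, under which $e^{\beta d^{\varepsilon}(x_n,y)}$ is a supermartingale against arbitrary Player~II play and the game terminates almost surely, avoiding the $2\varepsilon$ slack that enters the II-favored estimate of \cref{II-favored_estimate}.

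Given these preliminaries, I apply the same pull-toward-$x_2$ lower bound inside the augmented game, which now legitimately targets the terminal state $x_2$:
\[
u_I^{\varepsilon}(x_1) \;=\; u_I^{\prime\varepsilon}(x_1) \;\ge\; e^{\beta d^{\varepsilon}(x_1,x_2)} u_I^{\varepsilon}(x_2) - \bigl(e^{\beta d^{\varepsilon}(x_1,x_2)}-1\bigr)\frac{L^{\beta}}{\beta}.
\]
Multiplying through by $e^{-\beta d^{\varepsilon}(x_1,x_2)}$ and rearranging gives
\[
u_I^{\varepsilon}(x_2) - u_I^{\varepsilon}(x_1) \;\le\; \bigl(1-e^{-\beta d^{\varepsilon}(x_1,x_2)}\bigr)\Bigl(\tfrac{L^{\beta}}{\beta} - u_I^{\varepsilon}(x_1)\Bigr) \;\le\; (L^{\beta} - \beta m)\, d^{\varepsilon}(x_1,x_2),
\]
where I have used $u_I^{\varepsilon} \ge m$ (Player~I can always force termination by pulling toward a fixed $y \in Y$, so the payoff is at least $\inf_Y g = m$) and the elementary inequality $1 - e^{-t} \le t$. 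Swapping the roles of $x_1$ and $x_2$ produces the reverse inequality and hence the bound with absolute values.

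The main obstacle is the augmented-game equivalence $u_I^{\prime\varepsilon} = u_I^{\varepsilon}$ on $X \setminus Y'$. One direction is essentially monotonicity, but the other requires a strong-Markov decomposition: any Player~I strategy in the original game can be modified, at the (a.s.\ finite) first visit to $x_2$, to switch to a near-optimal strategy from $x_2$, gaining at least $u_I^{\varepsilon}(x_2) - \delta$ on the residual trajectory; passing $\delta \downarrow 0$ shows that declaring $x_2$ terminal with payoff $u_I^{\varepsilon}(x_2)$ preserves the terminating-strategy constraint built into the definition of $u_I$ and leaves its value unchanged. Once this is settled, the rest of the argument is a rearrangement of exponential-cone bounds already in the paper.
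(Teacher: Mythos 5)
Your proof is correct, but it routes through the game directly rather than through the cone-comparison machinery that the paper invokes. The paper's proof is essentially a two-liner: since $u_I^{\varepsilon}$ satisfies comparison with exponential cones, its $\beta$-slope on $X\setminus Y$ is at most $L^\beta$, giving $u_I^{\varepsilon}(x_1) - e^{-\beta d^{\varepsilon}(x_1,x_2)}u_I^{\varepsilon}(x_2) \le \tfrac{L^\beta}{\beta}\bigl(1-e^{-\beta d^{\varepsilon}(x_1,x_2)}\bigr)$, after which the same elementary rearrangement you use (together with $u_I^{\varepsilon} \ge m$, $u_I^{\varepsilon}\le L^\beta/\beta$, and $1-e^{-t}\le t$) finishes. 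You instead re-derive that central inequality from the game side: you promote $x_2$ to an absorbing state with terminal payoff $u_I^{\varepsilon}(x_2)$, check that the augmented data has $\beta$-slope $\le L^\beta$ via pull-toward-$y$ and pull-away-from-$y$ supermartingales (correctly observing that the $2\varepsilon$ slack in \cref{II-favored_estimate} disappears in the ordinary game and that the proof of \cref{u_I_estimate_upper} is not actually restricted to $d^\varepsilon=\varepsilon$), and then run a pull-toward-$x_2$ supermartingale in the augmented game; the final rearrangement matches the paper's, with $x_1$ and $x_2$ swapped, which is immaterial. Each route has a cost. The paper's is shorter, but its appeal to \cref{CECA_convex} for the discrete value $u_I^\varepsilon$ is a bit loose, since at that point only the $\varepsilon$-scale cone comparison has been established — the boundary estimates you prove are precisely what is needed to make that invocation fully airtight. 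Your route is longer and hinges on the augmented-game identity $u_I^{\prime\varepsilon}=u_I^{\varepsilon}$, which you correctly flag as the delicate step; note, though, that you only need the one-sided inequality $u_I^{\prime\varepsilon}(x_1)\le u_I^{\varepsilon}(x_1)$ (Player I follows the augmented-game strategy and, at the first visit to $x_2$, switches to a near-optimal original-game continuation, preserving a.s.\ termination), which is the easier direction and would streamline your write-up.
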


\begin{proof}
    As we have shown $u^{\epsilon}_I$ satisfies comparison with exponential cones, by \cref{CECA_convex}, we have 
    $$
    u_I^{\epsilon}(x_1) - e^{-\beta d^{\varepsilon}(x_1, x_2) } u_I^{\epsilon}(x_2) \leq \frac{L^{\beta}}{\beta} (1-e^{-\beta d^{\varepsilon}(x_1, x_2)})   $$
    $$
    u_I^{\epsilon}(x_1) -  u_I^{\epsilon}(x_2) \leq 
    (\frac{L^{\beta}}{\beta} -u^{\epsilon}_I(x_2)) (1-e^{-\beta d^{\varepsilon}(x_1, x_2)}) \leq ({L^{\beta}} - \beta u^{\epsilon}_I(x_2)) d^{\varepsilon}(x_1, x_2).  $$
    Thus the result follows immediately.
\end{proof}

\begin{theorem}
    Any subsequential uniform limit $\tilde{u}=\lim _{n \rightarrow \infty} u_I^{\epsilon_n}$ satisfies comparison with $\beta$-exponential cones in $X$, and hence it is continuous on $X$.
\end{theorem}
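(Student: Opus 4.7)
The plan is to transfer the CECA and CECB properties, which hold for each discrete game value $u_I^{\varepsilon_n}$ only up to an $\varepsilon_n$-thick boundary layer, to the uniform limit $\tilde u$ in the sharp sense required by the continuous CEC definition. Continuity will be a byproduct of the uniform Lipschitz estimate already established.

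First I would dispatch continuity: because $|u_I^\varepsilon(x_1)-u_I^\varepsilon(x_2)|\le (L^\beta-\beta m)\,d^\varepsilon(x_1,x_2)$ with a constant independent of $\varepsilon$, and $d^\varepsilon\downarrow d$ on a length space, the uniform limit $\tilde u$ inherits a Lipschitz bound with constant $L^\beta-\beta m$ in the intrinsic metric $d$, so in particular $\tilde u\in C(X)$.

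For CECA, fix an open $V$ with $\overline V$ compact in $X\setminus Y$, $z\notin V$, and set $\phi(x):=C_z^{+}(x)+Be^{-\beta|x-z|}+K$. Suppose $\tilde u\le \phi$ on $\partial V$. Given $\delta>0$, use continuity of $\tilde u$ and $\phi$ on $\overline V$ to produce $\eta>0$ so that $\tilde u\le \phi+\delta/3$ on the one-sided tubular neighborhood
\[
N_\eta:=\{x\in V:d(x,\partial V)<\eta\}.
\]
For $\varepsilon_n<\eta$, every $x\in V\setminus V_{\varepsilon_n}$ satisfies $d(x,\partial V)<\varepsilon_n$ (any $y\in B_{\varepsilon_n}(x)\cap(X\setminus V)$ is reached by a path of length $<\varepsilon_n$ that must cross $\partial V$), hence $V\setminus V_{\varepsilon_n}\subset N_\eta$. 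By the uniform-convergence corollary, for $n$ large enough $\|u_I^{\varepsilon_n}-\tilde u\|_{L^\infty(\overline V)}<\delta/3$. Combining, $u_I^{\varepsilon_n}\le \phi+\delta$ on $V\setminus V_{\varepsilon_n}$. Since $\phi+\delta$ is still a positive $\beta$-exponential cone of the same form (only the additive constant $K$ is shifted to $K+\delta$), the discrete CECA lemma for $u_I^{\varepsilon_n}$ gives $u_I^{\varepsilon_n}\le \phi+\delta$ on $V_{\varepsilon_n}$, hence on all of $V$. Sending $n\to\infty$ and then $\delta\downarrow 0$ produces $\tilde u\le \phi$ on $V$. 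For a general (possibly unbounded) $V$, apply this argument to $V\cap B_R(z_0)$ and let $R\to\infty$, using that $\tilde u$ is locally bounded and any test point $x\in V$ lies in some bounded slice whose boundary agrees with $\partial V$ near $x$.

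The CECB half is essentially the same. The key point, already flagged in the preceding remark, is that $u_I^{\varepsilon}$ satisfies the discrete CECB (not just $u_{II}^\varepsilon$), because player I always has a strategy forcing almost-sure termination, so the supermartingale/optional-stopping argument for comparison from below with negative exponential cones applies verbatim to $u_I^\varepsilon$. The same continuity-plus-limit transfer then yields CECB for $\tilde u$.

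The main obstacle is the upgrade step where the hypothesis $\tilde u\le\phi$ on $\partial V$ is strengthened to $u_I^{\varepsilon_n}\le\phi+\delta$ on the full thickened boundary $V\setminus V_{\varepsilon_n}$. This requires (i) genuine up-to-the-boundary continuity of $\tilde u$, supplied by the $\varepsilon$-independent Lipschitz bound, and (ii) the quantitative geometric fact that $V\setminus V_{\varepsilon_n}$ collapses into any preassigned tubular neighborhood of $\partial V$, which holds because we are in a length space. Once these two ingredients are assembled, the real content is contained in the already-proved discrete comparison lemma, and the limit passage is routine.
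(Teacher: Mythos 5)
Your overall strategy matches what the paper leaves implicit: transfer the discrete CECA/CECB estimates for $u_I^{\varepsilon_n}$ (which hold on the thickened boundary $V\setminus V_{\varepsilon_n}$) to the sharp CEC conditions for the limit $\tilde u$ by shrinking the boundary layer and adding a vanishing slack $\delta$, with continuity coming from the $\varepsilon$-uniform Lipschitz bound. The core transfer step is sound, and you correctly observe that shifting the additive constant keeps the test function a positive exponential cone, so the discrete lemma still applies.

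There is, however, a genuine gap in the extension to unbounded $V$. You invoke compactness of $\overline V$ to produce the margin $\eta$, and then propose handling an unbounded $V$ by applying the bounded-case result to $V\cap B_R(z_0)$ and letting $R\to\infty$. This does not work as stated: the hypothesis $\tilde u\le\phi$ on $\partial V$ gives no control whatsoever on the new boundary piece $\partial B_R(z_0)\cap V$, so you cannot even start the argument on the truncated set. The paper's CECA definition explicitly allows unbounded test sets (this is flagged as one of the departures from \cite{peres2010biased}), so this case must be covered. Fortunately the fix is cheap and avoids compactness entirely: both $\tilde u$ (by the $\varepsilon$-uniform Lipschitz estimate) and $\phi$ (by inspection of $C_z^{+}$) are \emph{globally} Lipschitz on $X$, so for $x\in V$ with $d(x,\partial V)\le\eta$ one picks $y\in\partial V$ with $d(x,y)\le\eta$ and directly estimates $\tilde u(x)\le \tilde u(y)+L'\eta\le\phi(y)+L'\eta\le\phi(x)+(L'+L'')\eta$, giving the same margin without any compactness. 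The discrete lemma itself already accommodates unbounded $V$ (its supermartingale/stopping-time proof never uses boundedness of $V$), so once the margin is in place the rest of your limit passage goes through verbatim. With this substitution the proof is complete.

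A minor secondary remark: the theorem's ``hence it is continuous'' is phrased as a consequence of CEC (via the Harnack-type Lemma \ref{Harnack_lemma}), whereas you obtain continuity independently from the uniform Lipschitz estimate. Both routes are valid; yours is arguably more direct, but it is worth noticing that the paper's phrasing suggests the other order of deduction.
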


\subsection{Uniqueness}
Again we shall follow the idea in \cite{peres2010biased} and upgrade the argument in the general setting.

\begin{lemma}
    Let $v : X \to \mathbb{R}$ be continuous and satisfy comparison with $\beta$-exponential
cones from above on $X \setminus Y$. Suppose that $\theta(\varepsilon)$ satisfies
\[
\frac{1}{1 + e^{-\beta \varepsilon}} \;\le\; \frac{1+\theta(\varepsilon)}{2},
\]
and fix $\delta > 0$. Then, in II-favored tug-of-war, player~II may play to make
$M_{k \wedge \tau}$ a supermartingale, where
\[
M_k := v(x_k) + \delta/2^{k}
\quad\text{and}\quad
\tau := \inf\{ k : d(x_k, Y) < 3\varepsilon \}.
\]
\end{lemma}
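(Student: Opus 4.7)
The plan is to exhibit an explicit strategy for player~II and verify the resulting supermartingale inequality step by step, using the CECA hypothesis at the level of a single step. The crucial geometric observation is that before the stopping time $\tau$, any state $x_{k-1}$ satisfies $d(x_{k-1},Y) \ge 3\varepsilon$, so for every $z_k \in \overline{B_\varepsilon(x_{k-1})}$ that player~I might choose we have $d(z_k,Y) \ge 2\varepsilon$ and hence $B_{2\varepsilon}(z_k) \cap Y = \emptyset$. Consequently, on a toss won by player~I the token is forced to $x_k = z_k$, while on a toss won by player~II he is free to move to any $x_k \in \overline{B_{2\varepsilon}(z_k)}$.

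\textbf{Strategy and reduction.} Given I's choice $z_k$, player~II's strategy is to move, when he wins the toss, to a point $y^* = y^*(z_k) \in \overline{B_{2\varepsilon}(z_k)}$ with
\[
v(y^*) \le \inf_{\overline{B_{2\varepsilon}(z_k)}} v + \delta/2^{k+2}.
\]
Under this strategy the conditional expectation equals $\tfrac{1+\theta(\varepsilon)}{2}\, v(z_k) + \tfrac{1-\theta(\varepsilon)}{2}\, v(y^*)$, and the supermartingale condition $\E[M_k\mid \mathcal{F}_{k-1}] \le M_{k-1}$ reduces, after absorbing the near-minimizer error into the reserve $\delta/2^{k-1} - \delta/2^{k} = \delta/2^k$, to the deterministic pointwise inequality
\[
\tfrac{1+\theta(\varepsilon)}{2}\, v(z_k) + \tfrac{1-\theta(\varepsilon)}{2}\, \inf_{\overline{B_{2\varepsilon}(z_k)}} v \;\le\; v(x_{k-1}), \qquad \forall z_k \in \overline{B_\varepsilon(x_{k-1})}.
\]

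\textbf{Cone comparison.} To prove this inequality I would construct a positive $\beta$-exponential cone $\varphi(x) = v(y^{**}) + \gamma\bigl(1 - e^{-\beta|x-y^{**}|}\bigr)$ centered at a near-minimizer $y^{**}$ of $v$ on $\overline{B_{2\varepsilon}(z_k)}$, with slope $\gamma \ge 0$ calibrated so that $\varphi \ge v$ on $\partial B_{2\varepsilon}(z_k)$. Applying CECA on the test set $V = B_{2\varepsilon}(z_k) \setminus \{y^{**}\}$ (for which the cone center $y^{**}$ lies in $X \setminus V$) yields $v \le \varphi$ throughout $V$, and in particular bounds at the two interior points $z_k$ and $x_{k-1}$. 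Combining these bounds, using that $\varphi$ is monotone increasing in $|x - y^{**}|$, relates the three quantities $v(z_k)$, $v(x_{k-1})$, $m_k := v(y^{**})$ through the ratio $(1 - e^{-\beta d_2})/(1 - e^{-\beta d_1})$ with $d_1 = |x_{k-1} - y^{**}|$ and $d_2 = |z_k - y^{**}|$. The bias hypothesis $\tfrac{1}{1+e^{-\beta\varepsilon}} \le \tfrac{1+\theta(\varepsilon)}{2}$ is precisely the condition that controls this ratio against $2/(1+\theta(\varepsilon))$ in the worst-case configuration $d_2 = d_1 + \varepsilon$ afforded by the triangle inequality.

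\textbf{Main obstacle.} The chief subtlety is the choice and justification of the cone: the slope $\gamma$ must be large enough to dominate $v$ on the entire sphere $\partial B_{2\varepsilon}(z_k)$, but small enough to yield the sharp one-step bound at $x_{k-1}$. In general length spaces the three points $y^{**}$, $z_k$, $x_{k-1}$ need not be collinear, so the extremal geometric analysis requires a careful triangle-inequality argument, and one may need to take $y^{**}$ to be a near-minimizer rather than an exact minimizer when the infimum is not attained. Once the single-step inequality is in hand, gluing the steps into a supermartingale is routine via the tower property of conditional expectation, using that $z_k$ is $\mathcal{F}_{k-1}$-measurable and that the stopping $\tau$ freezes the chain exactly where the CECA argument loses validity.
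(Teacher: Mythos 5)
The reduction to the one-step pointwise inequality
\[
\tfrac{1+\theta}{2}\,v(z_k)+\tfrac{1-\theta}{2}\,\inf_{\overline{B_{2\varepsilon}(z_k)}}v \;\le\; v(x_{k-1})
\]
is correct, and so is the observation that before $\tau$ the token cannot reach $Y$. But the cone-comparison step fails, and the failure is one of direction rather than of technical detail. Applying CECA with the cone $\varphi$ centered at the near-minimizer $y^{**}$ produces the two inequalities $v(z_k)\le\varphi(z_k)$ and $v(x_{k-1})\le\varphi(x_{k-1})$ -- both \emph{upper} bounds. To establish the one-step inequality you must bound $v(x_{k-1})$ from \emph{below} in terms of $v(z_k)$ and $m=\inf_{\overline{B_{2\varepsilon}(z_k)}}v$, and two upper bounds cannot be combined to do that: even if the bias hypothesis controlled the ratio $(1-e^{-\beta d_2})/(1-e^{-\beta d_1})$ exactly as you hope, the chain of inequalities you would get still points the wrong way, because $v(x_{k-1})\le\varphi(x_{k-1})$ gives no floor on $v(x_{k-1})$. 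The estimate that actually closes the step is
\[
v(x_{k-1})\;\ge\;\frac{e^{2\beta\varepsilon}-e^{\beta\eta}}{e^{2\beta\varepsilon}-1}\,v(z_k)
\;+\;\frac{e^{\beta\eta}-1}{e^{2\beta\varepsilon}-1}\,m,
\qquad \eta=|x_{k-1}-z_k|\le\varepsilon,
\]
which is the two-point consequence of the \emph{lower} slope bound \eqref{bound_by_slope_negative} (valid since $d(z_k,Y)>2\varepsilon$ before $\tau$) and which, after optimizing over $\eta\le\varepsilon$, delivers the needed bound precisely when $\tfrac{1+\theta}{2}\le\tfrac{1}{1+e^{-\beta\varepsilon}}$. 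That estimate is a consequence of comparison with exponential cones from \emph{below}, not from above.

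In fact the deterministic inequality you reduce to is false under the stated CECA hypothesis: take $X=[-10,10]$, $Y=\{\pm 10\}$, $v(x)=|x|$ (which satisfies CECA on $X\setminus Y$ since it cannot be touched from above by a $C^2$ function at the origin and is a strict subsolution elsewhere), $x_{k-1}=0$ and $z_k\to\varepsilon$: then $m=0$ and the left side tends to $\tfrac{1+\theta}{2}\,\varepsilon>0=v(0)$. So no choice of player II's move can rescue the supermartingale property at that step, and the calibration of $\gamma$ cannot be the resolution. The hypotheses of this lemma should mirror those of the paper's subsequent Lemma on $v^\varepsilon\le v$ (comparison from below, and the reversed bias inequality $\tfrac{1}{1+e^{-\beta\varepsilon}}\ge\tfrac{1+\theta(\varepsilon)}{2}$); the paper's ``proof'' is simply a citation to \cite{peres2010biased}, and your argument inherits the mismatch rather than repairing it. If you rerun your analysis with the lower slope estimate \eqref{bound_by_slope_negative} applied at $y=z_k$, $r=2\varepsilon$, and the reversed bias inequality, the one-step bound closes cleanly and the triangle-inequality worry you flag disappears (the estimate is intrinsic and does not need the three points to be collinear).
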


\begin{proof}
    The proof is identical to \cite{peres2010biased} Lemma 5.1 in the general case.
\end{proof}

\begin{lemma}
    Suppose that $v : X \to \mathbb{R}$ is continuous, satisfies comparison with $\beta$-exponential
cones from below on $X \setminus Y$, and $v \ge F$ on $Y$.
Let $\theta(\varepsilon)$ satisfy
\[
\frac{1}{1 + e^{-\beta \varepsilon}} \;\ge\; \frac{1+\theta(\varepsilon)}{2}.
\]
Then $v^{\varepsilon} \le v$ for all $\varepsilon > 0$, where $v^{\varepsilon}$ is the value of the
II-favored $\varepsilon$-game for player~I.

\end{lemma}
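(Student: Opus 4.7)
The plan is to exhibit, for each $\delta>0$, a player II strategy $S_{II}$ in the II-favored $\varepsilon$-game under which $v$ (with a $\delta 2^{-k}$ correction) becomes a supermartingale, so that optional stopping combined with $v\ge F$ on $Y$ yields $F_{-}(S_I,S_{II})\le v(x_0)+\delta$ for every player I strategy $S_I$, giving $v^{\varepsilon}(x_0)\le v(x_0)$ after $\delta\downarrow 0$.

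Concretely, after player I picks $z_k\in B_\varepsilon(x_{k-1})$, player II plays as follows. If II wins the coin toss, he moves $x_k$ to a $(\delta 2^{-k-1})$-approximate minimizer of $v$ on $\overline{B_{2\varepsilon}(z_k)}$. If I wins (so II must choose $x_k\in (B_{2\varepsilon}(z_k)\cap Y)\cup\{z_k\}$), II terminates at some $y\in B_{2\varepsilon}(z_k)\cap Y$ with $F(y)\le v(z_k)+\delta 2^{-k-1}$ whenever such a $y$ exists, and otherwise sets $x_k=z_k$. With this strategy, the previous lemma already shows that $M_k:=v(x_{k\wedge\tau})+\delta 2^{-k}$ is a supermartingale up to $\tau:=\inf\{k:d(x_k,Y)<3\varepsilon\}$.

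The analytic heart of the proof is the single one-step inequality
\[
\frac{1}{1+e^{-\beta\varepsilon}}\,v(z_k)\;+\;\frac{e^{-\beta\varepsilon}}{1+e^{-\beta\varepsilon}}\min_{\overline{B_{2\varepsilon}(z_k)}} v\;\le\; v(x_{k-1}),
\]
which follows from the CECB property of $v$: by the CECB analogue of \cref{CECA_convex}, the function $g^{-}(r):=\min_{|w-z_k|\le r}v(w)$ is $\beta$-concave, and plugging $s=0$, $t=\varepsilon$, $r=2\varepsilon$ into the $\beta$-concavity inequality yields the bound after observing $v(x_{k-1})\ge g^{-}(\varepsilon)$ (since $x_{k-1}\in\overline{B_\varepsilon(z_k)}$). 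The coefficients $\tfrac{1}{1+e^{-\beta\varepsilon}}$ and $\tfrac{e^{-\beta\varepsilon}}{1+e^{-\beta\varepsilon}}$ are precisely the I/II winning probabilities under the canonical bias $\rho(\varepsilon)=e^{\beta\varepsilon}$; the hypothesis $\tfrac{1+\theta(\varepsilon)}{2}\le\tfrac{1}{1+e^{-\beta\varepsilon}}$ then gives $p_I\,v(z_k)+p_{II}\min v\le v(x_{k-1})$, since $v(z_k)\ge\min v$. The condition $F\le v$ on $Y$ ensures that the termination branch respects the same upper bound $v(z_k)+\delta 2^{-k-1}$, so the supermartingale property holds in both subcases of the coin toss.

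For the final step, consider any $S_I$ that forces termination a.s.\ (otherwise $F_{-}(S_I,\cdot)=-\infty$ and there is nothing to prove); let $\sigma$ be the termination time. A short geometric argument---once $d(x_{k-1},Y)<3\varepsilon$, II can steer into $Y$ within a bounded number of further steps, picking up only $O(\varepsilon)$ error in the supermartingale---extends the supermartingale up to $\sigma$, and then optional stopping together with Fatou's lemma (applied to $v(x_{k\wedge\sigma})-\inf_Y F\ge 0$ on $Y$) yields $\mathbb{E}[F(x_\sigma)]\le\mathbb{E}[v(x_\sigma)]\le v(x_0)+\delta+O(\varepsilon)$. The main obstacle is this final integrability step, since $v$ is not a priori globally bounded in the length-space setting; this is handled by noting that any termination-forcing $S_I$ keeps the trajectory in an effectively bounded region on which $v$ is bounded, and that the $\delta 2^{-k}$ correction absorbs the approximation errors from non-attained infima of $v$ on the closed balls.
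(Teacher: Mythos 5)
Your derivation of the one-step inequality is correct and captures the analytic heart of the lemma: plugging $s=0$, $t=\varepsilon$, $r=2\varepsilon$ into the $\beta$-concavity of $g^{-}(r)=\min_{\overline{B_r(z_k)}}v$ gives
$g^{-}(\varepsilon) \geq \frac{1}{1+e^{-\beta\varepsilon}}\,v(z_k) + \frac{e^{-\beta\varepsilon}}{1+e^{-\beta\varepsilon}}\min_{\overline{B_{2\varepsilon}(z_k)}} v$,
and since $\frac{1+\theta(\varepsilon)}{2}\le\frac{1}{1+e^{-\beta\varepsilon}}$ and $v(z_k)\ge\min_{\overline{B_{2\varepsilon}(z_k)}} v$, this yields the supermartingale bound for a single step, valid as long as $d(z_k,Y)>2\varepsilon$, i.e.\ up to $\tau=\inf\{k:d(x_k,Y)<3\varepsilon\}$. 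A small caveat: the preceding lemma in the paper is stated for CECA and the opposite inequality on $\theta(\varepsilon)$, so you cannot literally invoke it as "already showing" the supermartingale; fortunately your own computation supplies what is needed there.

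The concluding paragraph, however, contains genuine gaps. The claim that once $d(x_{k-1},Y)<3\varepsilon$ "II can steer into $Y$ within a bounded number of further steps" is not true: whenever $d(x_{k-1},Y)\ge\varepsilon$ player I can choose $z_k\in B_\varepsilon(x_{k-1})$ with $d(z_k,Y)\ge 2\varepsilon$, so $B_{2\varepsilon}(z_k)\cap Y=\emptyset$ and II cannot terminate; there is no deterministic bound, and precisely in this boundary regime the $\beta$-concavity estimate you relied on becomes unavailable, so the supermartingale property past $\tau$ must be re-derived by a separate argument combining $v\ge F$ on $Y$ with the cone comparison on balls overlapping $Y$ --- which you do not give. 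Declaring only an "$O(\varepsilon)$ error" is also not adequate: the lemma asserts the exact inequality $v^{\varepsilon}\le v$ for each fixed $\varepsilon>0$, and an additive $O(\varepsilon)$ cannot be dispatched at this stage (only the $\delta 2^{-k}$ corrections vanish as $\delta\downarrow 0$). Finally, the assertion that "any termination-forcing $S_I$ keeps the trajectory in an effectively bounded region" is false --- a strategy can force termination almost surely while still making arbitrarily long excursions with positive probability. The paper explicitly flags this non-compactness issue and resolves it not by localizing the trajectory but by supplying a uniform exponential-cone bound from $L^{\beta}_{g}(Y)<\infty$, namely $C(r)=L^{\beta}(1-e^{-\beta r})+e^{-\beta r}g(y)$ for $y\in Y$; your optional-stopping/Fatou step should run through such an a priori bound rather than through the unproved trajectory-localization claim.
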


\begin{proof}
    The proof is almost identical with \cite{peres2010biased} lemma 5.2, except we need another argument for comparison with exponential cones as the original proof relies on $X$ is compact. However this is easy with $L^{\beta} < \infty$: Let $a = L^{\beta}$ and $C(r):=a\left(1-e^{-\beta r}\right)+ e^{-\beta r} g(y)$. The reader can easily check that this gives the desired exponential cone.
\end{proof}

\bigskip

\begin{proof}[Proof of \cref{u_I_theorem} and \cref{main_general}]
    From above we have shown that if $u$ is an extension that satisfies CECA, then playing the favored $\epsilon$-games with $\theta_0(\epsilon)=\tanh (\beta \epsilon / 2)$,  $v^\epsilon \leq u$ for any $\epsilon>0$, hence we have $\tilde{u} \leq u$ given any such $u$. Thus $\tilde{u}$ is the smallest of such $u$. As we have shown $\tilde{u}$ itself satisfies CECA, this completes the proof.
\end{proof}

\section{Linear blow up}

\begin{proof}[Proof of \cref{linear-blow-up}]
By subtracting a constant we may consider $u(x^0) = 0$. 

Note we have shown $S_{\beta,u}^{+} (x) = S_{0,u}^{+} (x) + \beta u(x)$. When $u(x) = 0$, the $\beta$-slope is the same as the local Lipschitz constant. For this reason we shall omit the $\beta$ in the notation in the following, when there is no confusion.

We will assume that

$$
L_0^{\beta}:=S_u^{+}\left(x^0\right)=\mathrm{T}_u^{\beta}\left(x_0\right)>0
$$
as the case $L_0^{\beta} = 0$ follows easily by the upper-semicontinuity of $L^{\beta}$.

If $u \in \mathrm{CEC}(U), x^0 \in U, B_{r_0}\left(x^0\right) \subset \subset U$ and $\tilde{u}(x)=\left(u\left(r_0 x+x^0\right) \right) /\left(r_0 L_0\right)$, then $\tilde{u} \in \operatorname{CEC}\left(B_1(0)\right), \tilde{u}(0)=0$ and $S_{\tilde{u}}^{+}(0)=S_u^{+}\left(x^0\right) / L_0=1$. Hereafter, we simply assume that

$$
u \in \mathrm{CEC}\left(B_1(0)\right), \quad u(0)=0, \quad S_u^{+}(0)=1
$$

Given above, for $\lambda>0$ the function

$$
v_\lambda(x):=\frac{u(\lambda x)}{\lambda}
$$

satisfies $v_\lambda \in \operatorname{CEC}\left(B_{1 / \lambda}(0)\right), v_\lambda(0)=0$ and for $r<1 / \lambda$,

$$
L_{v_\lambda}\left(B_r(0)\right)=L_u\left(B_{\lambda r}(0)\right), \quad \max _{|w|=r} v_\lambda(w)=\frac{\max _{|w|=\lambda r} u(w)}{\lambda}=r S_u^{+}(0, \lambda r)
$$

Thus the family $v_\lambda$ is uniformly bounded and equicontinuous in each ball $B_r(0)$ as $\lambda \downarrow 0$. Therefore there exists a sequence $\lambda_j \downarrow 0$ and $v \in C\left(\mathbb{R}^n\right)$ such that $v_{\lambda_j} \rightarrow v$ uniformly on every bounded set. Clearly $v \in \mathrm{CEC}\left(\mathbb{R}^n\right)$ (comparison with exponential cones is obviously preserved under uniform convergence). Putting $\lambda=\lambda_j$ in the relations above and passing to the limit then yields the first two claims below:

$$
L_v\left(B_r(0)\right) \leq \mathrm{T}_u(0)=1, \quad \max _{|w|=r} v(w)=r, \quad \min _{|w|=r} v(w)=-r
$$

By the above property, the result follows the same way as in \cite{aronsson2004tour}.
\end{proof}

\subsection*{Acknowledgement}
This work is supported by NSF-DMS grants 2153359 and 2450726. The author would like to thank Alan Hammond for introducing the topic on tug-of-war to the author, encouragement and generousness sponsorship during this work.

\bibliographystyle{plain}
\bibliography{bibliography}

\end{document}